\newcommand{\ee}{\end{equation}}
\newcommand{\eea}{\end{eqnarray}}
\newcommand{\bean}{\begin{eqnarray*}}
\newcommand{\eean}{\end{eqnarray*}}
 \newif\ifpctex
 \newcommand{\eps}{\epsilon}
\newcommand{\smallx}{\mathpzc{x}}
\DeclareMathAlphabet{\mathpzc}{OT1}{pzc}{m}{it}
  \newtheorem{lemma}{Lemma}[section]
  \newtheorem{theorem}{Theorem}
  \newtheorem{proposition}[lemma]{Proposition}
  \newtheorem{cor}[lemma]{Corollary}
  \newtheorem{corollary}[lemma]{Corollary}
  \newtheorem{cond}[lemma]{Condition}
  \theoremstyle{definition}
  \newtheorem{definition}[lemma]{Definition}
  \newcommand{\beCond}[2]{\Rand{\vspace{0,6cm}\tt #1}\begin{cond}[#2]
  \label{#1}} \theoremstyle{definition}
  \numberwithin{equation}{section}
  \newtheoremstyle{step}{3pt}{0pt}{\itshape}{}{\bf}{}{.5em}{}
\theoremstyle{step} \newtheorem{step}{Step}
\newcommand{\CD}{\mathcal{D}} 
\newcommand{\R}{\mathbb{R}} 
\newcommand{\N}{\mathbb{N}} \newcommand{\M}{\mathbb{M}}
\newcommand{\Rand}[1]{\marginpar{#1}} \marginparwidth1.5cm
\newcommand{\be}[1]{\begin{equation}\label{#1}}
\newcommand{\bew}[1]{\Rand{\vspace{0,6cm}\tt
#1}\begin{equation*}\label{#1}}
\newcommand{\bea}[1]{\Rand{\vspace{0,6cm}\tt
#1}\begin{eqnarray}\label{#1}}
\newcommand{\beL}[2]{\Rand{\vspace{0,6cm}\tt
#1}\begin{lemma}[#2]\label{#1}}
\newcommand{\beD}[2]{\Rand{\vspace{0,6cm}\tt
#1}\begin{definition}[#2]\label{#1}}
\newcommand{\beT}[2]{\Rand{\vspace{0,6cm}\tt
#1}\begin{theorem}[#2]\label{#1}}
\newcommand{\beP}[2]{\Rand{\vspace{0,6cm}\tt
#1}\begin{proposition}[#2]\label{#1}}
\newcommand{\beC}[2]{\Rand{\vspace{0,6cm}\tt #1}\begin{cor}[#2]\label{#1}}
\newcommand{\Tto}{{_{\displaystyle\Longrightarrow\atop t\to\infty}}}
\newcommand{\Tno}{{_{\displaystyle\Longrightarrow\atop n\to\infty}}}
\newcommand{\tno}{{_{\displaystyle\longrightarrow\atop n\to\infty}}}
\newcommand{\tNo}{{_{\displaystyle\longrightarrow\atop N\to\infty}}}
\definecolor{usablegreen}{rgb}{0,.5,0}
\definecolor{usablecyan}{rgb}{0,.5,.5}
\newcommand{\newatop}[2]{\underset{\scriptscriptstyle #2}{#1}}	\newcommand{\convto}[2][\infty]{\,\newatop{\longrightarrow}{#2 \to #1}\,}	\newcommand{\Convto}[2][\infty]{\,\newatop{\Longrightarrow}{#2 \to #1}\,}
\renewcommand{\tno}{\convto{n}}
\renewcommand{\Tno}{\Convto{n}}
\renewcommand{\tNo}{\convto{N}}
\renewcommand{\Tto}{\Convto{t}}
\newcommand{\comment}[1]{}
\newcommand{\pstep}[1]{\removelastskip\smallskip\par\noindent\emph{#1.} \hspace{0.25ex}}
\newcommand{\nbd}{\protect\nobreakdash-\hspace{0pt}}
\newcommand{\nclad}[1][m]{$#1$\nbd labelled cladogram}
\newcommand{\nclads}[1][m]{\nclad[#1]s}
\DeclareMathOperator{\Dir}{Dir}
\newcommand{\Dirhalf}{\Dir(\tfrac12,\ldots,\tfrac12)}
\newcommand{\Exp}{\mathbb{E}}
\newcommand{\one}{\mathds{1}}
\newcommand{\testtree}{\mathfrak{t}}
\newcommand{\Etk}{E_{\testtree,k}(\uu)}
\newcommand{\tk}{\testtree_{\land k}}
\newcommand{\uk}{{\uu_{\land k}}}
\newcommand{\uu}{\underline{u}}
\newcommand{\ux}{\underline{x}}
\newcommand{\X}{\mathpzc{X}}
\newcommand{\Clad}[1][m]{\mathfrak{C}_{#1}}
\newcommand{\shape}[1][T]{\mathfrak{s}_{#1}}
\newcommand{\sPol}{\Pi_{\shape[]}}
\renewcommand{\(}{\bigl(} 	\renewcommand{\)}{\bigr)}
\newcommand{\T}{\mathbb{T}}
\newcommand{\Tbin}{\T_{2}}
\newcommand{\mut}{\tilde\mu}
\newcommand{\Tb}{\overline{T}}
\newcommand{\cb}{\bar{c}}
\DeclareMathOperator{\lf}{lf}
\DeclareMathOperator{\br}{br}
\DeclareMathOperator{\edge}{edge}
\DeclareMathOperator{\intedge}{in-edge}
\DeclareMathOperator{\extedge}{ex-edge}
\DeclareMathOperator{\at}{at}
\newcommand{\restricted}[1]{{\mspace{-1mu}\upharpoonright}_{#1}}
\newcommand{\set}[2]{\{#1: #2\}}
\newcommand{\bset}[2]{\bigl\{#1 \,:\, #2\bigr\}}
\newcommand{\Bset}[2]{\Bigl\{#1 \>:\> #2\Bigr\}}
\newcommand{\openint}[2]{\mathopen]#1, #2\mathclose[}
\newcommand{\Rtree}{$\R$\nobreakdash-tree}
\newcommand{\Sub}{\mathcal{S}}
\newcommand{\dx}{\mathrm{d}}
\newcommand{\C}{\mathcal{C}}
\newenvironment{remark}
  {\pushQED{\qed}\remarkx}
  {\popQED\endremarkx}
\newenvironment{example}
  {\pushQED{\qed}\examplex}
  {\popQED\endexamplex}
\newenvironment{enremark}[1][]{\begin{remark}[#1]\begin{enumerate}[1.]}{\qedhere\end{enumerate}\end{remark}}
\newcommand{\node}{\bullet}
\newcommand{\xyedge}{\ar@{-}}
\newcommand{\leaf}{{}\phantom{\bullet}}
\newcommand{\Bnode}{\circ}
\newcommand{\Rnode}{\times}
\newcommand{\lfdr}{\leaf\ar@{-}[dr]}
\newcommand{\lfd}{\leaf\ar@{-}[d]}
\newcommand{\lfur}{\leaf\ar@{-}[ur]}
\newcommand{\lful}{\leaf\ar@{-}[ul]}
\newcommand{\lfdl}{\leaf\ar@{-}[dl]}
\newcommand{\brd}{\Bnode\ar@{-}[d]}
\newcommand{\brrr}{\node\ar@{-}[rrr]}
\newcommand{\brr}{\Bnode\ar@{-}[r]}
\newcommand{\brl}{\Rnode\ar@{-}[l]}
\newcommand{\brdr}{\Bnode\ar@{-}[dr]}
\newcommand{\brdl}{\node\ar@{-}[dl]}
\newcommand{\brul}{\Rnode\ar@{-}[ul]}
\newcommand{\Rbrdl}{\Rnode\ar@{-}[dl]}
\newcommand{\colsepdefault}{0.75pc}
\newcommand{\cfigure}[3]{
	\begin{figure}
	\centerline{
	 #2 
	 }
	\caption{#3}
	\label{#1}
	\end{figure}
}
\newcommand{\xymatfig}[3]{\cfigure{#1}{$\xymatrix{#2}$}{#3}}
\newcommand{\xymatdoublefig}[5][\colsepdefault]{\cfigure{#2}{\xymatrixcolsep{#1}
	(a) $\,\xymatrix{#3}$ \hfil (b) $\,\xymatrix{#4}$}{#5}}
\renewcommand\p@enumii{}
\renewcommand{\shape}[1][(T,c)]{\mathfrak{s}_{#1}}
\newcommand{\AldGen}{\Omega_\mathrm{Ald}}	\newcommand{\AldGenMass}{\Omega_\mathrm{Ald}}							
\newcommand{\Taldous}{\mathbb{T}_2^{\mathrm{cont}}}	\newcommand{\mlClad}[1][m]{\overline{\mathfrak{C}}_{#1}}	\DeclareMathOperator{\linspan}{span}
\newcommand{\ueta}{\underline\eta}
\newcommand{\uetaz}{\ueta^z}
\newcommand{\tke}{\testtree^{(k,e)}}
\title[The Aldous chain on cladograms in the diffusion limit]{Aldous chain on cladograms in the diffusion limit}
\date{\today}
\keywords{tree-valued Markov chain, tree-valued diffusion, algebraic trees, sample shape convergence, Gromov-weak convergence,
Wright-Fisher diffusion, martingale problem, continuum tree}
\subjclass[2010]{Primary: 60B99; Secondary: 60G99, 60J05, 60J25, 	  60J60, 60J80}		
\author{Wolfgang L\"ohr}
\address{Wolfgang L\"ohr\\
Universit\"at Duisburg-Essen\\ Fakult\"at f\"ur Mathematik \\ 45117 Essen \\ Germany \\
and
 TU Chemnitz\\ Fakult\"at f\"ur Mathematik \\ 09107 Chemnitz \\ Germany}
\email{wolfgang.loehr@uni-due.de}
\author{Leonid Mytnik}
\address{Leonid Mytnik\\
William Davidson Faculty of Industrial Engineering and Management\\
Israel Institute of Technology Technion\\
Haifa 32000\\ Israel}
\email{leonid@ie.technion.ac.il}
\author{Anita Winter}
\address{Anita Winter\\
Universit\"at Duisburg-Essen\\ Fakult\"at f\"ur Mathematik \\ 45117 Essen \\ Germany}
\email{anita.winter@uni-due.de}
\thanks{Research supported by DFG RTG 2131, DFG SPP 1590, Israel Science Foundation (grants No.\ 1325/14,
1704/18), Aly Kaufman fellowship for Anita Winter and visiting fellowship of the Technion for Wolfgang L\"ohr.
Wolfgang L\"ohr was partially supported by the DFG project 415705084.}
\begin{document}

\begin{abstract}  In \cite{MR1774749}, Aldous investigates a symmetric Markov chain
  on cladograms and gives bounds on its mixing and relaxation times. The latter bound was sharpened in \cite{MR1871950}.
  In the present paper we encode cladograms as binary, algebraic measure trees and show that this Markov chain
  on cladograms with a fixed number of leaves converges in distribution as the number of leaves tends to infinity.
  We give a rigorous construction of the limit as the solution of a well-posed
  martingale problem. The existence of a continuum limit diffusion was
  conjectured by Aldous, and
  we therefore refer to it as Aldous diffusion.  We show that the Aldous diffusion is a Feller process with continuous paths, and the
  algebraic measure Brownian CRT is its unique invariant distribution.

  Furthermore, we consider the vector of the masses of the three subtrees connected to a sampled branch point.
  In the Brownian CRT, its annealed law is known to be the Dirichlet distribution.
  Here, we give an explicit expression for the infinitesimal evolution of its quenched law under the Aldous
  diffusion.
\end{abstract}

\maketitle

\vspace{-\baselineskip}\begin{quote} {\footnotesize \tableofcontents} \end{quote}

 \section{Introduction}
\label{S:intro}
An \emph{$N$-cladogram} is a semi-labelled, unrooted, binary tree with $N\ge 2$ \emph{leaves} labelled
$\{1,2,...,N\}$ and with $N-2$ unlabelled internal vertices. 
Cladograms are particular phylogenetic trees for which no information on the edge lengths is available, and
which therefore only capture the tree structure. Reconstructing cladograms from DNA data is of major interest in
population genetics. An important ingredient for several algorithms are Markov chains that move through a space
of finite trees (see, for example, \cite{Felsenstein2003} for a survey on Markov chain Monte Carlo algorithms in
maximum likelihood tree reconstruction).
Usually, such chains are based on a set of simple rearrangements that transform a tree into a ``neighboring''
tree (see, for example, \cite{Felsenstein2003, MR1920237, MR1867931, MR1841949}).

The present paper considers (a continuous-time version of) the \emph{Aldous chain} on cladograms, which
is a Markov chain on the space $\Clad[N]$ of all $N$-cladograms. It has the following transition rates: for each
pair $(u,e)$ consisting of a leaf and an edge, at rate $1$, the
Markov chain jumps from its current state $\mathfrak{t}$ to $\mathfrak{t}^{(u,e)}$, where that latter is
obtained as follows (see Figures~\ref{Fig:01} and \ref{Fig:02}). If $u$ is not incident to $e$, then
\begin{itemize}
\item
Erase the unique edge (including the incident vertices) incident to $u$,
\item
split the remaining subtree at the edge $e$ into two pieces, and
\item
reintroduce the above edge (including $u$ and the branch point) at the split point.
\end{itemize}
Otherwise, if $u$ is incident to $e$, we set $\testtree^{(u,e)}=\testtree$. In total, these so-called
\emph{Aldous moves} from $\testtree$ to $\testtree^{(u,e)}$ happen at rate $N(2N-3)$, and the rate of actual
jumps of the Markov chain (where $\testtree^{(u,e)}\ne\testtree$) is $N(2N-6)$.

\begin{figure}
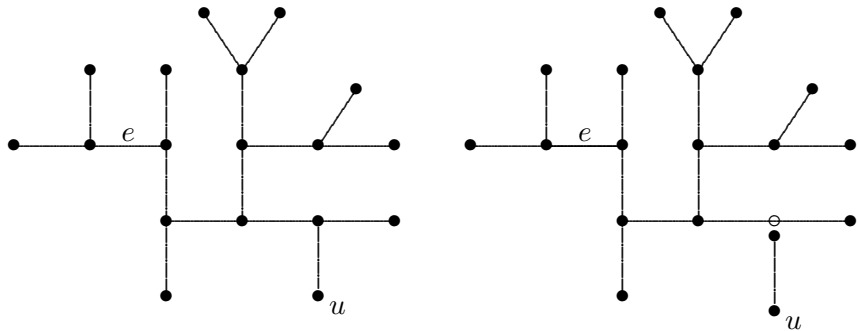

\centerline{
\beginpicture
	\setcoordinatesystem units <.5cm,.5cm>
	\setplotarea x from -0.5 to 22, y from -3.5 to 6
	\plot 4 0 10 0 /
	\plot 8 0 8 -1.78 /
	\plot 4 0 4 4 /
	\plot 2 2 0 2 /
	\plot 2 2 4 2 /
	\plot 2 2 2 4 /
	\plot 6 0 6 4 /
	\plot 6 2 10 2 /
	\plot 6 4 7 5.5 /
	\plot 8 2 9 3.5 /
	\plot 6 4 5 5.5 /
	\plot 4 0 4 -2 /
	\put{$\bullet$} [lC] at 3.8 -2
	\put{$\bullet$} [lC] at 4.8 5.5
	\put{$\bullet$} [lC] at 7.8 2
	\put{$\bullet$} [lC] at 9.8 2
	\put{$\bullet$} [lC] at 8.8 3.5
	\put{$\bullet$} [lC] at 6.8 5.5
	\put{$\bullet$} [lC] at 5.8 2
	\put{$\bullet$} [lC] at 5.8 4
			\put{$\bullet$} [lC] at 1.8 4
	\put{$\bullet$} [lC] at 1.8 2
	\put{$\bullet$} [lC] at -.2 2
	\put{$\bullet$} [lC] at 3.8 2
	\put{$\bullet$} [lC] at 3.8 4
	\put{$\bullet$} [lC] at 3.8 0
	\put{$\bullet$} [lC] at 5.8 0
	\put{$\bullet$} [lC] at 7.8 0
	\put{$\bullet$} [lC] at 7.8 -2
	\put{$u$} [IC] at 8.5 -2.3
	\put{$\bullet$} [lC] at 9.8 0
	\put{$e$} [IC] at 3 2.3
		\plot 16 0 22 0 /
	\plot 20 -.4 20 -2.18 /
	\plot 16 0 16 4 /
	\plot 16 2 14 2 /
	\plot 12 2 16 2 /
	\plot 14 2 14 4 /
	\plot 18 0 18 4 /
	\plot 18 2 22 2 /
	\plot 18 4 19 5.5 /
	\plot 20 2 21 3.5 /
	\plot 18 4 17 5.5 /
	\plot 16 0 16 -2 /
	\put{$\bullet$} [lC] at 15.8 -2
	\put{$\bullet$} [lC] at 16.8 5.5
	\put{$\bullet$} [lC] at 19.8 2
	\put{$\bullet$} [lC] at 21.8 2
	\put{$\bullet$} [lC] at 20.8 3.5
	\put{$\bullet$} [lC] at 18.8 5.5
	\put{$\bullet$} [lC] at 17.8 2
	\put{$\bullet$} [lC] at 17.8 4
			\put{$\bullet$} [lC] at 13.8 4
	\put{$\bullet$} [lC] at 13.8 2
	\put{$\bullet$} [lC] at 11.8 2
	\put{$\bullet$} [lC] at 15.8 2
	\put{$\bullet$} [lC] at 15.8 4
	\put{$\bullet$} [lC] at 15.8 0
	\put{$\bullet$} [lC] at 17.8 0
	\put{$\circ$} [lC] at 19.8 0
	\put{$\bullet$} [lC] at 19.8 -0.4
	\put{$\bullet$} [lC] at 19.8 -2.4
	\put{$u$} [IC] at 20.5 -2.7
	\put{$\bullet$} [lC] at 21.8 0
	\put{$e$} [IC] at 15 2.3
\endpicture
}
\caption{At rate $N(2N-3)$, a) a leaf $u$ and an edge $e$ are picked at random, and if $e$ and $u$ are not adjacent,
b) the edge incident to $u$ is taken away, leaving behind a branch point of degree $2$.
(continued in Figure~\ref{Fig:02})}
\label{Fig:01}
\end{figure}

This Markov chain has the generator $\Omega_N$, acting on all functions $\phi\colon \Clad[N]\to\R$ as follows:
\begin{equation}
\label{e:genN}
   \Omega_N\phi(\mathfrak{t})
   =
   \sum_{(u,e)}\Bigl(\phi\(\mathfrak{t}^{(u,e)}\)-\phi(\mathfrak{t})\Bigr),
\end{equation}
where the sum runs over all pairs $(u,e)$ consisting of a leaf and an edge, and $\testtree\in \Clad[N]$.
Obviously, the Aldous chain is reversible, and the uniform distribution on $\Clad[N]$ is the stationary distribution.
It was shown in \cite{MR1774749} that both mixing and relaxation time of the discrete-time chain are of order at
least ${\mathcal O}(N^2)$, but at most of order ${\mathcal O}(N^3)$. \cite{MR1871950} verified that the
relaxation time is of order ${\mathcal O}(N^2)$. Therefore, our continuous-time version has relaxation time of
order 1.

As \cite{Aldous1993} shows that a random $N$-cladogram with uniform edge lengths $\tfrac{1}{\sqrt{N}}$ converges
weakly to the Brownian Continuum Random Tree (CRT), Aldous conjectured the existence of a CRT-symmetric
diffusion limit of the Aldous chain on $N$-cladograms observed at time scale of order ${\mathcal O}(N^2)$ as
$N\to\infty$. This conjecture was presented in a talk in March 1999 given at the Fields Institute, and is supported by the following calculation: suppose we start the Markov chain in some
initial $N$-cladogram, fix a branch point, and consider the relative sizes $(\eta_1,\eta_2,\eta_3)$ of the
three subtrees attached to this branch point. Then, as the Markov chain runs, these proportions
change as a certain Markov chain, until the branch point disappears. On the proposed time-rescaling of $N^2$,
the $N \to \infty$ limit is the diffusion with generator
\begin{equation}
\label{e:FVnegative}
   \Omega f(\underline{\eta}) = \sum_{1\le i,j\le 3}\eta_i(\delta_{i,j}-\eta_i)\partial^2_{i,j}f(\underline{\eta})
   					-\tfrac12 \sum_{i=1}^3\(1-3\eta_i\)\partial_if(\underline{\eta})
\end{equation}
which records certain aspects of a diffusion on the continuum tree. Aldous raised the question of
how this diffusion should be constructed rigourously and what more can we calculate from there? On Aldous open problem website
the construction was rated as straightforward provided the right set-up is chosen.

\begin{figure}
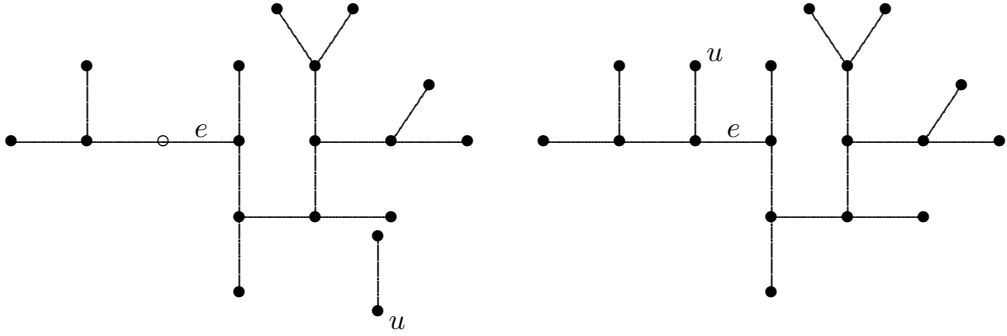

\centerline{
\beginpicture
	\setcoordinatesystem units <.5cm,.5cm>
	\setplotarea x from -2.5 to 12, y from -3.5 to 6
	\plot 4 0 8 0 /
	\plot 7.65 -2.5 7.65 -0.5 /
	\plot 4 0 4 4 /
	\plot 2 2 -2 2 /
	\plot 2 2 4 2 /
	\plot 0 2 0 4 /
	\plot 6 0 6 4 /
	\plot 6 2 10 2 /
	\plot 6 4 7 5.5 /
	\plot 8 2 9 3.5 /
	\plot 6 4 5 5.5 /
	\plot 4 0 4 -2 /
	\put{$\bullet$} [lC] at 3.8 -2
	\put{$\bullet$} [lC] at 4.8 5.5
	\put{$\bullet$} [lC] at 7.8 2
	\put{$\bullet$} [lC] at 9.8 2
	\put{$\bullet$} [lC] at 8.8 3.5
	\put{$\bullet$} [lC] at 6.8 5.5
	\put{$\bullet$} [lC] at 5.8 2
	\put{$\bullet$} [lC] at 5.8 4
		\put{$\bullet$} [lC] at -.2 4
	\put{$\circ$} [lC] at 1.8 2
	\put{$\bullet$} [lC] at -.2 2
	\put{$\bullet$} [lC] at -2.2 2
	\put{$\bullet$} [lC] at 3.8 2
	\put{$\bullet$} [lC] at 3.8 4
	\put{$\bullet$} [lC] at 3.8 0
	\put{$\bullet$} [lC] at 5.8 0
	\put{$\bullet$} [lC] at 7.8 0
	\put{$\bullet$} [lC] at 7.45 -0.5
	\put{$\bullet$} [lC] at 7.45 -2.5
	\put{$e$} [IC] at 3 2.3
	\put{{$u$}} [IC] at 8.15 -2.8
		\plot 18 0 22 0 /
	\plot 18 0 18 4 /
	\plot 16 2 12 2 /
	\plot 16 2 18 2 /
	\plot 14 2 14 4 /
	\plot 20 0 20 4 /
	\plot 20 2 24 2 /
	\plot 20 4 21 5.5 /
	\plot 22 2 23 3.5 /
	\plot 20 4 19 5.5 /
	\plot 18 0 18 -2 /
	\plot 16 2 16 4 /
	\put{$\bullet$} [lC] at 15.8 4
	\put{$u$} [IC] at 16.5 4.3
	\put{$\bullet$} [lC] at 17.8 -2
	\put{$\bullet$} [lC] at 18.8 5.5
	\put{$\bullet$} [lC] at 21.8 2
	\put{$\bullet$} [lC] at 23.8 2
	\put{$\bullet$} [lC] at 22.8 3.5
	\put{$\bullet$} [lC] at 20.8 5.5
	\put{$\bullet$} [lC] at 19.8 2
	\put{$\bullet$} [lC] at 19.8 4
		\put{$\bullet$} [lC] at 13.8 4
	\put{$\bullet$} [lC] at 15.8 2
	\put{$\bullet$} [lC] at 13.8 2
	\put{$\bullet$} [lC] at 11.8 2
	\put{$\bullet$} [lC] at 17.8 2
	\put{$\bullet$} [lC] at 17.8 4
	\put{$\bullet$} [lC] at 17.8 0
	\put{$\bullet$} [lC] at 19.8 0
	\put{$\bullet$} [lC] at 21.8 0
	\put{$e$} [IC] at 17 2.3
\endpicture
}
\caption{c) The two edges containing the branch point of degree $2$ are identified, the edge $e$ is
opened, and d) the free edge is reattached there.}
\label{Fig:02}
\end{figure}

\smallskip
The present paper is demonstrating that indeed a straightforward construction can be given once we choose the right
state space. A classical starting point would be to think of continuum trees as real trees which are particular
metric spaces. A metric space is called a real tree if it is path connected and satisfies the so-called
$4$-point condition. For convergence results one would like to be in a position to treat the approximating
discrete trees and their path-connected scaling limits in a unified way. One therefore also considers
\emph{metric trees} (introduced in \cite{AthreyaLohrWinter2017}), which are metric spaces differing from a real
tree by not necessarily being path connected. A metric space is a metric tree if it can be embedded
isometrically into a real tree in such a way that for every choice of three points in the metric tree, the
corresponding branch point (defined in the real tree) belongs to the metric tree.

In many applications it is useful to have metric trees equipped with a probability measure as, for example, the definition of the discrete Aldous chain dynamics requires to sample leaves according to some probability measure.
One therefore considers the space $\mathbb{M}$ of isometry classes of \emph{metric measure spaces} and equips it with
the Gromov-weak topology. In fact, Aldous's CRT arises as the Gromov-weak scaling limit of uniformly chosen
$N$-cladograms with the uniform distribution on the leaves and edge lengths scaled down by the factor $\tfrac{1}{\sqrt{N}}$.

One of the equivalent definitions of the \emph{Gromov-weak topology} is by convergence of the distance
matrix distributions, i.e., a sequence $(\smallx_N)_{N\in\N}$ of metric measure spaces converges to a metric
measure space $\smallx\in\M$ if and only if $\Phi(\smallx_N)\tNo\Phi(\smallx)$ for all test
functions $\Phi\colon \M \to \R$ of the form
\begin{equation}
\label{e:polyn}
   \Phi(\smallx):=\int\phi\((r(u_i,u_j))_{1\le i<j\le m}\)\,\mu^{\otimes m}(\mathrm{d}\uu),
\end{equation}
where $\smallx=(X,r,\mu)$, $m\in\N$, and $\phi\in{\mathcal C}_b(\R_+^{\binom m2})$ (see \cite{GrevenPfaffelhuberWinter2009,Loehr2013}).

In this set-up, many tree-valued Markov processes have been constructed and in some cases also the convergence
of approximating discrete tree-valued dynamics has been established (see, for example,
\cite{EvansWinter2006,GrevenPfaffelhuberWinter2013,DepperschmidtGrevenPfaffelhuber2012,KliemLoehr2015,LoehrVoisinWinter2015}). One could think that metric (measure) trees are
the natural framework for rescaling the Aldous chain as well. However, the Aldous chain resists this
approach. An easy calculation shows that the quadratic variation of the averaged distance process rescales at
time scale $N^{\frac32}$. But how does it relate to the conjecture that the Aldous chain rescales on the time
scale $N^2$? One reason might be that distances behave too wildly for tightness on that time scale to hold. Which in turn might be a hint that the naively used graph distance is not
the notion of distance intrinsic to the Aldous chain dynamics. And indeed, one can argue that two points are
close if the mass branching off the line segment connecting them is small rather than if the length of that line segment is small. The idea for our new state space is to overcome the metric issue by focusing on the
tree structure only.

In what follows, we refer to $(T,c)$ as an \emph{algebraic tree} if $T\not=\emptyset$ is a
set equipped with a \emph{branch point map} $c\colon T^3\to T$ satisfying consistency conditions (see Definition~\ref{Def:algtree}).
Even though algebraic trees can be seen as metric trees where one has ``forgotten'' the metric (i.e.,
equivalence classes of metric trees), the branch point map is defined such that the notions of leaves, branch
points, degree, subtrees, line segments, etc.\ can be formalized without reference to a metric (and
agree with the corresponding notions in the metric tree).
The Aldous diffusion takes values in the new state space $\mathbb{T}$ of (equivalence classes of)
\emph{algebraic measure trees} introduced in \cite{LoehrWinter} (see Section~\ref{S:algebraic} for algebraic
trees as topological spaces and for equivalence classes of algebraic measure trees).
An algebraic measure tree $(T,c,\mu)$ consists of an
algebraic tree $(T,c)$ satisfying a separability condition, together with a probability measure $\mu$ on it
(see Definition~\ref{Def:amt}).
For a notion of convergence in $\mathbb{T}$, we first introduce the \emph{branch point distribution} on $T$,
\begin{equation}\label{e:nu}
    \nu_{(T,c,\mu)}:=\mu^{\otimes 3} \circ c^{-1},
\end{equation}
and then associate an algebraic measure tree $\smallx=(T,c,\mu)\in \T$ with the metric measure tree
$(T,r_{\mu},\mu)\in \M$. To this end, define the pseudometric
\begin{equation}
\label{e:metric}
  r_\mu(x,y):=\nu_\smallx\([x,y]\)-\tfrac{1}{2}\nu_\smallx\(\{x\}\)-\tfrac{1}{2}\nu_\smallx\(\{y\}\),
\end{equation}
where $x,y\in T$, and $[x,y]$ is the interval (``line segment'') from $x$ to $y$.
We define convergence of the algebraic measure trees in $\T$ as Gromov-weak convergence of these associated
metric measure trees, i.e., we say
\begin{equation}
\label{e:0010}
   (T_N,c_N,\mu_N)_{N\in\N}\mbox{ converges in $\mathbb{T}$\hspace{.5cm} iff\hspace{.5cm} } (T_N,r_{\mu_N},\mu_N)_{N\in\N}\mbox{ converges in $\mathbb{M}$}.
\end{equation}
The space $\mathbb{T}$ equipped with the so-called \emph{branch point distribution distance} Gromov-weak topology
(or, for short, bpdd-Gromov-weak topology) is introduced and further studied in \cite{LoehrWinter}.
Because cladograms are by definition binary, it is for the purpose of the present paper enough to consider the
subspace of $\T$ consisting of binary trees. More precisely, we consider the subspaces
\begin{equation}
\label{e:T2}
   \mathbb{T}_2:=\big\{(T,c,\mu)\in\mathbb{T}:\,\mbox{degrees at most $3$, atoms of $\mu$ only at leaves}\big\}
\end{equation}
of binary trees with no atoms on the skeleton, and
\begin{equation}
\label{e:Tcont}
   \mathbb{T}^{\mathrm{cont}}_2:=\big\{(T,c,\mu)\in\Tbin:\,\mbox{$\mu$ non-atomic}\big\}
\end{equation}
of continuum binary trees. It is shown in \cite[Theorem~3]{LoehrWinter} that both $\mathbb{T}_2$ and
$\mathbb{T}_2^{\mathrm{cont}}$ are compact, which is very convenient for showing tightness of the approximating
processes. Furthermore, on $\Tbin$, we have equivalent formulations of bpdd-Gromov-weak convergence which we
can use to prove our limit statements (see Section~\ref{S:algebraic} for more details).

 Let $\Clad$ denotes the set of $m$-cladograms (see (\ref{e:CCm})). For an algebraic tree $(T,c)$ and
 $\uu=(u_1,\ldots,u_m) \in T^m$, let $\shape(\uu)$ denote the
 $m$\nbd cladogram generated by the points $u_1,...,u_m$ in $(T,c)$ (see Definition~\ref{Def:shapeclado} for a precise definition).
For $m\in\N$ and
$\testtree\in\Clad$, let $\Phi^{m,\mathfrak{t}}$ be the function which sends an algebraic measure tree to the probability that $m$ points sampled independently with $\mu$ generate
the cladogram $\testtree$, i.e.,
\begin{equation}
\label{e:set1}
  \Phi^{m,\mathfrak{t}}(T,c,\mu):=\mu^{\otimes m}\bigl(\shape^{-1}(\mathfrak{t})\bigr),
\end{equation}
where $(T,c,\mu)\in \Tbin$. We refer to $\mu^{\otimes m}\circ \shape^{-1}$ as $m$-sample shape distribution, and
to functions in the linear span of functions of the form \eqref{e:set1} as \emph{shape polynomials}.
One of the main results of \cite{LoehrWinter} is
that $\Phi^{m,\testtree} \in \C_b(\Tbin)$, and moreover, the set of shape polynomials is convergence determining
for measures on $\Taldous$. Therefore, it is a convenient set of test functions.

We characterize the Aldous diffusion analytically as the unique solution of a martingale problem. We use the
following terminology (see Sections~4.3 and 4.4 of \cite{EthierKurtz86}). Let $E$ be a polish space, $B(E)$ be the set of
bounded, measurable, real-valued functions on $E$.

\begin{definition}[Martingale problem]\label{d:mp}
	Let $A \colon \CD(A) \to B(E)$ with $\CD(A)\subseteq B(E)$ be a linear operator, and $P$ a probability measure on $E$.
\begin{enumerate}
	\item
	A \emph{solution of} the $(A,\CD(A),P)$-martingale problem is an $E$-valued, measurable stochastic process $X=(X_t)_{t\ge0}$
		such that $P$ is the law of $X_0$ and, for all\/ $\Phi\in\CD(A)$, the
		process\/ $M:=(M_t)_{t\ge 0}$ given by
		\begin{equation}
		\label{e:martingales}
		   M_t := \Phi(X_t) - \int^t_0 A\Phi(X_s) \,\mathrm{d}s
		\end{equation}
		is a martingale (w.r.t.\ the natural, augmented filtration of $X$).
	\item The $(A, \CD(A), P)$-martingale problem is \emph{well-posed} if there exists a unique (in finite
		dimensional distribution) solution of it.
\end{enumerate}
\end{definition}

If the $(A,\CD(A),P)$-martingale problem is well-posed for every probability measure $P$ on $E$, the solution
$X$ is necessarily a Markov process by \cite[Theorem~4.4.2]{EthierKurtz86}.
We sometimes call the operator $A$ \emph{pre-generator} of $X$, because it is the restriction of the full
generator to $\CD(A)$.
As pre-generator of the Aldous diffusion, we introduce the operator $\AldGen$ acting on functions
of the form \eqref{e:set1} as follows:
\begin{equation}\label{e:set3}
   \AldGen\Phi^{m,\mathfrak{t}}(T,c,\mu)
   :=
   \int\Omega_m\one_{\mathfrak{t}}\(\shape(\uu)\) \,\mu^{\otimes m}(\dx \uu).
\end{equation}
Obviously, $\AldGen$ can be extended linearly to the set of shape polynomials, i.e.\ to
\begin{equation}\label{25_1}
	\CD(\AldGen) := \linspan \bigl\{\,\text{functions $\Phi^{m,\mathfrak{t}}$ of the form
			\eqref{e:set1} with }m\in\N,\, \testtree\in\Clad\bigr\},
\end{equation}
where $\linspan$ denotes the linear span of a set of functions.
Our first main result is the following:
\begin{theorem}[The well-posed martingale problem]\label{T:Aldous}
	For all probability measures\/ $P_0$ on\/ $\Taldous$, the\/ $(\AldGen,\CD(\AldGen),P_0)$-martingale
	problem is well-posed.
	Its unique solution is a Feller process with continuous paths, taking values in the compact state
	space\/ $\Taldous$.  In particular, it is a strong Markov process. Moreover, this solution is ergodic
	with the algebraic measure Brownian CRT as unique invariant distribution.
\end{theorem}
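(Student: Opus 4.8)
The plan is to get \emph{existence} by approximating the Aldous diffusion with the (continuous-time) Aldous chains on $\Clad[N]$, and to get \emph{uniqueness} by a duality with those very same finite chains; the Feller property, continuity of paths, and ergodicity will then fall out of the construction and the duality. Throughout I use the two facts supplied earlier: $\Taldous$ is compact, and the shape polynomials are convergence determining on $\Taldous$.

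For existence I would embed each $N$-cladogram into $\Tbin$ as an algebraic measure tree carrying the uniform distribution on its leaves, turning the continuous-time Aldous chain into a jump process $X^N$ with values in $\Tbin$. The first task is the generator computation: evaluate $\Omega_N$ on the restriction of a shape polynomial $\Phi^{m,\testtree}$ to the embedded cladograms and show that $\Omega_N\Phi^{m,\testtree}(X^N)\to\AldGen\Phi^{m,\testtree}$ uniformly as $N\to\infty$. The mechanism is that an Aldous move in the large tree, read off on $m$ leaves sampled by $\mu$, induces exactly an Aldous move $\Omega_m$ on the generated $m$-cladogram $\shape(\uu)$, with the combinatorial rates conspiring (the total move rate $N(2N-3)$ being precisely why the continuous-time chain already lives on the right time scale). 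Tightness is cheap because $\Taldous$ is compact: each $\Phi^{m,\testtree}(X^N)$ is a real semimartingale with uniformly bounded drift $\Omega_N\Phi^{m,\testtree}$, so Aldous's criterion applies, and since the shape polynomials separate points this gives relative compactness of the laws of $X^N$. Since a single move relocates one atom of mass $1/N$, every shape polynomial jumps by $O(m/N)\to0$; together with tightness this forces every limit point to have continuous paths and to solve the $(\AldGen,\CD(\AldGen),P_0)$-martingale problem.

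For uniqueness I would exploit that $\Omega_m$ is reversible with respect to the uniform distribution, so its rate matrix $Q$ is symmetric, i.e.\ $(\Omega_m\one_\testtree)(\mathfrak{s})=Q(\mathfrak{s},\testtree)=Q(\testtree,\mathfrak{s})=(\Omega_m\one_{\mathfrak{s}})(\testtree)$. Substituting this into \eqref{e:set3} and integrating yields the duality relation
\[
  \AldGen\Phi^{m,\testtree}(\smallx)=\Omega_m\bigl(\testtree'\mapsto\Phi^{m,\testtree'}(\smallx)\bigr)(\testtree),
\]
so that $H(\testtree,\smallx):=\Phi^{m,\testtree}(\smallx)$ is a bounded duality function between any solution $X$ of the martingale problem and the finite-state Aldous chain $Y^{(m)}$ with generator $\Omega_m$. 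As $\lvert H\rvert\le1$, the hypotheses of the duality theorem hold trivially, giving $\E_\smallx[H(\testtree,X_t)]=\E_\testtree[H(Y^{(m)}_t,\smallx)]$. Because $Y^{(m)}$ is an explicit, unique finite chain, the right-hand side is determined, and since the shape polynomials are convergence determining this fixes the one-dimensional distributions of $X_t$ for every $P_0$; by \cite[Theorem~4.4.2]{EthierKurtz86} this upgrades to uniqueness of all finite-dimensional distributions, so the martingale problem is well posed and its solution is Markov. The same duality shows $T_t\Phi^{m,\testtree}=\sum_{\testtree'}p^{(m)}_t(\testtree,\testtree')\Phi^{m,\testtree'}$ is again a finite linear combination of shape polynomials, hence lies in $\C(\Taldous)$ and is continuous in $\smallx$, with strong continuity in $t$ following from $p^{(m)}_t(\testtree,\cdot)\to\delta_\testtree$. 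As the shape polynomials form a point-separating subalgebra containing the constants, they are dense in $\C(\Taldous)$ by Stone--Weierstrass, so $(T_t)$ is a Feller semigroup and $X$ is a Feller, hence strong Markov, process with continuous paths in the compact space $\Taldous$.

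Finally, the algebraic measure Brownian CRT is characterized by its $m$-sample shapes being uniform on $\Clad[m]$, i.e.\ $\E_{\mathrm{CRT}}[\Phi^{m,\testtree'}]=\lvert\Clad[m]\rvert^{-1}$ independently of $\testtree'$; since $\Omega_m$ annihilates constants, the duality relation gives $\E_{\mathrm{CRT}}[\AldGen\Phi^{m,\testtree}]=0$ for all $m,\testtree$, so the CRT law is invariant. For ergodicity and uniqueness, the irreducible finite chain $Y^{(m)}$ equilibrates, $p^{(m)}_t(\testtree,\cdot)\to$ uniform, whence $\E_\smallx[\Phi^{m,\testtree}(X_t)]\to\lvert\Clad[m]\rvert^{-1}=\E_{\mathrm{CRT}}[\Phi^{m,\testtree}]$ from every starting tree $\smallx$; convergence-determinacy turns this into $\law(X_t)\Rightarrow\law(\mathrm{CRT})$ from every initial condition, which simultaneously proves ergodicity and identifies the CRT law as the unique invariant distribution. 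I expect the genuine technical work to sit in the existence step---pinning down the exact limit generator $\AldGen$ from the $N$-chain and the accompanying tightness and path-continuity estimates---whereas the reversibility-driven duality, once spotted, delivers uniqueness, the Feller property, and ergodicity almost for free.
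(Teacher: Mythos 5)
Your proposal is correct and follows essentially the same route as the paper: uniform convergence of the generators $\Omega_N\to\AldGen$ on shape polynomials plus compactness of $\Taldous$ for existence, tightness and path continuity (jumps of size $O(m/N)$); the symmetry/reversibility of $\Omega_m$ yielding the duality $\AldGen\Phi^{m,\testtree}(\smallx)=\Omega_m\bigl(\Phi^{m,\cdot}(\smallx)\bigr)(\testtree)$ for uniqueness, the Feller property, and convergence to the CRT via equilibration of the finite chain. The only (harmless) deviations are cosmetic: you argue invariance of the CRT directly from $\Exp_{\mathrm{CRT}}[\AldGen\Phi]=0$ rather than from convergence plus the Feller property, and strong continuity from $p^{(m)}_t(\testtree,\cdot)\to\delta_\testtree$ rather than from path continuity, both of which are valid.
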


We refer to the process from Theorem~\ref{T:Aldous} as \emph{Aldous diffusion}:

\begin{definition}[Aldous diffusion on binary algebraic measure trees] \label{Def:003}
The unique solution of the $(\AldGen,{\mathcal D}(\AldGen),P_0)$-martingale problem is called \emph{Aldous
diffusion on binary algebraic non-atomic measure trees}, or simply \emph{Aldous diffusion}, started in $P_0$.
\end{definition}

It is important to mention that the Aldous diffusion is dual to the Aldous chain, as for all
$m\in\N$ and $m$-cladograms $\mathfrak{t}$, the Aldous diffusion $X_t=(T_t,c_t,\mu_t)$ started in
$X_0=(T,c,\mu)\in \Taldous$ satisfies
\begin{equation} \label{e:set4}
   \mathbb{E}_{(T,c,\mu)}\Bigl[\mu_t^{\otimes m}\bset{\uu\in T_t^m}{\shape[(T_t,c_t)](\uu)=\testtree} \Bigr]
   =
   \mathbb{E}_{\testtree}\Bigl[\mu^{\otimes m}\bset{\uu\in T^m}{\shape(\uu)={\mathcal T}_t}\Bigr],
\end{equation}
where $({\mathcal T}_t)_{t\ge 0}$ denotes the Aldous chain on $m$-cladograms started in $\testtree$.

The name \emph{Aldous diffusion} is justified by the following convergence result.
Here, we identify the ${\mathfrak{C}_N}$-valued Aldous chain on $N$\nbd cladograms with the $\Tbin$\nbd valued
Markov chain obtained by forgetting the labels of the cladograms and equipping it with the uniform distribution
on the leaves.
\begin{theorem}[Diffusion approximation]\label{T:diffappr}
For each\/ $N\in\N$, let\/ $\smallx_N$ be an\/ $N$-cladogram with the uniform distribution on the leaves.
Assume that\/ $\smallx_N\to\smallx\in\mathbb{T}_2^{\mathrm{cont}}$. Then the Aldous chain\/ $X^N$ starting in\/
$X_0^N=\smallx_N$ converges weakly in Skorokhod path space w.r.t.\ the bpdd-Gromov-weak topology to the Aldous diffusion
starting in\/ $\smallx$.
\end{theorem}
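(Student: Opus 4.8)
The plan is to combine tightness with the well\nbd posedness already supplied by Theorem~\ref{T:Aldous}. Since $\Taldous$ is compact by \cite[Theorem~3]{LoehrWinter}, compact containment of $(X^N)_N$ is automatic, and the $(\AldGen,\CD(\AldGen),\delta_\smallx)$\nbd martingale problem is well\nbd posed by Theorem~\ref{T:Aldous}. Hence, by the standard machinery of \cite[Section~4.8]{EthierKurtz86}, it suffices to (i) show that $(X^N)_N$ is tight in the Skorokhod space $D_{\Taldous}([0,\infty))$, and (ii) identify every subsequential weak limit as a solution of the $(\AldGen,\CD(\AldGen),\delta_\smallx)$\nbd martingale problem; uniqueness from Theorem~\ref{T:Aldous} then forces convergence of the whole sequence. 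Both steps rest on a single analytic input: convergence of the pre\nbd generators on the shape polynomials \eqref{e:set1}, which lie in $\Cb(\Tbin)$ and are convergence determining on $\Taldous$.

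The key lemma I would prove is that for each $m\in\N$ and $\testtree\in\Clad$,
\[
  \bigl\|\Omega_N\Phi^{m,\testtree}-\AldGen\Phi^{m,\testtree}\bigr\|_\infty\tNo 0,
\]
where $\Omega_N$ denotes the generator of the $\Tbin$\nbd valued chain $X^N$, computed on $\Phi^{m,\testtree}$ through the cladogram generator \eqref{e:genN}, and the norm is the supremum over $N$\nbd cladograms. Expanding $\Omega_N\Phi^{m,\testtree}(\smallx_N)=\sum_{(u,e)}\bigl(\Phi^{m,\testtree}(\smallx_N^{(u,e)})-\Phi^{m,\testtree}(\smallx_N)\bigr)$ and writing $\Phi^{m,\testtree}$ via \eqref{e:set1}, one observes that an Aldous move $(u,e)$ alters the sampled shape $\shape(\uu)$ only for samples $\uu$ that contain the relocated leaf $u$: moving a non\nbd sampled leaf leaves every branch point $c(u_a,u_b,u_c)$ among the sampled leaves combinatorially fixed, hence does not change $\shape(\uu)$. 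Reducing to samples with distinct coordinates and $u=u_i$, reattaching $u_i$ at a big\nbd tree edge $e$ produces the $m$\nbd cladogram obtained by inserting $u_i$ into the region of $\shape(\uu^{(-i)})$ onto which $e$ projects; summing over $e$ groups edges by these regions, whose relative sizes converge, as $\smallx_N\to\smallx\in\Taldous$, to the corresponding subtree masses under $\mu$.

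The main obstacle is that, term by term, both the ``relocate'' and the ``stay'' contributions are of order $N$ and individually diverge. The decisive point is that they cancel at leading order once one also averages the position of the resampled leaf $u_i$ against $\mu_N$: a uniformly chosen leaf falls into a given region with probability equal to that region's mass, which exactly matches the edge\nbd grouping weights. What survives is an $O(1)$ drift that, after reindexing the regions as edges of the sampled cladogram, reorganizes into $\int\Omega_m\one_\testtree(\shape(\uu))\,\mu^{\otimes m}(\dx\uu)=\AldGen\Phi^{m,\testtree}$ from \eqref{e:set3}. Carrying out this reorganization uniformly in $\smallx_N$, and controlling the error from samples with repeated or nearly coincident leaves (whose $\mu_N^{\otimes m}$\nbd mass is $O(1/N)$) as well as the finite\nbd$N$ corrections to the region counts, is the technically demanding part; here the convergence $\smallx_N\to\smallx\in\Taldous$ and the non\nbd atomicity of the limit are what guarantee convergence of the region masses.

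Granting the generator convergence, tightness follows from the martingale decomposition $M^N_t:=\Phi^{m,\testtree}(X^N_t)-\Phi^{m,\testtree}(X^N_0)-\int_0^t\Omega_N\Phi^{m,\testtree}(X^N_s)\,\dx s$: the drift $\Omega_N\Phi^{m,\testtree}$ is uniformly bounded (it converges in $\Cb(\Tbin)$), while a single Aldous move changes $\Phi^{m,\testtree}$ by $O(m/N)$ and moves occur at rate $O(N^2)$, so the predictable quadratic variation of $M^N$ accumulates at the bounded rate $O(m^2)$. The Aldous--Rebolledo criterion then yields tightness of each $\Phi^{m,\testtree}(X^N_\cdot)$ in $D_\R([0,\infty))$, and since the shape polynomials are convergence determining, tightness of $(X^N)_N$ in $D_{\Taldous}([0,\infty))$ by \cite[Theorem~3.9.1]{EthierKurtz86}; the $O(1/N)$ jump sizes moreover force continuous limit paths. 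For any subsequential limit $X$, the continuity of $\Phi^{m,\testtree}$ and of $\AldGen\Phi^{m,\testtree}$ (again a shape polynomial), together with the uniform generator convergence, let me pass to the limit in the martingale relation, so that $X$ solves the $(\AldGen,\CD(\AldGen),\delta_\smallx)$\nbd martingale problem, the initial law being $\delta_\smallx$ because $\smallx_N\to\smallx$. Well\nbd posedness from Theorem~\ref{T:Aldous} then identifies this limit uniquely as the Aldous diffusion started in $\smallx$, completing the proof.
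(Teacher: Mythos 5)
Your architecture is exactly the paper's: your key lemma is precisely Proposition~\ref{P:generator} (uniform convergence of the generators $\Omega_N$ to $\AldGen$ on shape polynomials), tightness, continuity of limit paths via the $O(1/N)$ jump sizes, and identification of subsequential limits as solutions of the martingale problem are Corollary~\ref{C:tight}, and the appeal to uniqueness (established in the paper by duality, Corollary~\ref{C:unique}) to upgrade subsequential to full convergence is exactly how the paper concludes Theorem~\ref{T:diffappr}. Even your cancellation mechanism inside the generator computation --- the two individually divergent ``relocate'' and ``stay'' contributions cancelling because the edge count of a region matches its leaf mass to leading order --- is the paper's identity \eqref{e:018}, which gives $\delta^{-1}\ell(\Etk)=2N\mu(\Etk)+1$, and your ``reorganization into $\int\Omega_m\one_\testtree(\shape(\uu))\,\mu^{\otimes m}(\dx\uu)$'' is the identity \eqref{e:020}.

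There is, however, one point where your sketch, taken literally, would produce the wrong generator. You dismiss samples hitting the moved leaf more than once as errors ``whose $\mu_N^{\otimes m}$\nbd mass is $O(1/N)$''. Per Aldous move this is indeed an $O(\eps^2)$ effect, but the generator sums over $N(2N-3)=\Theta(\eps^{-2})$ moves, so these contributions are of order one in total and must be computed, not merely bounded. In the paper they are the second-order terms $B_{k,j}$ in \eqref{e:Omega0}--\eqref{e:B}, each equal to $2\Phi^{m,\testtree}(\smallx)+O(\eps)$; their sum $-2m(m-1)\Phi^{m,\testtree}$ is exactly what turns the first-order contribution $3m\,\Phi^{m,\testtree}+\sum_{k=1}^m\Phi^{m-1,\tk}$ into the coefficient $-m(2m-5)$ of \eqref{e:Omega1}, so that \eqref{e:020} then yields $\AldGen\Phi^{m,\testtree}$ on the nose. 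If these terms are treated as vanishing errors, one lands instead on $\AldGen\Phi^{m,\testtree}+2m(m-1)\Phi^{m,\testtree}$. In your pathwise picture the same correction appears as the distinctness constraint $u_j\ne u_i$ in the ``relocate'' term: after summing over the $N$ choices of the moved leaf, it costs a factor $1-(m-1)\eps$ against an $O(N)$ edge sum, i.e.\ an $O(1)$ term. Two minor remarks: the chains $X^N$ are $\Tbin^N$\nbd valued and carry atoms, so they are not $\Taldous$\nbd valued; tightness should be formulated in Skorokhod space over the compact $\Tbin$, with limit paths lying in $\Taldous$ by Proposition~\ref{p:001}. Also, the generator convergence of Proposition~\ref{P:generator} is uniform over $\Tbin^N$ and uses only that the trees are binary with $N$ leaves; contrary to your closing remark, neither the convergence $\smallx_N\to\smallx$ nor non-atomicity of the limit is needed at that stage --- they enter only through the initial condition and Proposition~\ref{p:001}.
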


Our last result makes a connection to Aldous's original calculation \eqref{e:FVnegative} of the evolution of the
relative sizes of the three subtrees attached to a fixed branch point until that branch point disappears.
Instead of fixing a branch point in the beginning, we take the average over branch points w.r.t.\ the branch point
distribution \eqref{e:nu}. Our topology on $\Tbin$ turns out to be strong enough for us to use the diffusion
approximation from Theorem~\ref{T:diffappr} to extend the martingale probelm for the Aldous diffusion to the
corresponding test functions.
Thus we can do explicit calculations which show the missing term compensating for the disappearance of branch points.

To state the result, we need some notation. For a branch point $v\in \br(T)$, consider the three subtrees
(components) attached to $v$, and denote by $\Sub_v(u)$ the one containing $u\in T$ with
$u\ne v$ (see \eqref{e:equiv} below for a precise definition).
For $\uu=(u_1,u_2,u_3) \in T^3$, let
\begin{equation}\label{e:eta3}
	\ueta(\uu) := \(\eta_i(\uu)\)_{i=1,2,3} := \( \mu\(\Sub_{c(\uu)}(u_i) \)_{i=1,2,3}
\end{equation}
be the vector of the three masses of the components connected to the branch point $c(\uu)$ of $\uu$.
We consider test functions of the following form, called \emph{mass polynomials of degree $3$}: For
$f\colon [0,1]^3 \to \R$ continuous define
\begin{equation}
\label{e:Phi}
   \Phi^{f}(T,c,\mu)
   :=
   \int f\(\ueta(\uu)\) \,\mu^{\otimes 3}(\mathrm{d}\uu),
\end{equation}
where $(T,c,\mu)\in \Tbin$. One of the main results of \cite{LoehrWinter} is that $\Phi^f \in \C(\Tbin)$.

We can extend the domain of the pre-generator $\AldGenMass$ to the set of those mass polynomials $\Phi^f$ of
degree $3$ with $f\colon[0,1]^3\to\R$ twice continuously differentiable. To this end, consider the
migration operators $\Theta_{i,j} \colon \C^2([0,1]^3) \to \C^1([0,1]^3)$, $i,j\in \{1,2,3\}$, $i\ne j$
\begin{equation}\label{e:migration}
	\Theta_{1,2} f (\ux) := \frac{\one_{x_1 > 0}}{x_1}\( f(0,x_2+x_1,x_3) - f(\ux) \)
				+ \one_{x_1=0} \(\partial_2f(\ux) - \partial_1f(\ux) \),
\end{equation}
and $\Theta_{i,j}f$ defined analogously with the indices $1$ and $2$ replaced by $i$ and $j$, respectively.
Let $e_i=(\delta_{ij})_{j=1,2,3}$ be the $i^{\mathrm{th}}$ unit vector and define
\begin{equation}
\label{e:generator}
\begin{aligned}
   \AldGenMass \Phi^f(T,c,\mu)
	&:= \int_{T^3}\dx\mu^{\otimes3}\,\Bigl( 2\sum_{i,j=1}^3 \eta_i(\delta_{ij} - \eta_j) \partial_{ij}^2 f(\underline\eta)
		+3\sum_{i=1}^3 (1-3\eta_i)\partial_i f(\underline\eta) \\
  &\phantom{{}+\int_{T^3}\dx\mu^{\otimes3}\,\Bigl({}} + \tfrac{1}{2}\sum_{i,j=1,\, i\ne j}^3 \Theta_{i,j}f(\ueta)
		+ \sum_{i=1}^3\(f(e_i)-f(\underline\eta)\)\Bigr).
\end{aligned}
\end{equation}

\begin{theorem}[Extended martingale problem for subtree masses]\label{t:massgen}
Let\/ $X=(X_t)_{t\ge 0}$ be the Aldous diffusion on\/ $\Taldous$.
Then for all test functions\/ $\Phi^f$ of the form \eqref{e:Phi} with\/ $f\in\C^2\([0,1]^3\)$, the
process\/ $M^f:=(M^f_t)_{t\ge 0}$ given by
\begin{equation}
\label{e:071}
   M_t^{f}:=\Phi^f(X_t)-\Phi^f(X_0) - \int^t_0\AldGenMass\Phi^f(X_s)\,\mathrm{d}s
\end{equation}
is a martingale.
\end{theorem}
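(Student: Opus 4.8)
The plan is to derive the asserted martingale property as the $N\to\infty$ limit of the corresponding identity for the discrete Aldous chain, using the diffusion approximation of Theorem~\ref{T:diffappr}; this is the route announced in the discussion preceding the statement. Fix $f\in\C^2([0,1]^3)$. For each $N$ the map $\Phi^f$ is a bounded function on the finite set $\Clad[N]$ (equipped with the uniform leaf distribution), so since the continuous-time Aldous chain $X^N$ has the bounded generator $\Omega_N$ of \eqref{e:genN}, Dynkin's formula shows that
\[
   M^{f,N}_t := \Phi^f(X^N_t) - \Phi^f(X^N_0) - \int_0^t \Omega_N\Phi^f(X^N_s)\,\dx s
\]
is a martingale for $X^N$. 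Everything then reduces to two tasks: (i) identifying the limit of the discrete pre-generator, $\Omega_N\Phi^f\to\AldGenMass\Phi^f$, and (ii) transporting the martingale property through the weak limit $X^N\Rightarrow X$.

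The core of the argument, and the step I expect to be the main obstacle, is the asymptotic analysis of $\Omega_N\Phi^f(\smallx_N)$ for $\smallx_N\to\smallx\in\Taldous$. Writing $\Phi^f$ as an average of $f(\ueta(\uu))$ over triples $\uu$ of leaves, I would fix such a triple with branch point $c(\uu)$ and subtree masses $\ueta$, and classify the Aldous moves $(w,e)$ by their effect on $\ueta$. I expect three regimes. Moves relocating a ``mass'' leaf from one of the three subtrees into another change $\ueta$ by $\pm1/N$ in two coordinates; summing these and Taylor-expanding $f$ to second order, the first-order contributions should combine into the drift and the second-order ones into the diffusion coefficients, producing the first line of \eqref{e:generator}. (This is precisely the regime where the mass time-scale of order $N^2$, rather than the metric one, is essential, and where a delicate cancellation keeps the first-order sum bounded.) Moves relocating one of the defining leaves $u_1,u_2,u_3$ shift $c(\uu)$ and collapse a coordinate of $\ueta$ into a neighbour; these should yield the migration terms $\tfrac12\sum_{i\ne j}\Theta_{i,j}f(\ueta)$ of \eqref{e:migration}. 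Finally, the moves that reset the branch-point structure of the triple---annihilating $c(\uu)$ and re-forming it elsewhere---should yield the resurrection terms $\sum_i\bigl(f(e_i)-f(\ueta)\bigr)$. I expect the explicit constants to differ from Aldous's heuristic \eqref{e:FVnegative} precisely because \eqref{e:FVnegative} describes the masses conditioned on the branch point persisting, whereas \eqref{e:generator} averages over branch points and accounts for their annihilation and creation.

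The delicate technical point inside this computation is the behaviour near the boundary of the simplex, where some $\eta_i$ is small: the migration operator \eqref{e:migration} is singular there, with the factor $\one_{x_1>0}/x_1$ matched by the derivative $\partial_2f-\partial_1f$ on $\{x_1=0\}$. At the discrete level a non-empty subtree carries mass at least $1/N$, so I expect this singularity to be automatically regularized, and matching the discrete difference quotient to the limiting derivative is the crux. Concretely, I would first establish a uniform bound $\sup_N\sup_{\smallx_N}\lvert\Omega_N\Phi^f(\smallx_N)\rvert<\infty$ together with the pointwise convergence $\Omega_N\Phi^f(\smallx_N)\to\AldGenMass\Phi^f(\smallx)$ along all sequences $\smallx_N\to\smallx\in\Taldous$; it may be cleanest to do this first for a convenient core (e.g.\ polynomial $f$) and extend to $f\in\C^2$ by approximation, using that $\Phi^f$ and $\AldGenMass\Phi^f$ are continuous on $\Tbin$ (the latter being a finite combination of mass polynomials, continuous by \cite{LoehrWinter}).

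Granting the generator convergence, the final step is a routine weak-convergence argument in the spirit of \cite[Ch.~4]{EthierKurtz86}. By Theorem~\ref{T:diffappr} we have $X^N\Rightarrow X$ in Skorokhod path space, and the limit has continuous paths, so every time is a.s.\ a continuity point and finite-dimensional distributions converge at all times. Testing the identity $\E[(M^{f,N}_t-M^{f,N}_s)\prod_k g_k(X^N_{t_k})]=0$ for $t_1<\dots<t_n\le s<t$ and $g_k\in\Cb(\Tbin)$ and letting $N\to\infty$, the boundary terms converge by continuity of $\Phi^f$, the Dynkin integral converges since $\Omega_N\Phi^f\to\AldGenMass\Phi^f$ with continuous limit, and the required uniform integrability is free from the uniform bounds and the compactness of $\Taldous$. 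This gives
\[
   \E\Bigl[\bigl(\Phi^f(X_t)-\Phi^f(X_s)-\int_s^t\AldGenMass\Phi^f(X_r)\,\dx r\bigr)\prod_k g_k(X_{t_k})\Bigr]=0,
\]
i.e.\ the martingale property of $M^f$ for starting points in $\Taldous$ approximable by cladograms; a general initial law $P_0$ then follows by mixing, using the density of cladograms in $\Taldous$ and the Feller property from Theorem~\ref{T:Aldous}.
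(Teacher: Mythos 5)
Your overall architecture is exactly the paper's: the paper proves Theorem~\ref{t:massgen} by establishing uniform convergence of generators on mass polynomials (Proposition~\ref{p:massgen}, $\sup_{\smallx\in\Tbin^N}|\Omega_N\Phi^f(\smallx)-\AldGenMass\Phi^f(\smallx)|\to0$) and then transporting the discrete Dynkin martingales through the weak limit of Theorem~\ref{T:diffappr} ``as in Lemma~4.5.1 of \cite{EthierKurtz86}''. Your limit-transport step, your uniformity criterion (uniform bound plus convergence along all sequences $\smallx_N\to\smallx$, which by compactness of $\Tbin$ and continuity of $\AldGenMass\Phi^f$ is equivalent to the sup-convergence the paper proves), and your handling of general initial laws are all sound and essentially match the paper.

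The genuine gap is in the generator computation, which is the entire content of the theorem (the precise coefficients in \eqref{e:generator}), and your move-by-move classification, as stated, would produce a wrong drift constant. Taylor-expanding $f(\ueta)$ alone over the mass-relocation moves --- your regime (i) --- yields the \emph{frozen branch-point-distribution} contribution, which the paper isolates in Lemma~\ref{l:deltaf}: it gives the correct diffusion term $2\sum_{i,j}\eta_i(\delta_{ij}-\eta_j)\partial^2_{ij}f$ but drift $-\sum_i(1-3\eta_i)\partial_i f$, i.e.\ coefficient $-1$ as in Aldous's heuristic \eqref{e:FVnegative}, not the $+3$ asserted in \eqref{e:generator}. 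The missing $+4\sum_i(1-3\eta_i)\partial_i f$ does not come from moves touching the sampled triple at all; it comes from the first-order change of the sampling weight $\nu\{v\}=6\eta_1(v)\eta_2(v)\eta_3(v)$ under the very same mass moves (the cross term between $\Delta\nu\{v\}$ and the rates, the paper's $B_v$ in \eqref{e:bvcalc}--\eqref{e:Bv}). Your trichotomy --- mass moves act on $f$, sample-leaf moves give migration, annihilation/creation gives jumps --- has no slot for this reweighting effect, because $\Phi^f$ integrates over triples resampled under the \emph{current} measure, so every move perturbs the weight of every branch point. The paper's organizing device is to symmetrize $f$ (Remark~\ref{r:Aldsym}), write $\Phi^f(\smallx)=\sum_{v\in\br(\Tb)}\nu\{v\}f(\ueta(v))$, and split $\Omega_N\Phi^f$ into the $\nu$-frozen part (Lemma~\ref{l:deltaf}) and the $\Delta\nu$ part (old branch points $B_v$ and newly created ones $C_{x_e}$, handled with the combinatorial Lemmas~\ref{l:lambda} and \ref{l:edgematch}); notably, the migration terms $\tfrac12\sum_{i\ne j}\Theta_{i,j}f$ and the jump terms also emerge only from a cancellation \emph{between} the $B_v$ and $C_{x_e}$ contributions, not from a single move type. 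Your intuition about the discrete regularization of the $\one_{x_1>0}/x_1$ singularity and about why the constants differ from \eqref{e:FVnegative} is correct, but without expanding $\nu_z\{v\}f(\uetaz(v))-\nu\{v\}f(\ueta(v))$ jointly (rather than $f$ alone), the computation as you describe it would not close to \eqref{e:generator}.
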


\smallskip\noindent{\bf Related work.}
We note that a construction related to the Aldous diffusion has been recently established independently in a
sequence of papers
\cite{FormanPalRizzoloWinkel2016,FormanPalRizzoloWinkel2018a,FormanPalRizzoloWinkel2018b,FormanPalRizzoloWinkel2018c}.
A discussion of the differences is therefore in order. Their construction was first sketched in
\cite{Pal2011}. Pal suggests to first take a finite number of branch points, consider the cladogram spanned by them, and decompose the lines connecting any two neighboring branch points of this cladogram into subtrees. Then study the suitably rescaled subtree masses as partitions
of an interval of random length while relaxing the constrain that the total number of vertices must be preserved by letting removing and inserting of edges happen independently. When applying the time change which reverses the described Poissonization, on the proposed time scale the masses converge to an evolving interval partition described by a family of diffusions indexed by $\N$. However, if this
chain runs,
then the mass branching off one of the external edges of the cladogram gets exhausted. When this happens, the dynamics breaks
down, and one needs to find a slightly different set of branch points to
proceed.
To resolve the problem of disappearing vertices \cite{FormanPalRizzoloWinkel2018a} suggests a
smart way of swapping labels of the cladograms in such a way that
the resulting dynamics preserves stationarity when one starts from the uniform
distribution.

Our construction is related in spirit but differs in some important aspects.
First, rather than sampling cladograms and describing their dynamics under the Aldous chain, we describe the
behavior of the \emph{average} of the quantities of interest over uniformly sampled cladograms. This allows us to give a nice characterization of the Aldous diffusion as a unique solution of some martingale problem. As a consequence, we do not require the initial distribution for the Aldous diffusion to be uniform but can rather let it start
in any deterministic continuum tree. We can show that the Aldous chain converges weakly in path space to the
Aldous diffusion, and that the latter is a Feller process.
Note that \cite{FormanPalRizzoloWinkel2018c} never states explicitly that the \Rtree-valued diffusion
constructed is a strong Markov process.
Furthermore, we are also able to state a duality relation, which
allows us to conclude convergence to the uniform distribution for all starting points as time tends to infinity.
In \cite{LoehrWinter}, we put some effort in establishing with the space of algebraic measure trees a new state space
and invested in a detailed study of topological aspects. As a result, we obtained equivalent formulations of our
notion of convergence on the subspace of binary trees which made martingale convergence statements very much
straightforward. Finally, the framework provided is not restricted to the construction of the continuum limit of the Aldous Markov chain. It can also be applied to other (not necessarily symmetric) sampling consistent tree dynamics. For example, in \cite{Nussbaumer} a tree-valued dynamics is constructed which has the algebraic measure Kingman tree as its stationary distribution.

Other approaches of encoding relatives of binary algebraic trees can be found in
\cite{Forman2018} and \cite{EvansGruebelWakolbinger2017}. The \emph{R\'emy chain} considered in
\cite{EvansGruebelWakolbinger2017} is a Markov chain of growing (ordered) trees that is somewhat related to the
Aldous chain: it is the process obtained by successively inserting new leaves at randomly chosen edges without
removing a leaf before.

\smallskip\noindent{\bf Outline. }
The rest of the paper is organized as follows.
In Section~\ref{S:algebraic}, we introduce our state space of algebraic measure trees and recall its most
important properties from \cite{LoehrWinter}.

In Section~\ref{S:generator}, we show tightness of the Aldous chains and existence of solutions of the
martingale problem from Theorem~\ref{T:Aldous}. We do so by using and proving uniform convergence of (pre-)generators.
In Section~\ref{S:duality}, we obtain the duality for the Aldous diffusion (Proposition~\ref{P:duality}), and use
it to show uniqueness of solutions of the martingale problem.
In Section~\ref{S:longterm}, we show that the Aldous diffusion has a unique invariant measure, namely the
algebraic measure Brownian CRT, and that the Aldous diffusion converges to it in law as time goes to infinity
(Proposition~\ref{P:Aldlong}). We also finish the proof of Theorems~\ref{T:Aldous} and \ref{T:diffappr}.
In Section~\ref{S:subtreemass}, we prove Theorem~\ref{t:massgen} and apply it to calculate the annealed average
distance of two points in the Brownian CRT with respect to our intrinsic metric.

\section{The state space of binary, algebraic measure trees}
\label{S:algebraic}
In this section we introduce the state space. The goal is
to overcome the metric issue raised in the introduction by focusing on the algebraic tree structure only.
We encode the cladograms as binary, algebraic trees, and use the space of these trees together with the
bpdd-Gromov-weak topology studied in \cite{LoehrWinter}. All proofs can be found there.

\begin{definition}[Algebraic tree]\label{Def:algtree}
An \emph{algebraic tree} is a non-empty set $T$ together with a symmetric map $c\colon T^3\to T$ satisfying the following:
\begin{enumerate}[\quad\bf (1pc)]\setcounter{enumi}{1}
  \item  For all $x_1,x_2\in T$, $\,c(x_1,x_2,x_2)=x_2$.
  \item  For all $x_1,x_2,x_3\in T$, $\,c\(x_1,x_2,c(x_1,x_2,x_3)\)=c(x_1,x_2,x_3)$.
  \item  For all $x_1,x_2,x_3,x_4\in T$, \begin{equation}
      \label{e:4pt}
      c(x_1,x_2,x_3)\in\bigl\{c(x_1,x_2,x_4),\,c(x_1,x_3,x_4),\,c(x_2,x_3,x_4)\bigr\}.
      \end{equation}
\end{enumerate}
We refer to the map $c$ as \emph{branch point map}. A \emph{tree isomorphism} between two algebraic trees
$(T_i,c_i)$, $i=1,2$, is a bijective map $\phi\colon T_1 \to T_2$ with $\phi\(c_1(x_1,x_2,x_3)\)=c_2\(\phi(x_1),
\phi(x_2), \phi(x_3)\)$ for all $x_1,x_2,x_3\in T_1$.
\end{definition}

For each point $x\in T$, we define an equivalence relation $\sim_x$ on $T\setminus\{x\}$ such that for all
$y,z\in T\setminus\{x\}$,
$y\sim_x z$
  iff $c(x,y,z)\not =x$.
For $y\in T\setminus\{x\}$, we denote by
\begin{equation}\label{e:equiv}
  \Sub_x(y):=\big\{z\in T\setminus\{x\}:\,z\sim_x y\big\}
\end{equation}
the equivalence class w.r.t.\ $x\in T$ which contains $y$. We also call $\Sub_x(y)$ the \emph{component} of
$T\setminus\{x\}$ containing $y$.
An algebraic tree $(T,c)$ allows for all kinds of notions which capture the tree structure, e.g.,
\begin{itemize}
\item we say that $S\subseteq T$ is a \emph{subtree} of $T$ iff $c(S^3)=S$,
\item we call the number of components of $T\setminus \{x\}$ the \emph{degree} of $x\in T$ and write
	$\deg(x)=\#\bset{\Sub_x(y)}{y\in T\setminus\{x\}}$,
\item we say that $u\in T$ is a \emph{leaf} iff $\deg(u)=1$, and write $\lf(T)$ for the set of leaves,
\item we say that $v\in T$ is a \emph{branch point} iff $\deg(v)\ge 3$, or equivalently, $v=c(x_1,x_2,x_3)$ for some
	$x_1,x_2,x_3\in T\setminus\{v\}$, and write $\br(T)$ for the set of branch points,
\item we write $[x,y]$, $x,y\in T$, for the \emph{interval}
\begin{equation}
\label{e:interval}
  [x,y]:=\bset{z\in T}{c(x,y,z)=z},
\end{equation}
\item and we say that $\{x,y\}$ is an edge iff $x\ne y$ and $[x,y]=\{x,y\}$.
\end{itemize}

There is a natural topology on a given algebraic tree, namely the \emph{component topology} generated by the set of all components
$\Sub_x(y)$ as defined in \eqref{e:equiv} with $x\ne y$, $x,y\in T$.
In what follows we refer to an algebraic tree $(T,c)$ as \emph{order separable} if it is separable w.r.t.\ this
topology and has at most countably many edges.
We further equip order separable algebraic trees with a probability measure on the Borel $\sigma$\nbd algebra
${\mathcal B}(T)$ of the component topology.
This so-called \emph{sampling measure} allows to sample vertices from the tree.
\begin{definition}[Algebraic measure trees]\label{Def:amt}
A (separable) \emph{algebraic measure tree} $(T,c,\mu)$ is an order separable algebraic tree $(T,c)$
		together with a probability measure $\mu$ on ${\mathcal B}(T)$.
\end{definition}

In what follows we call two algebraic measure trees $(T_i,c_i,\mu_i)$, $i=1,2$, \emph{equivalent} if there exist
subtrees $S_i\subseteq T_i$ with $\mu_i(S_i)=1$,  $i=1,2$, and a measure preserving tree isomorphism $\phi$ from
$S_1$ onto $S_2$, i.e.,
$c_2(\phi(x),\phi(y),\phi(z))=\phi(c_1(x,y,z))$ for all $x,y,z\in S_1$, and $\mu_1\circ\phi^{-1}=\mu_2$.
We define
\begin{equation}
\label{e:004}
    \T := \mbox{set of equivalence classes of algebraic measure trees}.
\end{equation}
With a slight abuse of notation, we will write $\smallx=(T,c,\mu)$ for the algebraic tree as well as the
equivalence class.
Note that $\deg(x),\,\lf(T),\,\edge(T),\ldots$ are properties of the particular representative
and not preserved under equivalence, because we do not require the whole trees to be isomorphic.
For instance, every equivalence class contains a representative without edges (informally, we can replace edges
by line segments carrying no measure). 

Recall the branch point distribution $\nu=\nu_\smallx=\mu^{\otimes 3}\circ c^{-1}$ and the pseudometric $r_\mu$ from
Equations \eqref{e:nu} and \eqref{e:metric}, respectively.
For every equivalence class of algebraic measure trees, a representative $(T,c,\mu)$ can be chosen such that
$r_\mu$ is a metric (by identifying points of distance zero in any representative).
One can check that in this case, $r_\mu$ induces the component topology, and $(T,r_\mu)$ is a separable metric
tree in the sense of \cite{AthreyaLohrWinter2017} (i.e., isometric to a subset of an \Rtree\ containing all branch points)
satisfying for all $x,y,z\in T$,
\begin{equation}
\label{e:gener}
  [x,y]_{r_\mu}\cap[x,z]_{r_\mu}\cap[y,z]_{r_\mu}=\big\{c(x,y,z)\big\},
\end{equation}
where $[x,y]_{r_\mu}=\set{v\in T}{r_\mu(x,y)=r_\mu(x,v)+r_\mu(v,y)}$ denotes the interval in $(T,r_\mu)$.
In particular, $[x,y]_{r_\mu}=[x,y]$.
Note that any point which carries positive mass is an isolated point in the metric space $(T,r_\mu)$.

As in any metric tree, we can define for a fixed reference point (root) $\rho\in T$ a unique measure
$\ell^{(T,c,\mu,\rho)}$ on $(T,{\mathcal B}(T))$
which is characterized by the two properties $\ell^{(T,c,\mu,\rho)}((\rho,y]):=r_\mu(\rho,y)$ and
$\ell^{(T,c,\mu,\rho)}\(\lf(T)\setminus \at(\mu)\)=0$,
where $\at(\mu)$ denotes the set of atoms of $\mu$.
The measure $\ell^{(T,c,\mu,\rho)}$ is referred to as \emph{length measure} w.r.t.\ $\rho$. Note
that it depends on the choice of the distinguished point $\rho$.
However, the total mass of the length measure does not depend on the choice of $\rho$ and equals
\begin{equation}
\label{totallength}
  \|\ell^{(T,c,\mu,\rho)}\|:=\ell^{(T,c,\mu,\rho)}\(T\)
  =\tfrac{1}{2}\int_T\deg(v)\;\nu(\mathrm{d}v).
\end{equation}

We define convergence in $\T$ as follows.
\begin{definition}[Bpdd-Gromov-weak topology]
\label{Def:001} We say that a sequence $(\smallx_n)_{n\in\N}$ of (equivalence classes of) algebraic
measure trees $\smallx_n=(T_n,c_n,\mu_n)\in \T$ converges \emph{branch point distribution distance
Gromov-weakly} (\emph{bpdd-Gromov-weakly}) to the algebraic measure tree $(T,c,\mu)\in \T$ iff
the sequence $(\tilde{\smallx}_n)_{n\in\N}$ of (equivalence classes of) metric measure trees
$\tilde{\smallx}_n:=(T_n,r_{\mu_n},\mu_n)\in \M$ converges to the metric measure tree $(T,r_\mu,\mu)\in\M$
Gromov-weakly, i.e., if for $U_1^n, U_2^n, ...$ independent and $\mu_n$-distributed, and $U_1, U_2, ...$ independent and $\mu$-distributed, for all $m\in\N$,
\begin{equation}
  \(r_{\mu_n}(U^n_i,U^n_j)\)_{1\le i<j\le m} \;\Tno\; \(r_\mu(U_i,U_j)\)_{1\le i<j\le m}.
\end{equation}
\end{definition}

In this paper we are only considering binary algebraic measure trees with the property that the measure has
atoms only (if at all) on the leaves of the tree, i.e.\ the subspace of $\T$ given by
\begin{equation}
\label{e:binary}
   \Tbin=\bset{(T,c,\mu)\in \T}{\deg(v)\le 3\,\forall v\in T,\; \at(\mu)\subseteq\lf(T)},
\end{equation}
(compare \eqref{e:T2}). Even though the equivalence class $\smallx\in\Tbin$ contains algebraic measure trees which are
not binary, we will implicitly assume that the chosen representative $(T,c,\mu)$ satisfies $\deg(v)\le 3$.
In this subspace, it turns out that bpdd-Gromov-weak convergence is equivalent to another very
useful notion of convergence, namely the so-called sample shape convergence, which we introduce next.

\begin{definition}[\nclad]\label{Def:mclado}
For $m\in\N$, an \emph{\nclad} is a binary, finite tree $(C,c)$ consisting only of leaves and branch points,
together with a surjective labelling map $\zeta\colon \{1,...,m\} \to \lf(C)$.
\end{definition}

Note that an \nclad\ has at most $m$ leaves (and $m-2$ branch points), but can have less if a leaf has multiple
labels. An \nclad\ $(C,c,\zeta)$ is an $m$\nbd cladogram if and only if $\zeta$ is injective.
We call two \nclads\ $(C_1,c_1,\zeta_1)$ and $(C_2,c_2,\zeta_2)$ \emph{isomorphic} if there exists a tree
isomorphism $\phi$ from $(C_1,c_1)$ onto $(C_2,c_2)$ such that $\zeta_2=\phi \circ \zeta_1$.
Furthermore, we denote the sets of isomorphism classes of \nclad\ and $m$\nbd cladograms by $\mlClad$ and $\Clad$,
respectively, i.e.,
\begin{equation}
\label{e:CCm}
\begin{aligned}
	\mlClad &:=\bigl\{\text{isomorphism classes of \nclads}\bigr\}
\end{aligned}
\end{equation}
and
\begin{equation}
\label{e:CC}
\begin{aligned}
	\Clad &:=\bset{(C,c,\zeta) \in \mlClad}{\zeta \text{ injective}}.
\end{aligned}
\end{equation}

\begin{definition}[The shape function]\label{Def:shapeclado}
For a binary algebraic tree $(T,c)$, $m\in\N$, and $u_1,...,u_m\in T\setminus \br(T)$ (not necessarily
distinct), there exists a unique (up to isomorphism) \nclad\
\begin{equation}\label{e:shape}
   \shape(u_1,...,u_m)=(C,c_C,\zeta)
\end{equation}
with $\lf(C)=\{u_1,...,u_m\}$ and $\zeta(i)=u_i$, such that the identity on $\lf(C)$ extends to a tree
homomorphism $\pi$ from $C$ onto $c\(\{u_1,...,u_m\}^3\)$, i.e., for all $i,j,k=1,...,m$,
\begin{equation}
   \pi\(c_C(u_i,u_j,u_k)\)=c(u_i,u_j,u_k).
\end{equation}
We refer to $\shape(u_1,...,u_m)\in\mlClad$ as the
\emph{shape} of $u_1,...,u_m$ in $(T,c)$.
\end{definition}

\xymatfig{f:shape}
	{&u_1\xyedge[d] &&&&&&&&\\
	 \xyedge[r]&\node\xyedge[dr] &     &   &\node\xyedge[ul]\xyedge[ur]       &      &&&&&   u_1\xyedge[dr] & & u_3\xyedge[d] & \\
	     &&\node\xyedge[r]&u_3\xyedge[r]&\node\xyedge[r]\xyedge[u]&u_4&&&&&     &\node\xyedge[r]&\node\xyedge[r]&u_4\\
	 &u_2\xyedge[ur] &     &   &  &           &&&&&   u_2\xyedge[ur] & & &}
{A tree $T$ and the shape $\shape(u_1,u_2,u_3,u_4)$. The cladogram is not isomorphic to the subtree
$c(\{u_1,u_2,u_3,u_4\}^3)$ because $u_3\in \openint{u_1}{u_4}$.}

\begin{definition}[Sample shape convergence]
We say that a sequence $(\smallx_n)_{n\in\N}$ of (equivalence classes of) binary algebraic measure trees $(T_n,c_n,\mu_n)$ \emph{converges in sample shape} to the (equivalence class of the) algebraic measure tree $(T,c,\mu)$ iff for $U_1^n, U_2^n, ...$ independent and $\mu_n$-distributed, and $U_1, U_2, ...$ independent and $\mu$-distributed, for all $m\in\N$,
\begin{equation}
  \shape\(U^n_1,...,U^n_m\)\Tno \shape\(U_1,...,U_m\).
\end{equation}
\end{definition}

To be later in a position to recover the calculations of Aldous and others concerning the dynamics of subtree masses, we introduce yet another notion of convergence.

\begin{definition}[Sample subtree mass convergence]
We say that a sequence $(\smallx_n)_{n\in\N}$ of (equivalence classes of) algebraic measure trees $(T_n,c_n,\mu_n)$ \emph{converges in sample subtree mass} to the (equivalence class of the) algebraic measure tree $(T,c,\mu)$ iff for $U^n_1, U^n_2, ...$ independent and $\mu_n$-distributed, and $U_1, U_2, ...$ independent and $\mu$-distributed, for all $m\in\N$,
\begin{equation}
  \(\mu_n(\Sub_{c_n(U^n_i,U^n_j,U^n_k)}(U^n_i))\)_{i,j,k=1,...,m} \;\Tno\;
  	\bigl(\mu(\Sub_{c(U_i,U_j,U_k)}(U_i))\bigr)_{i,j,k=1,...,m}.
\end{equation}
\end{definition}

The following results are crucial for the construction of Aldous diffusion and stated in \cite[Proposition~2.32, Theorem~3, Corollary~5.21]{LoehrWinter}.
On $\Tbin$, all of the above notions of convergence are equivalent.
By \eqref{totallength}, the total length of binary algebraic measure trees is uniformly bounded by $3$, and one can show that the space $\Tbin$ is compact.

\begin{proposition} \label{P:conveq}
	Let\/ $(\smallx_N=(T_N,c_N,\mu_N))_{N\in\N}$ and\/ $\smallx=(T,c,\mu)$ be in\/ $\Tbin$.
The following are equivalent:
\begin{enumerate}[(1)]
	\item $\smallx_N\tno\smallx$ w.r.t.\ sample shape convergence.
\item For all\/ $m\in\N$ and\/ $\mathfrak{t}\in\mlClad$,
\begin{equation}
\label{e:005}
   \mu_N^{\otimes m}\big\{(u_1,...,u_m):\,\mathfrak{s}_{(T_N,c_N)}\(\uu\)=\mathfrak{t}\big\}
   \tno
   \mu^{\otimes m}\big\{(u_1,...,u_m):\,\shape\(\uu\)=\mathfrak{t}\big\}.
\end{equation}
\item $\smallx_N\tno\smallx$ Gromov-weakly w.r.t.\ the branch point distribution distance.
\item For all\/ $m\in\N$ and\/ $\phi\in{\mathcal C}_b(\R_+^{m\times m})$,
\begin{equation}
\label{e:008}
   \int\mu_N^{\otimes m}(\mathrm{d}\uu)\,\phi\((r_{\mu_N}(u_i,u_j))_{1\le i,j\le m}\)
   \tno
   \int\mu^{\otimes m}(\mathrm{d}\uu)\,\phi\((r_\mu(u_i,u_j))_{1\le i,j\le m}\).
\end{equation}
\item $\smallx_N\tno\smallx$ w.r.t.\ sample subtree mass convergence.
\item For all\/ $m\in\N$ with\/ $m\ge 3$ and\/ $f\in{\mathcal C}_b([0,1]^{m^3})$,
\begin{equation}
\label{e:009}
\begin{aligned}
 \MoveEqLeft \int\mu_N^{\otimes
 m}(\mathrm{d}\uu)\,f\Bigl(\(\mu_N(\Sub_{c_N(u_i,u_j,u_k)}(u_i))\)_{i,j,k=1,\ldots,m}\Bigr)
   \\
   &\tno
   \int\mu^{\otimes m}(\mathrm{d}\uu)\,f\Bigl(\(\mu(\Sub_{c(u_i,u_j,u_k)}(u_i))\)_{i,j,k=1,\ldots,m}\Bigr)
\end{aligned}
\end{equation}
\end{enumerate}
\end{proposition}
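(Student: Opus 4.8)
\emph{Plan of proof.} I would first split the six conditions into three pairs --- a ``convergence in distribution of a sampled statistic'' version and an equivalent ``test‑function'' version --- and dispose of the three within‑pair equivalences, which are soft. Condition~(1) is convergence in distribution of the $\mlClad$‑valued variable $\shape(U_1^N,\dots,U_m^N)$; since for fixed $m$ only finitely many isomorphism classes in $\mlClad$ arise, this is the same as convergence of the probability of each class, and $\mathbb{P}\bigl(\shape(U^N_1,\dots,U^N_m)=\testtree\bigr)=\mu_N^{\otimes m}(\shape^{-1}(\testtree))$ is exactly the left‑hand side of \eqref{e:005}; hence (1)$\Leftrightarrow$(2). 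Condition~(3) is, \emph{by} Definition~\ref{Def:001}, Gromov‑weak convergence of $(T_N,r_{\mu_N},\mu_N)$, whose characterization by distance‑matrix distributions recalled around \eqref{e:polyn} is precisely \eqref{e:008} (the difference between $\R_+^{m\times m}$ and $\R_+^{\binom m2}$ being immaterial, as $r_\mu$ is symmetric with vanishing diagonal); hence (3)$\Leftrightarrow$(4). Finally $[0,1]^{m^3}$ is compact metric, so testing the subtree‑mass vectors against all $f\in\C_b$ in \eqref{e:009} is the definition of their convergence in distribution; hence (5)$\Leftrightarrow$(6).

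It then remains to link the three pairs, and here I would use the continuity of the test functions established in \cite{LoehrWinter} and recalled in the introduction, namely $\Phi^{m,\testtree}\in\C_b(\Tbin)$ and $\Phi^f\in\C(\Tbin)$, both understood with respect to the bpdd‑Gromov‑weak topology, i.e.\ with respect to convergence in the sense of~(3). Continuity of $\Phi^{m,\testtree}$ yields at once $\mu_N^{\otimes m}(\shape^{-1}(\testtree))\to\mu^{\otimes m}(\shape^{-1}(\testtree))$ whenever (3) holds, that is (3)$\Rightarrow$(2); and continuity of $\Phi^f$ gives (3)$\Rightarrow$(6). Thus the whole content is carried by the two reverse implications (2)$\Rightarrow$(3) and (6)$\Rightarrow$(3).

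For (2)$\Rightarrow$(3) I would express the distance‑matrix test functionals through shapes and pass to the limit by an empirical (law‑of‑large‑numbers) approximation. The key observation is that distances are \emph{exactly} shape functionals of enlarged tuples: by \eqref{e:metric} and $\nu=\mu^{\otimes3}\circ c^{-1}$ we have $r_\mu(x,y)=\nu([x,y])-\tfrac12\nu(\{x\})-\tfrac12\nu(\{y\})$, and both indicators $\one\{c(V_1,V_2,V_3)\in[x,y]\}$ and $\one\{c(V_1,V_2,V_3)=x\}$ are determined by the shape of $(x,y,V_1,V_2,V_3)$ alone. Hence, given $\phi\in\C_b$ and $m$ outer samples, I would replace each $r_\mu(u_i,u_j)$ by its empirical version $\widehat r^{(K)}(u_i,u_j)$ over $K$ auxiliary triples; then $\int\phi\bigl((\widehat r^{(K)}(u_i,u_j))_{i<j}\bigr)\,\mathrm d\mu^{\otimes(m+3K)}$ is a genuine shape polynomial, which converges as $N\to\infty$ by (2) (equivalently \eqref{e:005}) at order $m+3K$, while a Bernoulli‑variance bound that is \emph{uniform over all trees in} $\Tbin$, together with the uniform continuity of $\phi$, makes the $K\to\infty$ error uniform in $N$; interchanging the two limits gives \eqref{e:008}, hence~(3). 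For (6)$\Rightarrow$(3) I would instead argue by compactness: since $\Tbin$ is compact, every subsequence of $(\smallx_N)$ has a sub‑subsequence converging in the sense of~(3) to some $\smallx'$, along which (3)$\Rightarrow$(6) forces the subtree‑mass statistics of $\smallx'$ to equal those of $\smallx$; as the subtree‑mass statistics separate points of $\Tbin$ (they determine the distance‑matrix distributions, which by Gromov's reconstruction theorem determine the algebraic measure tree), we get $\smallx'=\smallx$, and the usual subsequence principle yields $\smallx_N\to\smallx$ in the sense of~(3).

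The step I expect to be the main obstacle is the interchange of the two limits $N\to\infty$ and $K\to\infty$ in (2)$\Rightarrow$(3): the empirical approximation is useful only if its convergence as $K\to\infty$ is uniform over the \emph{entire} family $\{\smallx_N\}_N$, which I would secure from the fact that each coordinate of the empirical distance is an average of $[0,1]$‑valued summands with variance at most $1/(4K)$, independently of the tree. A secondary difficulty is the control of degenerate configurations (coincident sample points, or a sampled point falling on the skeleton) where the combinatorial read‑off maps can jump; here the structure of $\Tbin$ is essential, since degrees are at most $3$ and the atoms of $\mu$ sit only on leaves, so the skeleton is $\mu$‑null and such degeneracies are negligible for the limit.
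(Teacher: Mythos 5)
Your overall architecture is sensible (and, for what it is worth, the paper itself does not prove this proposition at all --- it imports it from \cite{LoehrWinter}, Proposition~2.32, Theorem~3 and Corollary~5.21, so any proof must essentially redo that work), but there are two genuine gaps, both at the places where the real content sits. First, your key claim in (2)$\Rightarrow$(3) --- that $\one\{c(V_1,V_2,V_3)\in[x,y]\}$ and $\one\{c(V_1,V_2,V_3)=x\}$ are determined by $\shape(x,y,V_1,V_2,V_3)$ alone --- is false as stated. Figure~\ref{f:shape} in the paper shows exactly the problem: the shape does not distinguish a sampled point lying \emph{on} the path spanned by two others from one hanging off it, and on these two configurations $\one\{c(u_1,u_3,u_4)=u_3\}$ takes the values $1$ and $0$ respectively. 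The read-off is therefore at best an almost-sure statement, valid simultaneously for \emph{all} trees in $\Tbin$, and proving it is the crux (it uses binarity, the multi-labelled-cladogram convention at coincident atoms, and $\mu$-nullness of $\br(T)$). Worse, your proposed reason for dismissing degeneracies --- ``the skeleton is $\mu$-null'' --- contradicts the definition \eqref{e:T2}: only \emph{atoms} are confined to leaves there. The unit interval with Lebesgue measure lies in $\Taldous\subseteq\Tbin$ and carries all of its mass on the skeleton, and for it a sampled point falls inside the segment spanned by two other samples with positive probability. So the degenerate configurations are not negligible; one must instead prove that the cladogram read-off remains a.s.\ consistent \emph{on} them, which is a different and harder statement than the one you offer. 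Once that lemma is in place, your empirical $K$-sample scheme with the tree-uniform variance bound $O(1/K)$ and the $N\to\infty$/$K\to\infty$ interchange is sound, since the approximant is a genuine shape polynomial of order $m+3K$ and $\mlClad[m+3K]$ is finite.

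Second, in (6)$\Rightarrow$(3) the compactness reduction via Proposition~\ref{P:compact} is fine, but the separation step --- that the subtree-mass statistics separate points of $\Tbin$ --- is asserted in a parenthesis, and it is of the same order of difficulty as the implication it is meant to prove: one must express $\nu([x,y])$, hence $r_\mu(x,y)$, through iterated subtree-mass functionals of enlarged samples (e.g.\ via identities like ``$x\in\Sub_{c(V_1,V_2,V_3)}(V_1)$ iff $c(x,V_2,V_3)=c(V_1,V_2,V_3)$'', whose mass-readability again needs the same a.s./degeneracy care as above). Moreover, Gromov's reconstruction theorem only returns the metric measure space $(T,r_\mu,\mu)$; to conclude $\smallx'=\smallx$ in $\Tbin$ you additionally need injectivity of $\smallx\mapsto(T,r_\mu,\mu)$ on equivalence classes --- true, and implicit in the metrizability asserted in Proposition~\ref{P:compact}, but it must be invoked or proved, not elided. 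Finally, be aware of the circularity budget: you obtain (3)$\Rightarrow$(2) and (3)$\Rightarrow$(6) by quoting the continuity of $\Phi^{m,\testtree}$ and $\Phi^f$ from \cite{LoehrWinter}, which is itself part of the package the paper cites for this very proposition; that is defensible as modular design, but it means the two converse implications carry all the mathematical content, and both currently rest on the unproven read-off and separation lemmas. (The within-pair equivalences (1)$\Leftrightarrow$(2), (3)$\Leftrightarrow$(4), (5)$\Leftrightarrow$(6) are indeed soft; just note that the restriction $m\ge3$ in (6) versus all $m$ in (5) is handled by subsampling.)
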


In what follows, we will need the following two subspaces of $\Tbin$. Let for each $N\in\N$,
\begin{equation}
\label{e:T2N}
	\Tbin^N:=\bset{(T,c,\mu)\in\Tbin}{\#\lf(T)=N\text{ and }\mu=\tfrac{1}{N}\sum_{u\in\lf(T)}\delta_u},
\end{equation}
and
\begin{equation}
\label{e:Taldous}
   \Taldous:=\bset{(T,c,\mu)\in\Tbin}{\at(\mu)=\emptyset}.
\end{equation}
The Aldous chain on $N$-cladograms is naturally defined on $\Tbin^N$: for $\smallx \in \Tbin^N$, there is a
unique (up to measure preserving tree isomorphism) minimal representative $(T,c,\mu)$ of $\smallx$
(i.e.\ no subset with the restrictions of $c$ and $\mu$ is an algebraic measure tree) with $2N-2$ vertices and
$2N-3$ edges. We identify $\smallx\in \Tbin^N$ with this minimal representative and interpret it as
``$N$-caldogram without labels'' with uniform distribution on the leaves. We define the Aldous chain on
$\Tbin^N$ in the same way as the one on $\Clad[N]$, via its generator $\Omega_N$ in \eqref{e:genN}.
With a slight abuse of notation, we use the same notation for the generators of the $\Tbin^N$-valued and of the $\Clad[N]$-valued chain. 
We will define the Aldous diffusion on the
space $\Taldous$ in view of the following approximation result:
\begin{proposition}[Approximations with $\Tbin^N$]\label{p:001}
Let\/ $\smallx\in \Tbin$. Then\/
$\smallx\in\mathbb{T}_2^{\mathrm{cont}}$ if and only if there exists for each\/
$N\in\N$ an\/ $\smallx_N\in\mathbb{T}_2^N$ such that\/ $\smallx_N\to\smallx$ in one (and thus all) of the
equivalent notions of convergence on\/ $\mathbb{T}_2$ given above.
\end{proposition}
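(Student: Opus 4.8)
The plan is to prove the two implications separately, using the equivalence of convergence notions from Proposition~\ref{P:conveq} and the characterization of $\Taldous$ as the non-atomic trees in \eqref{e:Taldous}. For the ``if'' direction, suppose $\smallx_N \to \smallx$ with $\smallx_N \in \Tbin^N$. I would show $\smallx \in \Taldous$, i.e.\ $\at(\mu) = \emptyset$. The key observation is that each $\smallx_N$ has the uniform measure $\mu_N = \tfrac1N\sum_{u\in\lf(T_N)}\delta_u$ on its $N$ leaves, so every atom of $\mu_N$ has mass exactly $\tfrac1N \to 0$. To transfer this to the limit, I would work through sample shape convergence (part (1) of Proposition~\ref{P:conveq}): if $\mu$ had an atom of mass $a > 0$ at a leaf, then with positive probability two independently $\mu$-sampled points would coincide, which forces, for the shape function, configurations where $m$ sampled points yield a cladogram with fewer than $m$ distinct leaves with probability bounded below uniformly in $m$. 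One checks that along $\smallx_N$ this coincidence probability is at most $\binom{m}{2}\tfrac1N \to 0$, contradicting convergence of the $m$-sample shape distribution. Hence $\mu$ is non-atomic and $\smallx \in \Taldous$.

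For the ``only if'' direction, which is the substantive one, suppose $\smallx = (T,c,\mu) \in \Taldous$, so $\mu$ is non-atomic. I must construct, for each $N$, an approximating $\smallx_N \in \Tbin^N$ with $\smallx_N \to \smallx$. The natural construction is \emph{sampling}: draw $U_1,\ldots,U_N$ independently from $\mu$, form the finite subtree $c(\{U_1,\ldots,U_N\}^3)$ they span, extract from it the minimal binary representative with $N$ leaves (the cladogram shape they generate), and equip it with the uniform measure on those $N$ leaves. Because $\mu$ is non-atomic the $U_i$ are almost surely distinct and in general position, so $\shape(U_1,\ldots,U_N)$ is almost surely a genuine $N$-cladogram and this yields a well-defined element $\smallx_N \in \Tbin^N$. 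I would then verify convergence $\smallx_N \to \smallx$ via sample shape convergence (parts (1)--(2) of Proposition~\ref{P:conveq}): for fixed $m$ and a target shape $\testtree \in \mlClad$, the $m$-sample shape distribution under $\mu_N$ is close to that under $\mu$, because sampling $m$ of the $N$ uniformly chosen leaves is, up to the negligible without-replacement correction of order $\tfrac{m^2}{N}$, the same as sampling $m$ points directly from $\mu$, and the shape generated by points inside a subtree agrees with the shape they generate in the ambient tree.

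The main obstacle is that this random construction produces random $\smallx_N$, whereas the statement asks for a deterministic approximating sequence. The cleanest fix is to prove that the random $\smallx_N$ converge to $\smallx$ \emph{in probability} (or in $L^1$) in the compact metrizable space $\Tbin$, and then extract a deterministic sequence by a diagonal/Borel--Cantelli argument: since $\Tbin$ is compact and hence the \ourmetric\ is bounded, convergence in probability along the full family lets one select, for each $N$, a realization (equivalently pass to a subsequence along which convergence is almost sure) and then relabel to obtain a deterministic sequence converging to $\smallx$. Here compactness of $\Tbin$ (cited after Proposition~\ref{P:conveq}) is essential, as it guarantees that the distributions of the $\smallx_N$ form a tight family so that almost-sure convergence along the sampling can be upgraded to the existence of a deterministic approximating sequence. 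I would organize the $L^1$-estimate around controlling, for each fixed test shape $\testtree$ and each $m$, the deviation $\Exp\bigl[|\Phi^{m,\testtree}(\smallx_N) - \Phi^{m,\testtree}(\smallx)|\bigr]$; bounding the empirical-versus-true $m$-sample shape distribution by a variance computation shows this tends to $0$, and since the countable family of such test functions is convergence determining on $\Taldous$ (as recalled in the excerpt), this gives convergence of $\smallx_N$ to $\smallx$ and completes the argument.
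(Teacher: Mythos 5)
Your proposal addresses a statement that this paper does not prove in-house: Proposition~\ref{p:001} is one of the results imported from \cite{LoehrWinter} (``All proofs can be found there''), so there is no internal proof to compare against. Judged on its own merits, your two-directional argument is correct, and the i.i.d.\ sampling construction is the natural route. Two remarks on the ``if'' direction: the phrase ``bounded below uniformly in $m$'' is a red herring --- it suffices to fix a single $m$, say $m=2$: if $\mu$ had an atom of mass $a>0$, then $\mu^{\otimes 2}\{u_1=u_2\}\ge a^2>0$, i.e.\ the non-injectively labelled shapes in $\mlClad[2]\setminus\Clad[2]$ would carry positive sample-shape mass, whereas along $\smallx_N\in\Tbin^N$ this mass is exactly $\tfrac1N\to0$; since Proposition~\ref{P:conveq}(2) gives convergence of the shape masses for every $\testtree\in\mlClad$, not only $\testtree\in\Clad$, this yields the contradiction cleanly. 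In the ``only if'' direction, two details deserve to be made explicit: (a) well-definedness of $\smallx_N\in\Tbin^N$ requires that the $N$ sampled points are a.s.\ distinct, avoid $\br(T)$, and that none coincides with the branch point of three others; this follows because order separability makes $\br(T)$ countable (every branch point is $c(d_1,d_2,d_3)$ for points $d_i$ of a countable dense set, one in each of three components), so non-atomicity of $\mu$ gives all three properties at once; (b) identifying the $m$-sample shape distribution of $\smallx_N$ with the empirical $U$-statistic of $\shape$ under $\mu$ uses projectivity of the shape function under subsampling --- the same consistency underlying the product formula in the proof of Lemma~\ref{l:polalgebra} --- which should be cited or isolated as a lemma, since sampled points may lie on paths between other sampled points when $\mu$ charges the skeleton.

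Your derandomization step is also essentially right, but tightness plays no role (the laws of the $\smallx_N$ are trivially tight since $\Tbin$ is compact). What you actually use is that the topology of $\Tbin$ is metrized by a bounded metric built from countably many shape functionals, e.g.\ $d(\smallx,\smally)=\sum_k 2^{-k}\bigl|\Phi_k(\smallx)-\Phi_k(\smally)\bigr|$ over an enumeration $(\Phi_k)_k$ of the $\Phi^{m,\testtree}$. Your variance bound of order $N^{-1}$ for each fixed $(m,\testtree)$ gives $\Phi^{m,\testtree}(\smallx_N)\to\Phi^{m,\testtree}(\smallx)$ in probability, hence $\Exp\bigl[d(\smallx_N,\smallx)\bigr]\to 0$ by dominated convergence, and for each $N$ there is a realization $\omega_N$ with $d\bigl(\smallx_N(\omega_N),\smallx\bigr)\le\Exp\bigl[d(\smallx_N,\smallx)\bigr]$, because a nonnegative random variable is at most its mean with positive probability. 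This produces the deterministic sequence directly, with no subsequence extraction or Borel--Cantelli argument needed.
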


\begin{proposition}[Compactness and metrizability]\label{P:compact}
$\Tbin$ is a compact, metrizable space.
Both\/ $\Tbin^N$ and\/ $\Taldous$ are closed subspaces of\/ $\mathbb{T}_2$, and thus compact as well.
\end{proposition}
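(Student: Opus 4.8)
\emph{Metrizability.} The bpdd-Gromov-weak topology is, by Definition~\ref{Def:001}, the topology induced by the map $\iota\colon\Tbin\to\M$, $(T,c,\mu)\mapsto(T,r_\mu,\mu)$, where $\M$ carries the Gromov-weak topology, which is metrized by the Gromov-Prohorov metric $\dGP$ \cite{GrevenPfaffelhuberWinter2009}. I would first observe that $\iota$ is injective: from $(T,r_\mu,\mu)$ one recovers the intervals $[x,y]=[x,y]_{r_\mu}$ and hence, by \eqref{e:gener}, the branch point map $c(x,y,z)$ as the unique element of $[x,y]\cap[x,z]\cap[y,z]$, so the equivalence class $\smallx$ is determined by $\iota(\smallx)$. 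Pulling back $\dGP$ along $\iota$ then gives a metric inducing the given topology, so $\Tbin$ is metrizable. Equivalently, by Proposition~\ref{P:conveq} the topology agrees with the subspace topology inherited from the compact metrizable product $\prod_{m\in\N}\mathcal P(\mlClad)$ under the sample shape embedding $\Psi\colon\smallx\mapsto\bigl(\mu^{\otimes m}\circ\shape^{-1}\bigr)_{m\in\N}$, which I will use for compactness.

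\emph{Compactness.} Since the product $\prod_m\mathcal P(\mlClad)$ is compact and $\Psi$ is a topological embedding, it suffices to show that $\Psi(\Tbin)$ is closed, and as everything is metrizable I would argue sequentially. Given $(\smallx_n)_n$ in $\Tbin$, the associated trees $(T_n,r_{\mu_n},\mu_n)$ have total length at most $3$ by \eqref{totallength}, hence diameter at most $1$ (as $r_\mu(x,y)=\nu([x,y])-\tfrac12\nu(\{x\})-\tfrac12\nu(\{y\})\le 1$) and admit $\varepsilon$-nets of size $O(1/\varepsilon)$ on their skeletons; this yields the tightness in the Gromov-weak relative-compactness criterion of \cite{GrevenPfaffelhuberWinter2009}, so along a subsequence $(T_n,r_{\mu_n},\mu_n)\to(T,r,\mu)$ in $\M$, and by continuity of the shape functionals (Proposition~\ref{P:conveq}) the shape distributions converge to those of $(T,r,\mu)$. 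It remains to exhibit $\smallx\in\Tbin$ with $\iota(\smallx)=(T,r,\mu)$.

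\emph{The structural step (main obstacle).} This is where I expect the real work to be: one must check that the conditions cutting out $\iota(\Tbin)$ inside $\M$ pass to the Gromov-weak limit. The leverage is that each of them is an almost-sure statement about finitely many $\mu$-sampled points, so it is governed by the convergent sample distance- and subtree-mass distributions of \eqref{e:008} and \eqref{e:009} rather than by one-sided Portmanteau estimates. Concretely, the median identity \eqref{e:gener} is preserved, so $c(x,y,z):=[x,y]\cap[x,z]\cap[y,z]$ turns $(T,r)$ into an algebraic tree; and both binarity ($\deg\le 3$) and the restriction $\at(\mu)\subseteq\lf(T)$ are encoded as the vanishing of nonnegative continuous functionals of the sampled subtree-mass configuration (detecting, respectively, a degree-$\ge 4$ branch point as a vanishing internal edge length in a four-point sample, and an interior atom as a deficit in the subtree masses around a sampled branch point). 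Since these functionals are identically zero along $(\smallx_n)_n$ and converge by \eqref{e:009}, they vanish in the limit; hence $\smallx:=(T,c,\mu)\in\Tbin$ and $\iota(\smallx)=(T,r,\mu)$. This proves $\Psi(\Tbin)$ closed, so $\Tbin$ is compact.

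\emph{The subspaces.} For $\Tbin^N$ from \eqref{e:T2N} I would note that it is a finite set: an element is determined up to equivalence by the isomorphism type of a binary tree with $N$ leaves (the uniform leaf measure being automatically preserved by tree isomorphisms), and there are only finitely many such types; a finite subset of the Hausdorff space $\Tbin$ is closed. For $\Taldous$ from \eqref{e:Taldous} I would use that, since atoms sit only at leaves, two independent $\mu$-samples coincide exactly when they hit a common atom, so $\at(\mu)=\emptyset$ holds iff for every $m$ the $m$-sample shape is almost surely an injectively labelled cladogram, i.e.\ $\mu^{\otimes m}\circ\shape^{-1}$ assigns mass $0$ to each class in $\mlClad\setminus\Clad$. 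These are finitely many coordinates of $\Psi(\smallx)$, each identically $0$ along a sequence in $\Taldous$, hence $0$ in the limit by \eqref{e:005}; thus $\Taldous$ is closed. Being closed subsets of the compact space $\Tbin$, both $\Tbin^N$ and $\Taldous$ are compact.
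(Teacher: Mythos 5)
Your metrizability sketch, the finiteness argument for $\Tbin^N$, and the closedness of $\Taldous$ are essentially sound, and for calibration you should know that the paper itself does not prove this proposition at all: it quotes it from \cite{LoehrWinter} (Proposition~2.32, Theorem~3, Corollary~5.21 there), where compactness of $\Tbin$ is obtained by a quite different device, namely a coding of binary algebraic measure trees by (sub-)triangulations of the circle, for which compactness is manifest. Two smaller caveats on the parts that work: injectivity of $\iota$ is itself a nontrivial result of \cite{LoehrWinter}, since the Gromov-weak class of $(T,r_\mu,\mu)$ only remembers $\supp\mu$ up to measure-preserving isometry, and the median $c(x,y,z)$ of three support points need not lie in $\supp\mu$, so one must first span an ambient tree before \eqref{e:gener} can be applied; and your $\Taldous$ argument is legitimate precisely because there the limit is already known to lie in $\Tbin$, so Proposition~\ref{P:conveq} applies and the coordinates $\mu^{\otimes m}\circ\shape[]^{-1}(\mathfrak{t})$, $\mathfrak{t}\in\mlClad\setminus\Clad$, are honestly continuous and vanish in the limit.

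The genuine gap is in your ``structural step'', and it is threefold. First, circularity: \eqref{e:008} and \eqref{e:009} are statements of Proposition~\ref{P:conveq} about sequences \emph{and limits inside} $\Tbin$; for a Gromov-weak limit $(T,r,\mu)\in\M$ not yet known to be of the form $\iota(\smallx)$, ``its'' subtree masses are not even defined until the algebraic structure has been constructed, so \eqref{e:009} cannot be invoked to transport the vanishing. Second, the detectors you posit cannot exist in the form claimed: the bad events --- four sampled points, pairwise distinct, whose two medians coincide (degree $\geq 4$), or a mass deficit around a sampled branch point (interior atom) --- are intersections of a closed with an open condition in the sampled configuration space, hence not open; a continuous $g\ge 0$ with $\{g>0\}$ equal to such an event would force the event to be open. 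Consequently $\int g\,\dx P_n=0$ for all $n$ cannot characterize membership, and the Portmanteau obstruction you claim to evade is exactly the one in force: for closed $F$, weak convergence only yields $P(F)\ge\limsup_n P_n(F)$, which is vacuous when $P_n(F)=0$, so null events along the sequence may acquire positive mass in the limit absent quantitative control. Third, even granting a limit algebraic tree $(T,c,\mu)$, your final claim $\iota(\smallx)=(T,r,\mu)$ silently requires the identity $r=r_\mu$, i.e.\ that the limiting metric coincides with the branch-point-distribution metric \eqref{e:metric} built from the limit's own $\nu=\mu^{\otimes 3}\circ c^{-1}$; without it, closedness of the image fails. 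This identity is where the rigidity of the bpdd geometry enters (it is the reason a four-star with four positive component masses is uniformly bpdd-far from \emph{all} binary trees, so no collapsing internal edge can occur), and establishing it, together with the construction of the limit tree, is the hard core of \cite[Theorem~3]{LoehrWinter} --- which is presumably why the present paper cites rather than reproves it.
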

To deal with the Aldous chain and diffusions, it is convenient to introduce the following set of test functions on
$\Tbin$.

\begin{definition}[Shape polynomials]\label{Def:002}
A \emph{shape polynomial} is a linear combination of functions $\Phi^{m,\mathfrak{t}}\colon \Tbin\to\R$ of the form
\begin{equation}
\label{s:009}
   \Phi^{m,\mathfrak{t}}(\smallx):=\mu^{\otimes m}\(\shape^{-1}(\mathfrak{t})\),
\end{equation}
where $\smallx=(T,c,\mu)$, $m\in\N$ and $\mathfrak{t}\in\mlClad$.
Let\/ $\sPol$ be the set of all shape polynomials.
\end{definition}

Apart from its combinatorial nature, the usfulness of shape polynomials stems from the fact that every
real-valued continuous function on $\Tbin$ can be approximated by them.

\begin{lemma}\label{l:polalgebra}
	The set\/ $\sPol$ of shape polynomials is a uniformly dense sub-algebra of\/ $\C(\Tbin)$.
\end{lemma}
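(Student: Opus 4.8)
The plan is to invoke the Stone--Weierstrass theorem. Since $\Tbin$ is compact and metrizable, hence a compact Hausdorff space (Proposition~\ref{P:compact}), it suffices to verify that $\sPol$ is a sub-algebra of $\C(\Tbin)$ that contains the constants and separates points; uniform density is then automatic. That $\sPol$ is a linear subspace of $\C(\Tbin)$ is immediate: each generator $\Phi^{m,\testtree}$ lies in $\C_b(\Tbin)$ by the cited result of \cite{LoehrWinter} (equivalently, continuity is the content of the implication from bpdd-Gromov-weak convergence to condition~(2) in Proposition~\ref{P:conveq}), and $\sPol$ is their linear span by Definition~\ref{Def:002}. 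For the constants I would note that, for each fixed $m$, the index set $\mlClad$ is finite and $\sum_{\testtree\in\mlClad}\Phi^{m,\testtree}\equiv 1$ is the total $\mu^{\otimes m}$-mass (the branch point set is $\mu$-null on $\Tbin$, so $\shape(\uu)$ is $\mu^{\otimes m}$-a.s.\ defined); hence $1\in\sPol$.

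The substantial point is multiplicative closure. I would fix two generators $\Phi^{m,\testtree}$ and $\Phi^{m',\testtree'}$ and use Fubini to rewrite their product as $\mu^{\otimes(m+m')}$ of the event that the first $m$ sampled points have shape $\testtree$ and the last $m'$ have shape $\testtree'$. The key combinatorial fact is that the shape of a sub-collection of sampled points is a \emph{deterministic} function of the shape of the whole collection: concretely, $\shape(u_1,\dots,u_m)$ equals the restriction of $\shape(u_1,\dots,u_{m+m'})$ to the label set $\{1,\dots,m\}$, obtained by spanning the relevant leaves and suppressing the resulting degree-two vertices. This follows directly from Definition~\ref{Def:shapeclado}, because the branch points $c(u_i,u_j,u_k)$ with indices in $\{1,\dots,m\}$ are the same whether read off the full shape or computed in $(T,c)$ directly. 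Consequently the product event partitions, over the finitely many $(m+m')$-labelled cladograms $\tilde{\testtree}$ whose two restrictions equal $\testtree$ and $\testtree'$, into the disjoint events $\{\shape(\uu)=\tilde{\testtree}\}$, yielding $\Phi^{m,\testtree}\cdot\Phi^{m',\testtree'}=\sum_{\tilde{\testtree}}\Phi^{m+m',\tilde{\testtree}}\in\sPol$. Extending bilinearly shows that $\sPol$ is closed under products. I expect this restriction/marginalization bookkeeping---checking that the decomposition is both exact and exhaustive---to be the main (though routine) obstacle.

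For separation of points I would argue by uniqueness of limits. Suppose $\Phi(\smallx)=\Phi(\smally)$ for all $\Phi\in\sPol$; then the two elements have identical $m$-sample shape distributions for every $m$. Applying this to the constant sequence $\smallx_N:=\smallx$, condition~(2) of Proposition~\ref{P:conveq} is satisfied with limit $\smally$, so by the equivalence of (2) and (3) there the constant sequence converges bpdd-Gromov-weakly to $\smally$. Since it trivially also converges to $\smallx$ and $\Tbin$ is Hausdorff (Proposition~\ref{P:compact}), limits are unique and $\smallx=\smally$. Thus distinct points of $\Tbin$ are separated by $\sPol$.

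Combining these four points, Stone--Weierstrass applies and gives that $\sPol$ is uniformly dense in $\C(\Tbin)$; together with the sub-algebra property established above, this is exactly the assertion of the lemma.
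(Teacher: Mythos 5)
Your proof is correct and follows essentially the same route as the paper: Stone--Weierstrass on the compact space $\Tbin$, continuity and point separation via Propositions~\ref{P:conveq} and \ref{P:compact}, and multiplicative closure through exactly the same restriction/marginalization decomposition (the paper phrases it with the projections $\pi_{n+m,m}$ and $\tilde\pi_{n+m,n}$). Your explicit verification that the constants lie in $\sPol$ via $\sum_{\testtree\in\mlClad}\Phi^{m,\testtree}\equiv 1$, and your spelled-out separation argument using constant sequences and uniqueness of limits, are sound elaborations of steps the paper leaves tacit.
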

\begin{proof}
	It is immediate from Propositions~\ref{P:conveq} and \ref{P:compact} that $\sPol$ is contained in the
	space $\C(\Tbin)$ of continuous functions and separates the points of $\Tbin$.
	To see that $\sPol$ is multiplicatively closed, consider for $k,m\in\N$, $k<m$ the projection $\pi_{m,k} \colon
	\mlClad \to \mlClad[k]$, mapping $\testtree=(C,c,\zeta)\in \mlClad$ to the subcladogram spanned by
	$\zeta(1),\ldots,\zeta(k)$, as well as the projection $\tilde\pi_{m,k} \colon \mlClad\to\mlClad[m-k]$,
	mapping to the subcladogram spanned by $\zeta(k+1),\ldots,\zeta(m)$ (relabelled to have labels in
	$\{1,\ldots,m-k\}$). Because $\mu$ is a probability and the product measure is used in \eqref{s:009}, 
	we have for $m,n\in\N$, $\testtree\in\mlClad$, $\tilde\testtree\in\mlClad[n]$
\begin{equation}
	\Phi^{m,\testtree} \cdot \Phi^{n,\tilde\testtree}
	  = \sum_{\testtree'\in\pi_{n+m,m}^{-1}(\testtree) \cap \tilde\pi_{n+m,n}^{-1}(\tilde\testtree)}
			\Phi^{m+n,\testtree'} \in \sPol.
\end{equation}
	Because $\Tbin$ is compact by Proposition~\ref{P:compact}, $\sPol$ is dense in $\C(\Tbin)$ by the
	Stone-Weierstrass theorem.
\end{proof}

Consider $m\in\N$ and $\testtree=(C,c,\zeta)\in\mlClad \setminus \Clad$, i.e.\ there is at least one leaf in
the \nclad\ $\testtree$ with multiple labels. Then $\shape(u_1,\ldots,u_m)=\testtree$ implies that the $u_1,\ldots,u_m$
are not distinct. Hence $\Phi^{m,\testtree}(\smallx) = 0$ for all $\smallx\in\Taldous$. This is in fact the
reason why we restricted the domain of the pre-generator of the Aldous diffusion $\CD(\AldGen)$ to shape
polynomials using $m$\nbd cladograms instead of $m$-labelled cladograms (see~\eqref{25_1}).
Note that the set of restrictions to $\Taldous$ of functions in $\CD(\AldGen)$ is dense in $\C(\Taldous)$.

\section{Convergence of generators, tightness and existence}
\label{S:generator}
In this section we prepare the proofs of our main results by showing the uniform convergence of the
generators of the discrete chains to the pre-generator $(\CD(\AldGen),\AldGen)$, and deduce tightness of the
Aldous chains (provided tightness of initial conditions) as well as existence of solutions of the limiting
martingale problem by general theory. We also obtain continuous paths of all limit processes.

A first simple observation about the pre-generator is that it maps $\CD(\AldGen)$ into itself.
\begin{lemma}\label{l:almostFeller}
	For every\/ $\Phi\in \CD(\AldGen)$, we have\/ $\AldGen\Phi \in \CD(\AldGen)$. In particular,
	$(\Phi, \AldGen\Phi) \in \C(\Tbin)\times \C(\Tbin)$.
\end{lemma}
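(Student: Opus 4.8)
The plan is to exploit two structural facts: $\AldGen$ is linear by construction, and the Aldous generator $\Omega_m$ lives on the \emph{finite} set $\Clad$. Together these force $\AldGen\Phi^{m,\testtree}$ to be nothing but another finite linear combination of shape polynomials of the same degree $m$. Since $\CD(\AldGen)$ is by \eqref{25_1} exactly the linear span of the $\Phi^{m,\testtree}$ with $m\in\N$ and $\testtree\in\Clad$, by linearity of $\AldGen$ it suffices to treat a single generator $\Phi^{m,\testtree}$ and show it is mapped into this span.

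First I would rewrite $\Omega_m\one_\testtree$. As $\Clad$ is finite and $\Omega_m$ maps functions on $\Clad$ to functions on $\Clad$, the function $\Omega_m\one_\testtree\colon\Clad\to\R$ is automatically a finite linear combination of indicators,
\[
   \Omega_m\one_\testtree \;=\; \sum_{\mathfrak{s}\in\Clad} c_{\testtree,\mathfrak{s}}\,\one_{\mathfrak{s}},
   \qquad
   c_{\testtree,\mathfrak{s}} = \#\bset{(u,e)}{\mathfrak{s}^{(u,e)}=\testtree} - m(2m-3)\,\one_{\{\mathfrak{s}=\testtree\}},
\]
where the coefficients are read off directly from \eqref{e:genN} together with the fact that an $m$-cladogram has exactly $m(2m-3)$ leaf--edge pairs $(u,e)$. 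I would then insert this expansion into the defining formula \eqref{e:set3} and interchange the (finite) sum with the integral, obtaining
\[
   \AldGen\Phi^{m,\testtree}(T,c,\mu)
   = \sum_{\mathfrak{s}\in\Clad} c_{\testtree,\mathfrak{s}}
       \int \one_{\mathfrak{s}}\(\shape(\uu)\)\,\mu^{\otimes m}(\dx\uu)
   = \sum_{\mathfrak{s}\in\Clad} c_{\testtree,\mathfrak{s}}\,\Phi^{m,\mathfrak{s}}(T,c,\mu).
\]
This exhibits $\AldGen\Phi^{m,\testtree}$ as a linear combination of shape polynomials indexed by $m$-cladograms, hence as an element of $\CD(\AldGen)$; extending back by linearity to all of $\CD(\AldGen)$ settles the first assertion. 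The second assertion is then immediate, since every element of $\CD(\AldGen)$ is a shape polynomial and therefore lies in $\C(\Tbin)$ by the cited results of \cite{LoehrWinter} (see Proposition~\ref{P:conveq}), and $\AldGen\Phi$ has just been shown to be a shape polynomial as well, so $(\Phi,\AldGen\Phi)\in\C(\Tbin)\times\C(\Tbin)$.

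The one point requiring genuine care — and the only place I expect any friction — is that on $\Tbin$ the measure $\mu$ may carry atoms at leaves, so a sampled configuration $\uu$ can yield a shape $\shape(\uu)\in\mlClad\setminus\Clad$ with coinciding labels, on which $\Omega_m\one_\testtree$ is not defined a priori. The resolution is to note that each indicator $\one_{\mathfrak{s}}$ with $\mathfrak{s}\in\Clad$ vanishes on $\mlClad\setminus\Clad$; hence extending $\Omega_m\one_\testtree$ to $\mlClad$ by the same linear combination (equivalently, by zero off $\Clad$) is consistent, and the integral identity above is unaffected by the atomic part of $\mu$. Everything beyond this is routine bookkeeping.
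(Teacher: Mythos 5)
Your proof is correct and takes essentially the same route as the paper, whose one-line argument simply asserts that $\AldGen\Phi$ is again a shape polynomial (hence continuous by definition of sample shape convergence); your explicit expansion $\Omega_m\one_\testtree=\sum_{\mathfrak{s}\in\Clad}c_{\testtree,\mathfrak{s}}\one_{\mathfrak{s}}$ with the stated coefficients just makes that assertion concrete. Your closing observation—that $\Omega_m\one_\testtree$ must be extended (by zero, since each $\one_{\mathfrak{s}}$ with $\mathfrak{s}\in\Clad$ vanishes on $\mlClad\setminus\Clad$) so that \eqref{e:set3} is well-defined on all of $\Tbin$, where atoms permit coinciding samples—is a detail the paper leaves implicit, and you resolve it in the natural, consistent way.
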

\begin{proof}
	Both $\Phi$ and $\AldGen\Phi$ are shape polynomials, hence continuous by definition of sample shape
	convergence.
\end{proof}

For $N\in\N$, recall from \eqref{e:genN} the generator $\Omega_N$ of the Aldous chain on $N$-cladograms
and from \eqref{e:T2N} the space $\mathbb{T}_2^N$
of algebraic measure trees with $N$ leaves and uniform distribution on the leaves.

\begin{proposition}[Convergence of generators]\label{P:generator}
For all\/ $\Phi\in{\mathcal D}(\AldGen)$, we have
\begin{equation}\label{e:011}
  \lim_{N\to\infty}\sup_{\smallx\in\mathbb{T}_2^{N}}\big|\Omega_N\Phi(\smallx)-\AldGen\Phi(\smallx)\big|
  = 0.
\end{equation}
\end{proposition}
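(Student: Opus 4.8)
The plan is to reduce the uniform convergence \eqref{e:011} to a pointwise combinatorial estimate that is uniform in $\smallx\in\Tbin^N$ by using the explicit formula \eqref{e:set3} for $\AldGen$ and comparing it with $\Omega_N$ applied to a shape polynomial. By linearity it suffices to treat a single generator $\Phi=\Phi^{m,\testtree}$ of the form \eqref{e:set1}. First I would write out both sides explicitly. On the one hand, by \eqref{e:set3},
\begin{equation*}
   \AldGen\Phi^{m,\testtree}(T,c,\mu)
   = \int \Omega_m\one_{\testtree}\(\shape(\uu)\)\,\mu^{\otimes m}(\dx\uu),
\end{equation*}
where $\Omega_m$ is the finite-dimensional Aldous generator \eqref{e:genN} acting on the $m$ sampled points. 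On the other hand, $\Omega_N\Phi^{m,\testtree}(\smallx)$ is, by \eqref{e:genN}, a sum over leaf-edge pairs $(w,e)$ of the $N$-cladogram $\smallx$ of the increments $\Phi^{m,\testtree}(\smallx^{(w,e)})-\Phi^{m,\testtree}(\smallx)$. The key idea is that an Aldous move on the big tree $\smallx$ affects the sampled shape $\shape(\uu)$ only if one of the $m$ sampled leaves $u_1,\dots,u_m$ sits in the subtree that gets moved, and in that case the induced change on $\shape(\uu)$ is itself an Aldous move on the $m$-sample. So the plan is to rewrite $\Omega_N\Phi^{m,\testtree}(\smallx)$ as an integral against $\mu^{\otimes m}$ of a sum of sample-level moves, making it directly comparable to $\AldGen\Phi^{m,\testtree}$.

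Concretely, I would fix the $m$ sample points $\uu$ and classify the $N(2N-3)$ leaf-edge moves $(w,e)$ of $\smallx$ according to how they act on the induced cladogram $\shape(\uu)$. A move changes $\shape(\uu)$ precisely when the detached leaf $w$ together with the split edge $e$ corresponds, after projecting onto the sample, to one of the $m(2m-3)$ sample-level pairs $(u_i, \text{edge of }\shape(\uu))$; the number of big-tree moves realizing a given sample-level move is governed by the masses $\mu$ of the relevant subtrees, i.e.\ by how many of the $N$ leaves lie in each component. Because $\mu$ is uniform on $N$ leaves, these counts are $N$ times the corresponding $\mu$-masses. The plan is to show that summing over all $N(2N-3)$ big moves and rescaling reproduces, up to a relative error of order $O(1/N)$ uniform in $\smallx$ and $\uu$, exactly the sample-level expression $\Omega_m\one_{\testtree}(\shape(\uu))$; moves that detach a leaf not ancestral to any sample point, or that reattach within the same sample-component, leave $\shape(\uu)$ unchanged and contribute nothing, while the ``boundary'' moves that detach or reattach near a sampled leaf contribute the $O(1/N)$ discrepancy. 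Integrating against $\mu^{\otimes m}(\dx\uu)$ and using $|\one_{\testtree}|\le 1$ then yields a bound of the form $\sup_{\smallx}|\Omega_N\Phi^{m,\testtree}(\smallx)-\AldGen\Phi^{m,\testtree}(\smallx)|\le C_m/N$, which is uniform in $\smallx$ and tends to $0$.

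The main obstacle, and where the real work lies, is the bookkeeping of this correspondence: one must verify that every Aldous move on $\smallx$ which alters $\shape(\uu)$ projects to a legitimate Aldous move on the $m$-sample cladogram, count with the correct multiplicities (including the rate factor $N(2N-3)$ versus $m(2m-3)$ and the time-scale normalization built into $\AldGen$), and carefully control the discrepancy terms arising when a sampled leaf coincides with or is adjacent to the detached leaf or the split edge. The uniformity over $\smallx\in\Tbin^N$ is automatic once the error is expressed purely through $\mu$-masses bounded by $1$ and a combinatorial count of boundary cases, since the estimate $C_m/N$ depends only on $m$ and not on the shape of $\smallx$. I would double-check that the normalization in \eqref{e:set3}, where $\Omega_m$ already carries the ``correct'' limiting rate, matches the large-$N$ rate $N(2N-3)$ of $\Omega_N$ after the $\frac1N$-scaling implicit in sampling, as a constant mismatch here would be fatal; getting this scaling identity exactly right is the crux of the argument.
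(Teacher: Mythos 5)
Your strategy is in substance the same as the paper's: reduce by linearity to $\Phi=\Phi^{m,\testtree}$, observe that an Aldous move on the big tree changes $\shape(\uu)$ only when the detached leaf is one of the sampled coordinates, and convert the multiplicities of reattachment positions into $\mu$\nbd masses. The paper merely organizes this bookkeeping differently: it realizes the pre- and post-move trees on a common extended tree $(\Tb,\cb)$ and expands the difference $(\mu+\eps\delta_{y_e}-\eps\delta_x)^{\otimes m}-\mu^{\otimes m}$ as in \eqref{e:014}, which is exactly your move-classification written in measure-theoretic form. So the route is right; the problem lies in your error accounting.

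It is not true that everything outside the clean one-leaf correspondence contributes only $O(1/N)$. For fixed $\uu$, the inner sum over the $N(2N-3)$ moves produces terms of order $N$ whose cancellation must be exact, and beneath them terms of order \emph{one} that belong to the limit, not to the error. Concretely: (i) reattachment positions are counted by \emph{edges}, not leaves, and the conversion rests on the exact binary identity \eqref{e:018}, namely $\#\{\text{edges over an interval}\}=2\,\#\{\text{leaves over it}\}+1$; the factor $2$ (your proposal says the counts are ``$N$ times the $\mu$-masses'', but they are $\approx 2N$ times) produces the exact cancellation of the order-$N$ terms against the negative part $m(2N-3)\Phi^{m,\testtree}$, leaving $3m\Phi^{m,\testtree}$, while the ``$+1$'' produces the terms $\Phi^{m-1,\tk}$, which via \eqref{e:020} and $\#\edge(\tk)=2m-5$ assemble into $\int\Omega_m\one_\testtree\circ\shape\,\dx\mu^{\otimes m}$. (ii) The configurations in which \emph{two} sampled coordinates interact with a single move (in your coupling: $u_j$ coinciding with the detached leaf $u_i$; in the paper's expansion, the $\eps^2$ cross terms $B_{k,j}$ of \eqref{e:Bk}) carry probability $O(\eps)$ per sample but multiply counts of order $N$, hence contribute at order one: by \eqref{e:B}, $B_{k,j}=2\Phi^{m,\testtree}+O(\eps)$, and the resulting $-2m(m-1)\Phi^{m,\testtree}$ is indispensable for recovering the diagonal rate $-m(2m-3)$ of $\Omega_m$. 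If, as your write-up proposes, these ``boundary'' contributions are discarded as $O(1/N)$ discrepancies, the computed limit disagrees with $\AldGen\Phi^{m,\testtree}$ by an order-one amount. Your instinct that matching the rates of $\Omega_N$ and $\Omega_m$ is the crux is correct, but verifying it requires carrying \emph{two} orders of the expansion exactly — only after the order-$N$ cancellation and the order-one assembly are both done exactly is the remainder $O(\eps)$, uniformly in $\smallx\in\Tbin^N$, as in the paper's final bound $7m(m-1)\eps$ in \eqref{e:Omega2}.
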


\begin{proof}
Consider $\Phi \in \CD(\AldGen)$. By linearity, we may assume w.l.o.g.\ that $\Phi=\Phi^{m,\testtree}$ for some
$m\in\N$ and $\testtree\in\Clad$. In particular, $\mathfrak{t}$ is such that no leaf has multiple labels, and
consequently for $\uu\in T^m$, $\shape(\uu)=\mathfrak{t}$ implies that $u_1, ..., u_m$ are
distinct.

Fix $N\in\N$ and $\smallx \in \Tbin^N$, and let $(T,c,\mu)$ be the unique (up to measure preserving tree
isomorphism) minimal representative (i.e.\ $\#T=2N-2$). Then $\#\lf(T)=N$ and $\#\edge(T)=2N-3$).
In the following we abbreviate the inverse numbers of leaves and edges respectively by
\begin{equation}
\label{e:epsN}
   \eps=\eps_N:=\tfrac1N,\quad\mbox{ and }\quad
   \delta=\delta_N:=\tfrac{1}{2N-3}.
\end{equation}
We extend the algebraic tree to allow for potential new branch points (due to inserting an edge) and new leaves.
To this end, for every edge $e\in\edge(T)$, we introduce two additional points $x_e,y_e$. Informally, $x_e$ is
inserted in the middle of $e$, and $y_e$ is attached to $x_e$ as a leaf. More precisely, we consider
\begin{equation}
\label{e:Tbar}
   \Tb=T \uplus\biguplus_{e\in \edge(T)}\{x_e,y_e\},
\end{equation}
and extend $c$ to $\cb\colon \Tb^3\to \Tb$ which is uniquely defined by the following.
$(\Tb,\cb)$ is an algebraic tree such that for $e=\{u,v\}\in\edge(T)$, we have $x_e\in [u,v]$ in $(\Tb,\cb)$, and
\begin{equation}
\label{e:cbar}
   \cb(y_e,x_e,z)=x_e \quad\forall z\in \Tb\setminus\{y_e\}.
\end{equation}
The construction is illustrated in Figure~\ref{f:Tbar}. Note that $(\Tb,\cb,\mu)$ is a binary algebraic measure
tree equivalent to $(T,c,\mu)$.
\comment{\xymatrixrowsep{0.5pc}
\xymatdoublefig[0.5pc]{f:Tbar}{
	\node\xyedge[ddrr] &&                  && \node\xyedge[dd] &&       && \node \\\\
	                   && \node\xyedge[rr] && \node\xyedge[rr] && \node\xyedge[uurr] && \\\\
	\node\xyedge[uurr]^{e} &&                  &&                  &&      && \node\xyedge[uull]
}{
	\node\xyedge[dr] && \Bnode\xyedge[dl]  && \node\xyedge[d] &&       && \node \\
	& \Bnode\xyedge[dr] && &\Bnode\xyedge[d]&\Bnode\xyedge[l] &&\Bnode\xyedge[ur]\xyedge[dr] & \\
	  \Bnode && \node\xyedge[r] &\Bnode\xyedge[r]& \node\xyedge[r] &\Bnode\xyedge[r]& \node\xyedge[ur] &&\Bnode \\
	& \Bnode\xyedge[ur]\xyedge[ul]^(1){y_e} &&\Bnode\xyedge[u] && \Bnode\xyedge[u]&& \Bnode\xyedge[ul] & \\
	\node\xyedge[ur]_(1){x_e} &&                  &&                  && \Bnode\xyedge[ur] && \node\xyedge[ul]
}{A finite algebraic tree $(T,c)$ and the extended tree $(\Tb,\cb)$.}}
{\xymatrixrowsep{0.5pc}\xymatrixcolsep{0.72pc}
\xymatfig{f:Tbar}{
	\node\xyedge[ddrr] &&                  && \node\xyedge[dd] &&       && \node 		&&&&	\node\xyedge[dr] && \Bnode\xyedge[dl]  && \node\xyedge[d] &&       && \node \\
                           &&&&&&&&                                                             &&&&        & \Bnode\xyedge[dr] && &\Bnode\xyedge[d]&\Bnode\xyedge[l] &&\Bnode\xyedge[ur]\xyedge[dr] & \\
	                   && \node\xyedge[rr] && \node\xyedge[rr] && \node\xyedge[uurr] &&     &&&&          \Bnode && \node\xyedge[r] &\Bnode\xyedge[r]& \node\xyedge[r] &\Bnode\xyedge[r]& \node\xyedge[ur] &&\Bnode \\
                           &&&&&&&&                                                             &&&&        & \Bnode\xyedge[ur]\xyedge[ul]^(1){y_e} &&\Bnode\xyedge[u] && \Bnode\xyedge[u]&& \Bnode\xyedge[ul] & \\
	\node\xyedge[uurr]^{e} &&              &&                  &&      && \node\xyedge[uull]&&&&        \node\xyedge[ur]_(1){x_e} &&                  &&                  && \Bnode\xyedge[ur] && \node\xyedge[ul]
}{A finite algebraic tree $(T,c)$ and the extended tree $(\Tb,\cb)$.}}

For $k\in\{1,...,m\}$ and $x\in \Tb$, let $\theta_{k,x}\colon T^m\to \Tb^m$ be the \emph{replacement operator} which
replaces the $k^{\mathrm{th}}$-coordinate by $x$, i.e.,
\begin{equation}
\label{e:thetakx}
	\theta_{k,x}(u_1,...,u_m) = (u_1,...,u_{k-1},x,u_{k+1},..., u_m).
\end{equation}
For $z=(x,e)\in \lf(T)\times\edge(T)$, let
\begin{equation}
\label{e:012}
   \smallx_z:=(\Tb,\cb,\mu+\eps\delta_{y_e}-\eps\delta_x)
\end{equation}
be the binary algebraic measure tree after the Aldous move with $z$.
The difference between sampling with the new and old measure is given by
\begin{equation}\label{e:014}
\begin{aligned}
  \MoveEqLeft \(\mu+\eps\delta_{y_e}-\eps\delta_x\)^{\otimes m}-\mu^{\otimes m} \\
    &= \begin{aligned}[t]
 	& \eps\sum_{k=1}^m\mu^{\otimes (k-1)}\otimes\(\delta_{y_e}-\delta_x\)\otimes\mu^{\otimes (m-k)} \\
 	& +\eps^2\sum_{1\le k<j\le m}\mu^{\otimes (k-1)}\otimes\(\delta_{y_e}-\delta_x\)
		\otimes\mu^{\otimes (j-k-1)}\otimes\(\delta_{y_e}-\delta_x\)\otimes\mu^{\otimes (m-j)} + \tilde{\mu}
 	\end{aligned}\\
    &= \eps\sum_{k=1}^m\(\mu^{\otimes m}\circ\theta^{-1}_{k,y_e}-\mu^{\otimes m} \circ \theta^{-1}_{k,x}\)
    	-\eps^2\sum_{j,k=1,\,j\ne k}^m\mu^{\otimes m}\circ\theta^{-1}_{k,y_e}\circ\theta^{-1}_{j,x}+\tilde{\mu}',
\end{aligned}
\end{equation}
where $\mut, \mut'$ are signed measures on $\Tb^m$ vanishing on $\set{(u_1,\ldots,u_m)}{u_1,\ldots,u_m \text{ distinct}}$.
Thus
\begin{equation}
\label{e:Omega0}
\begin{aligned}
	\Omega_N \Phi^{m,\mathfrak{t}}(\smallx)
  &=
    \sum_{z\in\lf(T)\times\edge(T)}\(\Phi^{m,\mathfrak{t}}(\smallx_z)-\Phi^{m,\mathfrak{t}}(\smallx)\)
		= \sum_{k=1}^m A_k - \!\sum_{j,k=1,\, j\ne k}^m B_{k,j},
\end{aligned}
\end{equation}
with
\begin{align}
\label{e:Ak}
A_k &:= \eps \sum_{(x,e) \in \lf(T)\times\edge(T)}
	\int_{T^m}\mu^{\otimes m}(\mathrm{d}\uu)\,\Bigl(
	     \one_\mathfrak{t}\(\shape[(\Tb,\cb)](\theta_{k,y_e}\uu)\)
	    -\one_\mathfrak{t}\(\shape[(\Tb,\cb)](\theta_{k,x}\uu)\) \Bigr),\\
\intertext{and}
\label{e:Bk}
	B_{k,j} &:= \eps^2 \sum_{(x,e) \in \lf(T)\times\edge(T)} \int_{T^m}\mu^{\otimes m}
		(\mathrm{d}\uu)\,\one_\mathfrak{t}\(\mathfrak{s}_{(\Tb,\cb)}(\theta_{k,y_e}\circ\theta_{j,x}\uu)\).
\end{align}
We use the notation $\mathfrak{t}_{\wedge k}\in \Clad[m-1]$ for the $(m-1)$-cladogram obtained from $\mathfrak{t}$ by deleting the leaf with
label $k$ (and relabelling the labels $j>k$ to $j-1$), i.e., if $\mathfrak{t}=\mathfrak{s}_{(\Tb,\cb)}(\uu)$, then $\mathfrak{t}_{\wedge k}=\mathfrak{s}_{(\Tb,\cb)}(\uu_{\wedge k})$ with $\uu_{\wedge k}:= (u_1,...,u_{k-1},u_{k+1}, ..., u_m)$.
Furthermore, for $\uu\in T^m$, we define
\begin{equation}
\label{e:Etk}
	E_{\mathfrak{t},k}(\uu) := \bset{v \in \Tb}{\mathfrak{s}_{(\Tb,\cb)}(\theta_{k,v}\uu)=\mathfrak{t}}.
\end{equation}
Note that $E_{\mathfrak{t},k}(\uu)$ does not depend on $u_k$, contains no $u_j$ for $j\ne k$, and that
$E_{\mathfrak{t},k}(\uu)\ne \emptyset$ only if $\shape(\uu_{\land
k}) = \mathfrak{t}_{\wedge k}$. In this case, $E_{\mathfrak{t},k}(\uu)$ ``corresponds to'' an edge of $\mathfrak{t}_{\wedge k}$. Let $\ell:=\delta\sum_{e\in\edge(T)}\delta_{y_e}$ be the uniform
distribution on $\set{y_e}{e\in\edge(T)}$.
By Fubini's theorem and using that $\eps\delta \sum_{(x,e)\in\lf\times\edge}\delta_{x} \otimes
\delta_{y_e}=\mu\otimes\ell$, we obtain
\begin{equation}
\label{e:A}
\begin{aligned}
	A_k &= \delta^{-1} \int_{\Tb^m}\mu^{\otimes m}(\mathrm{d}\uu)\(\ell(E_{\mathfrak{t},k}(\uu))-
\mu(E_{\mathfrak{t},k}(\uu))\)
\\
	    &= \int_{T^m}\mu^{\otimes m}(\mathrm{d}\uu)\one_{\tk}\((\mathfrak{s}_{(\Tb,\cb)}\uk)\) \cdot \(3\mu(E_{\mathfrak{t},k}(\uu)) + 1\)
\\
	    &= 3\Phi^{m,\mathfrak{t}}(\smallx) + \Phi^{m-1,\tk}(\smallx),
\end{aligned}
\end{equation}
where we have used in the second step that, because $(T,c)$ is binary,
\begin{equation}\label{e:018}
   \#\bset{e\in \edge(T)}{c(y_e,z,z')\in(z,z')}  =  2\#\bset{x\in \lf(T)}{c(x,z,z')\in(z,z')}+1
\end{equation}
for every $z,z'\in T$, and hence $\delta^{-1}\ell(\Etk)= 2N\mu(\Etk) + 1$ if $\shape[(\Tb,\cb)](\uk)=\tk$.
Similarly,
\begin{equation}
\begin{aligned}\label{e:B}
	B_{k,j}
 &=
    \tfrac\eps\delta\int_{\Tb}\ell(\mathrm{d}y)\int_{T}\mu(\mathrm{d}x)\,\int_{T^m}\mu^{\otimes m}(\mathrm{d}\uu)\,
    \one_{\mathfrak{t}}\(\shape[(\Tb,\cb)]\circ\theta_{k,y}\circ\theta_{j,x}(\uu)\)
  \\
 &=
    \tfrac\eps\delta \Phi^{m,\mathfrak{t}}(\smallx) + \eps A_k
    \\
    &= 2\Phi^{m,\mathfrak{t}}(\smallx) + \eps A_k+3\eps \Phi^{m,\mathfrak{t}}(\smallx).
\end{aligned}
\end{equation}
Combining \eqref{e:Omega0}, \eqref{e:A} and \eqref{e:B}, we obtain that
\begin{equation}\label{e:Omega1}
\begin{aligned}
	&\Omega_N \Phi^{m,\mathfrak{t}}(\smallx)
\\
 &=
   \sum_{k=1}^m \Phi^{m-1,\tk }(\smallx) + \(3m - 2m(m-1)\)\Phi^{m,\mathfrak{t}}(\smallx)-\eps(m-1)\sum_{k=1}^mA_k-3\eps m(m-1)\Phi^{m,\mathfrak{t}}(\smallx)
      \\
 &=
   \sum_{k=1}^m\Phi^{m-1,\tk }(\smallx)-m(2m-5)\Phi^{m,\mathfrak{t}}(\smallx)-\eps(m-1)\sum_{k=1}^m\Phi^{m-1,\tk }(\smallx)-6\eps m(m-1)\Phi^{m,\mathfrak{t}}(\smallx).
\end{aligned}
\end{equation}

For an edge $e$ of $\tk$, denote by $\tke$ the cladogram obtained by inserting a leaf labelled $k$ in $\tk$ at
the edge $e$ (and relabelling the
labels $j\ge k$ to $j+1$). In particular, $\tke$ is the cladogram obtained from $\testtree$ by
the Aldous move $(k, e)$. For $\uu\in T^m$, we have $\shape(\uk)=\tk$ if and only if there is an edge $e$ of $\tk$ such that
$\shape(\uu)=\tke$, and this $e$ is unique. Hence,
\begin{equation}
\label{e:020}
\begin{aligned}
    \sum_{k=1}^m\Phi^{m-1,\tk }(\smallx)
    &=
    \int_{T^m}\mu^{\otimes m}(\mathrm{d}\uu)\,\sum_{k=1}^m\one_{\tk} \(\shape(\uk)\)
    \\
    &= \int_{T^m}\mu^{\otimes m}(\mathrm{d}\uu)\,
    	\sum_{k=1}^m \sum_{e\in \edge(\tk)} \one_{\tke} \(\shape(\uu)\)
\\
    &= \int_{T^m}\mu^{\otimes m}(\mathrm{d}\uu)\, \Omega_m\one_\testtree\(\shape(\uu)\)
    	+ m\,\#\edge(\tk)\,\Phi^{m,\testtree}(\smallx)
.
\end{aligned}
\end{equation}
Inserting this into \eqref{e:Omega1} and using that $\#\edge(\tk) = 2m-5$, we see that

\begin{equation}
\label{e:Omega2}
	\bigl|\Omega_N\Phi^{m,\mathfrak{t}}(\smallx)-\int_{T^m}
		(\Omega_m\one_\mathfrak{t})\circ \shape \,\dx\mu^{\otimes m}\bigr|
    \le 7 m(m-1) \eps,
\end{equation}
which gives the claim.
\end{proof}

As $\Tbin$ is compact by Proposition~\ref{P:compact}, we can immediately conclude the following from the convergence of the generators.
\begin{corollary}[The limiting martingale problem]\label{C:tight}
Let\/ $(\smallx_N)_{N\in\N}$ be a sequence of random binary algebraic measure trees with\/
$\smallx_N\in\Tbin^N$, such that
\begin{equation}
   \smallx_N  \Rightarrow \smallx,\quad \text{as} \; N\rightarrow \infty,
\end{equation}
where\/ $\smallx$ is distributed according to\/ $P_0$ on\/  $\mathbb{T}_2^{\mathrm{cont}}$. Let\/
$X^N=(X_t^N)_{t\ge 0}$ be the Aldous chain started in\/ $\smallx_N$.
Then the sequence\/ $(X^N)_{N\in\N}$ is tight in Skorokhod path space.
Any limit process\/ $(X_t)_{t\geq 0}$ has continuous, $\Taldous$-valued paths, and satisfies the\/
$(\AldGen,{\mathcal D}(\AldGen),P_0)$-martingale problem.
\end{corollary}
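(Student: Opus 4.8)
The plan is to read the statement off the standard Ethier--Kurtz machinery, using only three inputs from the excerpt: the uniform convergence of generators (Proposition~\ref{P:generator}), the fact that $\AldGen$ maps into $\C(\Tbin)$ (Lemma~\ref{l:almostFeller}), and the compactness of $\Tbin$ (Proposition~\ref{P:compact}). There are four things to establish: tightness of $(X^N)_N$, continuity of paths of any limit, that any limit is $\Taldous$-valued, and that any limit solves the $(\AldGen,\CD(\AldGen),P_0)$-martingale problem. Throughout I would work with the shape polynomials $\sPol$, which form a point-separating subalgebra of $\C(\Tbin)$ that is dense by Lemma~\ref{l:polalgebra} and coincides with $\CD(\AldGen)$ as a linear space (non-injective shapes are expressible through cladogram polynomials by inclusion--exclusion over coincidence patterns). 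In particular Proposition~\ref{P:generator} applies, by linearity, to every $\Phi\in\sPol$.

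For tightness I would first note that the compact containment condition is vacuous since $\Tbin$ is compact. Fix $\Phi\in\sPol$. As $X^N$ is a pure-jump Markov process with bounded generator $\Omega_N$ acting on the bounded function $\Phi$, the process $M^{N,\Phi}_t:=\Phi(X^N_t)-\int_0^t\Omega_N\Phi(X^N_s)\,\dx s$ is a martingale. By Proposition~\ref{P:generator} we have $\Omega_N\Phi\to\AldGen\Phi$ uniformly, and $\AldGen\Phi\in\C(\Tbin)$ is bounded by Lemma~\ref{l:almostFeller}, so $\sup_N\sup_{\smallx\in\Tbin^N}\lvert\Omega_N\Phi(\smallx)\rvert<\infty$, and the same bound holds for $\Phi^2\in\sPol$. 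Hence the real semimartingale $\Phi\circ X^N$ has a drift with uniformly Lipschitz paths and an angle bracket $\int_0^t(\Omega_N(\Phi^2)-2\Phi\,\Omega_N\Phi)(X^N_s)\,\dx s$ of uniformly bounded derivative, so $\{\Phi\circ X^N\}_N$ is relatively compact in $D_\R[0,\infty)$ by Aldous's criterion. Relative compactness of $(X^N)_N$ in $D_{\Tbin}[0,\infty)$ then follows from the separating-algebra criterion \cite[Theorem~3.9.1]{EthierKurtz86}.

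Next I would upgrade any subsequential limit $X$ to continuous, $\Taldous$-valued paths. A single Aldous move replaces $\mu$ by $\mu+\tfrac1N\delta_{y_e}-\tfrac1N\delta_x$; this perturbs $\mu$ by $\tfrac2N$ in total variation and hence perturbs $\nu_\smallx$ and the metric $r_\mu$ uniformly by $O(\tfrac1N)$, so each jump of $X^N$ has size $O(\tfrac1N)$ in $\diota$. Therefore $\sup_t\diota(X^N_{t-},X^N_t)\to0$, and any limit has continuous paths by \cite[Theorem~3.10.2]{EthierKurtz86}. For non-atomicity, consider $\psi:=1-\Phi^{2,\mathfrak{e}}\in\CD(\AldGen)$, where $1$ is the (trivial) $1$-cladogram polynomial and $\Phi^{2,\mathfrak e}$ is the polynomial for the $2$-cladogram $\mathfrak e$ (a single edge); then $\psi(\smallx)=\mu^{\otimes2}\{u_1=u_2\}=\sum_a\mu(\{a\})^2$ is continuous, vanishes exactly on $\Taldous$, and equals $\tfrac1N$ on $\Tbin^N$. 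Since continuity of $X$ makes every $t$ an a.s.\ continuity point, $X^N_t\Rightarrow X_t$, whence $\E[\psi(X_t)]=\lim_N\tfrac1N=0$ and so $X_t\in\Taldous$ a.s.\ for each $t$; continuity of paths together with closedness of $\Taldous$ in $\Tbin$ (Proposition~\ref{P:compact}) then yields $\Taldous$-valued paths.

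Finally I would identify $X$ as a solution of the martingale problem. Because $X$ is continuous, its finite-dimensional distributions are the weak limits of those of $X^N$. Fixing $\Phi\in\CD(\AldGen)$, times $s_1<\dots<s_k\le s<t$ and $g\in\Cb(\Tbin^k)$, the martingale property of $M^{N,\Phi}$ gives $\E[(\Phi(X^N_t)-\Phi(X^N_s)-\int_s^t\Omega_N\Phi(X^N_r)\,\dx r)\,g(X^N_{s_1},\dots,X^N_{s_k})]=0$. Replacing $\Omega_N\Phi$ by $\AldGen\Phi$ costs at most $\lVert g\rVert_\infty(t-s)\lVert\Omega_N\Phi-\AldGen\Phi\rVert_\infty\to0$ by Proposition~\ref{P:generator}, and passing to the limit (all of $\Phi,\AldGen\Phi,g$ are bounded and continuous, and $x\mapsto\int_s^t\AldGen\Phi(x(r))\,\dx r$ is continuous at continuous paths) yields the same identity with $X$ and $\AldGen\Phi$ in place of $X^N$ and $\Omega_N\Phi$. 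Hence $\Phi(X_t)-\int_0^t\AldGen\Phi(X_r)\,\dx r$ is a martingale, and $X_0\sim P_0$ because $\smallx_N\Rightarrow\smallx\sim P_0$, so $X$ solves the $(\AldGen,\CD(\AldGen),P_0)$-martingale problem. The only step needing genuine care is the quantitative jump-size estimate in $\diota$: it is the single space-specific input, it must be carried out in the \ourmetric\ rather than in any graph metric, and it is precisely what turns the $O(N^2)$-rate jump chains into a continuous limit.
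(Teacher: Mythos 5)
Your overall architecture is the same as the paper's (which cites Ethier--Kurtz for exactly these steps): compactness of $\Tbin$ disposes of compact containment, uniform control of $\Omega_N$ on a dense algebra plus EK~Theorems~3.9.1/3.9.4 gives tightness, vanishing jump sizes give continuity of limits, and the EK~Lemma~4.5.1 argument identifies limits as solutions. Your continuity step (jump size $O(1/N)$ in the \ourmetric) is a legitimate metric-level alternative to the paper's function-level estimate $\lvert\Phi^{m,\testtree}(X^N_t)-\Phi^{m,\testtree}(X^N_{t-})\rvert\le m/N$, and your argument for $\Taldous$-valuedness via $\psi=1-\Phi^{2,\mathfrak e}$ is a correct expansion of the paper's appeal to Proposition~\ref{p:001}. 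The martingale-problem identification is fine, since there you only apply Proposition~\ref{P:generator} to $\Phi\in\CD(\AldGen)$ itself.

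There is, however, one genuine gap: your parenthetical claim that $\sPol$ coincides with $\CD(\AldGen)$ as a linear space (``non-injective shapes are expressible through cladogram polynomials by inclusion--exclusion'') is false, and the tightness step leans on it. To see this, restrict to trees $\smallx\in\Tbin$ with $\mu=x\delta_a+(1-x)\delta_b$ and set $s=x(1-x)$. Every $\Phi^{m,\testtree}$ with \emph{injective} $\testtree\in\Clad$ and $m\ge3$ vanishes on this family (three distinct sample values are impossible), so $\CD(\AldGen)$ restricts to affine functions of $s$; but for $\testtree_0\in\mlClad[4]\setminus\Clad[4]$ the edge with labels $\{1,2\}$ on one leaf and $\{3,4\}$ on the other, $\Phi^{4,\testtree_0}(\smallx)=\mu^{\otimes4}\{u_1=u_2\ne u_3=u_4\}=2s^2$, which is not affine in $s$. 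Structurally, injective-shape polynomials are multilinear (square-free) in the atom masses, while exact-coincidence events carry weights $\mu(\{a\})^k$; inclusion--exclusion only goes in the opposite direction. This bites precisely where you invoke Proposition~\ref{P:generator} ``by linearity'' for $\Phi^2$: for $\Phi\in\CD(\AldGen)$ one generally has $\Phi^2\in\sPol\setminus\CD(\AldGen)$, so neither the uniform bound on $\Omega_N(\Phi^2)$ needed for your angle-bracket estimate, nor the per-function compactness of $\Phi\circ X^N$ for the non-injective members of the dense algebra $\sPol$ required by EK~3.9.1, is covered by the proposition as stated. The hole is repairable: on $\Tbin^N$ every non-injective $\Phi^{m,\testtree}$ satisfies $0\le\Phi^{m,\testtree}\le\binom{m}{2}N^{-1}$, since a coincidence of two samples from the uniform distribution on $N$ leaves has probability $1/N$; hence in the EK~3.9.4 framework one may take the trivial approximating pair $(0,0)$ for these summands (or redo the computation in the proof of Proposition~\ref{P:generator} for $\testtree\in\mlClad$, where the diagonal terms $\mut,\mut'$ no longer vanish but contribute only $O(1)$). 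Some such argument must be supplied; the linear-span identity you assert does not exist.
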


\begin{proof}
\pstep{Tightness} Tightness follows, in view of the approximation result Proposition~\ref{p:001} and
Lemma~\ref{l:almostFeller}, with the
exactly same proof as Theorems~3.9.1 and 3.9.4 in \cite{EthierKurtz86} (see also \cite[Remark~4.5.2]{EthierKurtz86}).
\pstep{Continuous paths}
For $\Phi\in \CD(\AldGen)$, let $\Phi(X^N)=\(\Phi(X^N_t)\)_{t\ge 0}$.
By definition, $\CD(\AldGen)$ induces the topology of sample-shape convergence on $\Taldous$. Hence, continuity of
the paths of the limit process $X=(X_t)_{t\ge 0}$ in $\Taldous$ is equivalent to path-continuity of
$\Phi(X)$ for all $\Phi \in \CD(\AldGen)$. Because $\Phi(X)$ is the limit of $\Phi(X^N)$, this follows from the
obvious estimate
$\bigl|\Phi(X_t^N)-\Phi(X_{t-}^N)\bigr| \le \frac mN$ for $\Phi=\Phi^{m,\testtree}$.
\pstep{Values in $\Taldous$} That any limit point has $\Taldous$-valued paths follows directly from the fact that
	$X^N$ is $\Tbin^N$-valued, together with the approximation result Proposition~\ref{p:001}.
\pstep{Martingale problem}
That all limit points satisfy the martingale problem follows with the same proof as Lemma~4.5.1 in \cite{EthierKurtz86}.
\end{proof}

The following corollary is immediate from the previous corollary and the approximation result Proposition~\ref{p:001}.
\begin{corollary}[Existence]\label{C:exist}
For any probability measure\/ $P_0$ on\/  $\mathbb{T}_2^{\mathrm{cont}}$ there exists a solution in\/
$\C_{\mathbb{T}_2^{\mathrm{cont}}}(\R_+)$ of the\/   $(\AldGen,\CD(\AldGen),P_0)$-martingale problem.
\end{corollary}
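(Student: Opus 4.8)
The plan is to reduce the existence claim entirely to the tightness-and-identification statement of Corollary~\ref{C:tight}, which already carries all of the analytic content. That corollary asserts that whenever the (random) initial states $\smallx_N\in\Tbin^N$ converge in distribution to some $\smallx\sim P_0$ on $\mathbb{T}_2^{\mathrm{cont}}$, the Aldous chains $X^N$ started in $\smallx_N$ are tight in Skorokhod path space and every limit point solves the $(\AldGen,\CD(\AldGen),P_0)$-martingale problem with continuous, $\Taldous$-valued paths. So the whole task reduces to exhibiting, for an \emph{arbitrary} $P_0$ on $\mathbb{T}_2^{\mathrm{cont}}$, a sequence of $\Tbin^N$-valued random variables $\smallx_N$ with $\smallx_N\Rightarrow\smallx\sim P_0$; this is exactly where the approximation result Proposition~\ref{p:001} is needed.

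First I would discretize $P_0$ in a measurable way. Fix a metric $d$ on $\Tbin$ inducing the bpdd-Gromov-weak topology (available by Proposition~\ref{P:compact}). Since $\Tbin^N$ is a \emph{finite} set --- its elements are the finitely many isomorphism classes of binary trees with $N$ leaves carrying the uniform leaf measure, via the minimal representative with $2N-2$ vertices --- the map $g_N\colon\mathbb{T}_2^{\mathrm{cont}}\to\Tbin^N$ sending $\smallx$ to a nearest point of $\Tbin^N$, with ties broken by a fixed enumeration of $\Tbin^N$, is Borel measurable: each $\smallx\mapsto d(\smallx,y)$, $y\in\Tbin^N$, is continuous, and the minimizer over a finite set is selected on finitely many Borel pieces. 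By Proposition~\ref{p:001} every $\smallx\in\mathbb{T}_2^{\mathrm{cont}}$ is a limit of elements of $\Tbin^N$, so $d\bigl(\smallx,g_N(\smallx)\bigr)=\min_{y\in\Tbin^N}d(\smallx,y)\to 0$ for every such $\smallx$.

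Next I would let $\smallx\sim P_0$ and set $\smallx_N:=g_N(\smallx)\in\Tbin^N$. Then $\smallx_N\to\smallx$ almost surely, hence $\smallx_N\Rightarrow\smallx$ with limit law $P_0$. Feeding this into Corollary~\ref{C:tight} gives tightness of the corresponding Aldous chains $(X^N)_{N\in\N}$; by Prokhorov's theorem I pass to a weakly convergent subsequence, and the limit process $X$ is a solution of the $(\AldGen,\CD(\AldGen),P_0)$-martingale problem with continuous paths taking values in $\Taldous=\mathbb{T}_2^{\mathrm{cont}}$ (the two spaces literally coincide, since $\at(\mu)=\emptyset$ is the same as $\mu$ being non-atomic). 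This produces the required solution in $\C_{\mathbb{T}_2^{\mathrm{cont}}}(\R_+)$.

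The one point that needs any care is the measurable discretization of $P_0$: the \emph{pointwise} density supplied by Proposition~\ref{p:001} must be upgraded to a genuine sequence of random initial conditions converging in law to $P_0$. This is precisely why the finiteness of $\Tbin^N$ is convenient, as it makes the nearest-point selection automatically Borel and dispenses with any Kuratowski--Ryll-Nardzewski-type selection argument. Everything beyond this is delivered verbatim by Corollary~\ref{C:tight}, which is the sense in which the statement is \emph{immediate}.
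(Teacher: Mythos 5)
Your proposal is correct and follows exactly the route the paper takes: the paper's proof is the one-line remark that existence is ``immediate from the previous corollary and the approximation result Proposition~\ref{p:001}'', and your argument simply fills in the implicit details (the Borel nearest-point discretization $g_N$ onto the finite set $\Tbin^N$, almost-sure convergence $g_N(\smallx)\to\smallx$, and extraction of a subsequential limit via Corollary~\ref{C:tight} and Prokhorov). Your observation that $\Taldous=\mathbb{T}_2^{\mathrm{cont}}$ and that finiteness of $\Tbin^N$ makes the selection trivially measurable is exactly the right justification for the paper's ``immediate''.
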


\section{Duality, uniqueness and convergence}
\label{S:duality}
In this section we first obtain a  duality result that in turn  allows to conclude the uniqueness of the
martingale problem. We also use duality to show that the Aldous diffusion is a Feller process on $\Taldous$.
For $m\in\N$ let  $Y^m:=(Y^m_t)_{t\ge 0}$ be the  $\Clad$-valued Aldous chain
with generator $\Omega_m$ from \eqref{e:genN}. If $Y^m_0=\mathfrak{t}\in\Clad$, then $\mathbb{E}^Y_{\mathfrak{t}}$ denotes the corresponding expectation.

\begin{proposition}[Duality]\label{P:duality}
Let\/ $P_0$ be an arbitrary probability measure on\/  $\mathbb{T}_2^{\mathrm{cont}}$ and
let\/ $X:=((T_t,c_t,\mu_t))_{t\ge 0}$ be a solution of the\/  $(\AldGen,{\mathcal
D}(\AldGen),P_0)$-martingale problem  in\/ $\mathcal{D}_{\mathbb{T}_2^{\mathrm{cont}}}(\R_+)$.
For arbitrary\/ $m\in\N$ and\/ $\mathfrak{t}\in\Clad$, let\/ $Y^m:=(Y^m_t)_{t\ge 0}$ be the\/
$\Clad$-valued Aldous chain with\/ $Y^m_0=\mathfrak{t}$. Assume that\/  $Y^m$ is independent of\/ $X$. Then
\begin{equation}
\label{e:021}
\begin{aligned}
   \MoveEqLeft \mathbb{E}^X_{P_0}\big[\mu^{\otimes m}_t\big\{\uu\in T^m_t:\,\shape[(T_t,c_t)](\uu)=\mathfrak{t}\big\}\big]
   \\
   &=
   \int_{\mathbb{T}_2^{\mathrm{cont}}} \mathbb{E}^Y_{\mathfrak{t}}\big[\mu^{\otimes m}\big\{\uu\in
   T^m:\,\shape(\uu)=Y^m_t\big\}\big]\,P_0(\mathrm{d}(T,c,\mu)).
   \end{aligned}
\end{equation}
\end{proposition}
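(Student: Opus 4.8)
The plan is to recognize the claimed identity as a Markov duality between the $\Taldous$-valued process $X$ and the finite-state Aldous chain $Y^m$, with duality function
\[
   H(\smallx,\testtree) := \Phi^{m,\testtree}(\smallx) = \mu^{\otimes m}\bigl(\shape^{-1}(\testtree)\bigr),
   \qquad \smallx=(T,c,\mu)\in\Taldous,\ \testtree\in\Clad.
\]
The two sides of \eqref{e:021} are exactly $\mathbb{E}^X_{P_0}[H(X_t,\testtree)]$ and $\int\mathbb{E}^Y_\testtree[H(\smallx,Y^m_t)]\,P_0(\mathrm{d}\smallx)$, so the statement reads $\mathbb{E}^X_{P_0}[H(X_t,\testtree)]=\int\mathbb{E}^Y_\testtree[H(\smallx,Y^m_t)]\,P_0(\mathrm{d}\smallx)$, which is the conclusion of the standard duality theorem (see e.g.\ \cite[Theorem~4.4.11]{EthierKurtz86}). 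First I would verify its hypotheses: $H$ is jointly measurable and bounded by $1$; for each fixed $\testtree\in\Clad$ we have $H(\cdot,\testtree)=\Phi^{m,\testtree}\in\CD(\AldGen)$; and for each fixed $\smallx$ the map $H(\smallx,\cdot)$ is a bounded function on the finite set $\Clad$, hence trivially in the domain of the bounded generator $\Omega_m$. Since $Y^m$ lives on the finite space $\Clad$, the functions $\AldGen H(\cdot,\testtree)$ and $\Omega_m H(\smallx,\cdot)$ are uniformly bounded, which supplies the integrability the theorem needs.

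The heart of the argument, and the step I expect to require the most care, is the \emph{generator duality relation}
\[
   \AldGen H(\cdot,\testtree)(\smallx) = \Omega_m H(\smallx,\cdot)(\testtree)
   \qquad\text{for all } \smallx\in\Taldous,\ \testtree\in\Clad,
\]
where on the right $\Omega_m$ acts on the cladogram variable. By \eqref{e:set3}, the left-hand side equals $\int \Omega_m\one_\testtree\bigl(\shape(\uu)\bigr)\,\mu^{\otimes m}(\mathrm{d}\uu)$, with $\Omega_m$ acting on the shape $\shape(\uu)$; expanding $\Phi^{m,\testtree^{(u,e)}}-\Phi^{m,\testtree}$ shows the right-hand side equals $\int\sum_{(u,e)}\bigl(\one_{\testtree^{(u,e)}}(\shape(\uu))-\one_\testtree(\shape(\uu))\bigr)\,\mu^{\otimes m}(\mathrm{d}\uu)$. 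Writing $\mathfrak{s}=\shape(\uu)$, the equality thus reduces to the pointwise combinatorial identity
\[
   \sum_{(u,e)}\bigl(\one_\testtree(\mathfrak{s}^{(u,e)})-\one_\testtree(\mathfrak{s})\bigr)
   = \sum_{(u,e)}\bigl(\one_{\testtree^{(u,e)}}(\mathfrak{s})-\one_\testtree(\mathfrak{s})\bigr)
\]
for all $\mathfrak{s},\testtree\in\Clad$. The two diagonal contributions agree because every $m$-cladogram has the same number $m(2m-3)$ of leaf--edge pairs; the off-diagonal terms match because $\sum_{(u,e)}\one_\testtree(\mathfrak{s}^{(u,e)})$ counts the Aldous moves sending $\mathfrak{s}$ to $\testtree$ while $\sum_{(u,e)}\one_{\testtree^{(u,e)}}(\mathfrak{s})$ counts those sending $\testtree$ to $\mathfrak{s}$, and these coincide by reversibility of the Aldous chain with respect to the uniform distribution on $\Clad$ (equivalently, the symmetry $q(\mathfrak{s},\testtree)=q(\testtree,\mathfrak{s})$ of the jump rates). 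Proving this symmetry cleanly---ideally by exhibiting the explicit move-reversing involution on leaf--edge pairs---is the main obstacle.

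With the generator duality in hand the conclusion follows from the cited theorem; for completeness I would also be prepared to give the self-contained interpolation argument. For independent $X$ (started from $P_0$) and $Y^m$ (started from $\testtree$), set $G(s):=\mathbb{E}\bigl[H(X_s,Y^m_{t-s})\bigr]$ for $s\in[0,t]$ and show $G$ is constant. Differentiating in $s$ and using the $X$-martingale problem on the first slot together with the $Y^m$-martingale problem on the second yields $G'(s)=\mathbb{E}\bigl[\AldGen H(\cdot,Y^m_{t-s})(X_s)-\Omega_m H(X_s,\cdot)(Y^m_{t-s})\bigr]=0$ by the generator duality; passing to the limit in the difference quotients is justified by the uniform boundedness noted above, where the finiteness of $\Clad$ is essential. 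Evaluating the endpoints gives $G(t)=\mathbb{E}^X_{P_0}[H(X_t,\testtree)]$ and $G(0)=\int\mathbb{E}^Y_\testtree[H(\smallx,Y^m_t)]\,P_0(\mathrm{d}\smallx)$, which is precisely \eqref{e:021}. Note that this uses only that $X$ is \emph{some} solution of the martingale problem, so the duality will later be available to prove uniqueness.
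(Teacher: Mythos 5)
Your proposal is correct and follows essentially the same route as the paper: the same duality function $H(\smallx,\testtree)=\Phi^{m,\testtree}(\smallx)$, the same generator-duality identity $\AldGen H(\cdot,\testtree)(\smallx)=\Omega_m H(\smallx,\cdot)(\testtree)$ (which the paper, too, reduces to the symmetry of the rates of $\Omega_m$, asserted there via reversibility), and the same appeal to \cite[Lemma~4.4.11, Corollary~4.4.13]{EthierKurtz86}. Your extra details---the move-reversing involution behind $q(\mathfrak{s},\testtree)=q(\testtree,\mathfrak{s})$, the matching diagonal counts $m(2m-3)$, and the interpolation argument with $G(s)=\mathbb{E}[H(X_s,Y^m_{t-s})]$---merely unpack what the paper leaves to the cited results, and they are all sound.
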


\begin{proof} Let $m\in\N$. For $\smallx=(T,c,\mu)\in\Tbin^{\mathrm{cont}}$ and $\mathfrak{t}\in\Clad$,
we define $H(\smallx,\mathfrak{t}):=\mu^{\otimes m}\bigl\{\uu\in T^m:\,\shape(\uu)=\mathfrak{t}\bigr\}$.
Then
\begin{equation}\label{e:022}
  \AldGen H(\cdot,\mathfrak{t})(\smallx)=\int_{T^m}\mu^{\otimes m}(\mathrm{d}\uu)\,\Omega_m\one_\mathfrak{t}\(\shape(\uu)\)
  =\Omega_mH(\smallx,\cdot)(\mathfrak{t}).
\end{equation}
By our assumptions on the test functions $H$ and definitions of $\AldGen$ and $\Omega_m$, the result follows by  \cite[Lemma~4.4.11,  Corollary~4.4.13]{EthierKurtz86}.
\end{proof}

\begin{corollary}[Uniqueness of the martingale problem]\label{C:unique}
Let\/ $P_0$ be an arbitrary probability measure on\/ $\mathbb{T}_2^{\mathrm{cont}}$. Then uniqueness holds for
the\/ $(\AldGen,{\mathcal D}(\AldGen),P_0)$-martingale problem in\/
$\mathcal{D}_{\mathbb{T}_2^{\mathrm{cont}}}(\R_+)$.
\end{corollary}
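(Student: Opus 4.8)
The plan is to leverage the duality from Proposition~\ref{P:duality} to pin down the one-dimensional time marginals of any solution, and then upgrade to finite-dimensional distributions by the standard Markov-uniqueness machinery of \cite{EthierKurtz86}.

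First I would fix an arbitrary solution $X=((T_t,c_t,\mu_t))_{t\ge0}$ in $\mathcal{D}_{\Taldous}(\R_+)$ with initial law $P_0$, and read off from \eqref{e:021} that for every $m\in\N$ and every $\testtree\in\Clad$ the quantity $\mathbb{E}^X_{P_0}\bigl[\Phi^{m,\testtree}(X_t)\bigr]$ equals the right-hand side of the duality identity. The decisive observation is that this right-hand side involves only $P_0$ and the law of the finite, explicitly given dual Aldous chain $Y^m$, and hence does \emph{not} depend on the particular solution $X$. Consequently the numbers $\mathbb{E}^X_{P_0}\bigl[\Phi^{m,\testtree}(X_t)\bigr]$ coincide for any two solutions started in $P_0$, for all $m$, all $\testtree\in\Clad$, and all $t\ge0$.

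Next I would argue that this data already determines the law of $X_t$ for each fixed $t$. By Lemma~\ref{l:polalgebra} the shape polynomials form a uniformly dense subalgebra of $\C(\Tbin)$, and (by the remark closing Section~\ref{S:algebraic}) the restrictions to $\Taldous$ of the functions $\Phi^{m,\testtree}$ with $\testtree\in\Clad$ span a dense subspace of $\C(\Taldous)$; here one uses that functions indexed by labelled cladograms with repeated labels vanish on $\Taldous$. Since $\Taldous$ is compact metrizable (Proposition~\ref{P:compact}), a family spanning a sup-norm-dense subspace of $\C(\Taldous)$ is measure-determining, so knowing $\mathbb{E}\bigl[\Phi^{m,\testtree}(X_t)\bigr]$ for all such $m,\testtree$ fixes the distribution $\law(X_t)$ uniquely. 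Hence any two solutions with the same initial law $P_0$ have identical one-dimensional time marginals.

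Finally, since the previous two steps apply to \emph{every} probability measure $P_0$ on $\Taldous$, I would invoke \cite[Theorem~4.4.2]{EthierKurtz86}: for a martingale problem on a Polish (here compact metrizable) state space, uniqueness of one-dimensional marginals for all initial distributions forces solutions to be Markov and unique in the sense of finite-dimensional distributions, which is exactly the uniqueness claimed. The step that carries the real weight is the passage from one-dimensional to finite-dimensional distributions; it is painless only because the duality is available for every initial law -- precisely the hypothesis Theorem~4.4.2 needs -- while the only remaining points requiring care are the measure-determining property of $\CD(\AldGen)$ on $\Taldous$ and the measurability/regularity conditions of \cite[Theorem~4.4.2]{EthierKurtz86}, both of which follow from the compactness of $\Taldous$.
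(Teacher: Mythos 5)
Your proof is correct and follows essentially the same route as the paper: the paper deduces uniqueness in one line from the duality of Proposition~\ref{P:duality} together with the fact that shape polynomials are separating, citing \cite[Proposition~4.4.7]{EthierKurtz86}. Your argument simply unpacks that citation --- duality fixes the one-dimensional marginals, the dense subalgebra $\CD(\AldGen)\subseteq\C(\Taldous)$ makes them measure-determining, and \cite[Theorem~4.4.2]{EthierKurtz86} upgrades to finite-dimensional uniqueness --- which is precisely the mechanism inside the result the paper invokes.
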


\begin{proof}
As the set of all shape polynomials is separating (for probability measures), the result is immediate by the previous proposition and  Proposition~4.4.7
from~\cite{EthierKurtz86}.
\end{proof}

\begin{corollary}[Feller process]\label{c:Feller}
	For\/ $F\in \C(\Taldous)$, $t\ge0$, and\/ $\smallx \in \Taldous$, let
	\begin{equation}
		S_tF(\smallx) := \Exp_\smallx\(F(X_t)\),
	\end{equation}
	where, under\/ $\Exp_\smallx$, $X=(X_t)_{t\ge 0}$ is the Aldous diffusion on\/ $\Taldous$ started in\/
	$\smallx$. Then\/ $(S_t)_{t\ge 0}$ is a Feller semi-group.
	In particular, the Aldous diffusion is a strong Markov process.
\end{corollary}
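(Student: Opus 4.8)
The plan is to verify that $(S_t)_{t\ge 0}$ is a positive, strongly continuous contraction semigroup mapping $\C(\Taldous)$ into itself, and then to invoke the standard fact that a Feller process with (here, continuous) paths is strong Markov. Positivity and the contraction property are immediate, since $S_tF(\smallx)=\Exp_\smallx(F(X_t))$ is an expectation against a probability law and $S_t\one=\one$. The semigroup property $S_{t+s}=S_tS_s$ is a consequence of the Markov property of $X$, which is already available: well-posedness of the martingale problem for all initial laws (Corollaries~\ref{C:exist} and~\ref{C:unique}) makes the solution Markov by \cite[Theorem~4.4.2]{EthierKurtz86}. The two substantive points are therefore (i) that $S_t$ preserves continuity, and (ii) strong continuity at $t=0$, and both will be obtained from the duality relation.

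The key step is to evaluate $S_t$ on shape polynomials. Applying Proposition~\ref{P:duality} with $P_0=\delta_\smallx$ gives, for $m\in\N$ and $\testtree\in\Clad$,
\begin{equation*}
   S_t\Phi^{m,\testtree}(\smallx)
   = \Exp_\smallx\bigl[\Phi^{m,\testtree}(X_t)\bigr]
   = \mathbb{E}^Y_{\testtree}\bigl[\Phi^{m,Y^m_t}(\smallx)\bigr]
   = \sum_{\testtree'\in\Clad} p^m_t(\testtree,\testtree')\,\Phi^{m,\testtree'}(\smallx),
\end{equation*}
where $p^m_t(\testtree,\testtree')$ denote the transition probabilities of the $\Clad$-valued Aldous chain $Y^m$. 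Since $\Clad$ is finite and each $\Phi^{m,\testtree'}$ is continuous on $\Taldous$ (Proposition~\ref{P:conveq}), the right-hand side is a finite linear combination of continuous shape polynomials, so $S_t\Phi^{m,\testtree}\in\C(\Taldous)$. To pass to a general $F\in\C(\Taldous)$, I would use that the restrictions to $\Taldous$ of functions in $\CD(\AldGen)$ are dense in $\C(\Taldous)$ (see the remark following Lemma~\ref{l:polalgebra}) together with the contraction property: approximating $F$ uniformly by a shape polynomial $\Phi$, the function $S_tF$ is a uniform limit of the continuous functions $S_t\Phi$, hence continuous. This yields $S_t\colon\C(\Taldous)\to\C(\Taldous)$.

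For strong continuity, the same formula gives $\bigl|S_t\Phi^{m,\testtree}(\smallx)-\Phi^{m,\testtree}(\smallx)\bigr|\le\sum_{\testtree'}\bigl|p^m_t(\testtree,\testtree')-\one_{\testtree=\testtree'}\bigr|$ uniformly in $\smallx$, which tends to $0$ as $t\to0$ because $Y^m$ is a continuous-time chain on a finite state space, so $p^m_t\to\mathrm{Id}$. A routine $3\eps$-argument, using density of shape polynomials and $\|S_t\|\le1$, then upgrades this to $\|S_tF-F\|_\infty\to0$ for every $F\in\C(\Taldous)$. Together with the semigroup property and positivity, this shows that $(S_t)_{t\ge0}$ is a Feller semigroup; since the Aldous diffusion has continuous paths (Corollary~\ref{C:tight}), the strong Markov property follows from the general theory of Feller processes (cf.\ \cite[Section~4.2]{EthierKurtz86}).

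The main obstacle is point (i): a priori, continuity of $\smallx\mapsto\Exp_\smallx(F(X_t))$ would require controlling how the whole (infinite-dimensional) law of the solution depends on its starting tree, which the martingale-problem machinery does not hand us directly. Duality is exactly what circumvents this, by trading the evolution of $X$ for that of the \emph{finite} chain $Y^m$ on cladograms, after which both continuity and strong continuity reduce to elementary facts about finite-state Markov chains plus the density of shape polynomials. The remaining verifications are routine functional-analytic bookkeeping.
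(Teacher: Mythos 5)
Your proposal is correct, and its core mechanism is the same as the paper's: by compactness of $\Taldous$ and density of shape polynomials (Lemma~\ref{l:polalgebra} and the remark following it), it suffices to control $S_t$ on $\CD(\AldGen)$, and there the duality of Proposition~\ref{P:duality} with $P_0=\delta_\smallx$ trades the infinite-dimensional dependence on the starting tree for the finite chain $Y^m$, giving $S_t\Phi^{m,\testtree}=\sum_{\testtree'\in\Clad}p^m_t(\testtree,\testtree')\,\Phi^{m,\testtree'}\in\C(\Taldous)$; the semigroup property likewise comes from well-posedness via \cite[Theorem~4.4.2]{EthierKurtz86} in both arguments. The one place where you genuinely diverge is strong continuity: the paper deduces \emph{weak} continuity of $t\mapsto S_tF(\smallx)$ from the continuity of the sample paths of $X$ (established in Corollary~\ref{C:tight}) and then invokes the general fact that weak continuity of a Feller-type semigroup implies strong continuity (Kallenberg, Theorem~19.6), whereas you extract a quantitative, $\smallx$-uniform bound directly from duality, namely $\|S_t\Phi^{m,\testtree}-\Phi^{m,\testtree}\|_\infty\le\sum_{\testtree'}\bigl|p^m_t(\testtree,\testtree')-\one_{\testtree=\testtree'}\bigr|\to0$ (using $0\le\Phi^{m,\testtree'}\le1$), and then run a $3\eps$-argument with the contraction property. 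Your route buys independence from path regularity of the limit process — it would work even before one knows the paths are continuous — and it comes with an explicit rate inherited from the finite-state chain; the paper's route is shorter given that path continuity is already in hand, at the cost of citing the weak-to-strong continuity theorem. Both are complete; your closing step (Feller plus continuous paths implies strong Markov, \cite[Theorem~4.2.7]{EthierKurtz86}) matches the paper's.
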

\begin{proof}
	$(S_t)_{t\ge 0}$ is well-defined by existence and uniqueness shown in Corollaries~\ref{C:exist} and
	\ref{C:unique}. It is a semi-group on the set of bounded measurable functions on $\Taldous$ by the Markov
	property of $X$, which in turn follows from uniqueness and Theorem~4.4.2(a) in \cite{EthierKurtz86}.
	Recall from Proposition~\ref{P:compact} and Lemma~\ref{l:polalgebra} that the state space $\Taldous$ is
	compact, and the set $\CD(\AldGen)$ of shape polynomials is uniformly dense in $\C(\Taldous)$. Hence, in order to
	show that $S_t$ maps $\C(\Taldous)$ into itself, it is enough to show $S_tF\in\C(\Taldous)$ for
	$F=\Phi^{m,\testtree}\in \CD(\AldGen)$. Using duality, we have $S_t\Phi^{m,\testtree}(\smallx) =
	\Exp\bigl[ \Phi^{m,Y_t}(\smallx) \bigr]$ for the $\Clad$-valued Aldous chain started in $\testtree$.
	Thus $S_t\Phi^{m,\testtree} \in \C(\Taldous)$, because it is a finite linear combination of the continuous
	functions $\Phi^{m,\testtree'}$ for different $\testtree'\in\Clad$.

	We have shown that $(S_t)_{t\ge 0}$ is a contraction semigroup on $\C(\Taldous)$.
	Its weak continuity follows directly from continuity of the sample paths of $X$.
	Weak continuity implies strong continuity, e.g., by Theorem~19.6 of \cite{Kallenberg2002}. Therefore,
	$X$ is a Feller process. This also implies the strong Markov property (e.g.\
	\cite[Theorem~4.2.7]{EthierKurtz86}).
\end{proof}

\section{Long term behavior and the Brownian CRT}
\label{S:longterm}
In this section, we define the algebraic measure Brownian CRT, and provide the joint density of the cladogram shape
spanned by a sample of finite size together with the vector of subtree masses branching off the edges of the
cladogram. Moreover, we show that the algebraic measure Brownian CRT is invariant under the Aldous diffusion and that
for any initial $\smallx\in\mathbb{T}_2^N$, the Aldous diffusion converges in law to the algebraic measure
Brownian CRT as time goes to infinity.

Recall the definition of the set $\Clad$ of $m$-cladograms (after Definition~\ref{Def:mclado}) and the shape
$\shape(u_1,...,u_m)$ spanned by the vector of $m$ points $u_1,...,u_m\in T$ (Definition~\ref{Def:shapeclado}).
We define
\begin{definition}[Algebraic measure Brownian CRT] \label{Def:amCRT}
The algebraic measure Brownian CRT is the unique (in distribution) random binary algebraic measure tree
$\X_{\mathrm{CRT}}=(T,c,\mu)$ with uniform annealed sample shape distribution, i.e., for all $m\in\N$,
for all $\mathfrak{t}\in\Clad$,
\begin{equation}
\label{e:013}
   \mathbb{E}_{\mathrm{CRT}}\Big[\mu^{\otimes m}\big\{(u_1,...,u_m):\,\shape\(u_1,.,,,.u_m\)=\mathfrak{t}\big\}\Big]=\tfrac{1}{\#\Clad}.
\end{equation}
\end{definition}

Note that there is a unique law on $\Tbin$ satisfying \eqref{e:013} because the sample shape distribution
separates probability measures on $\mathbb{T}_2$, and it is realized through the well-known Brownian CRT once we
ignore the distances (compare, \cite[Theorem~23]{Aldous1993}).

Now we provide the analog of \cite[Theorem~23]{Aldous1993} by considering, together with the sample shape, the
vector of masses of the subtrees branching off the edges of the shape cladogram. As expected, under the annealed
law of the Brownian CRT, we obtain that this vector is Dirichlet distributed and independent of the shape.
To state the result more precisely, for
$\uu=(u_1,\ldots,u_m) \in T$ let $T(\uu) = c(\{u_1,\ldots,u_m\}^3)$ be the generated subtree, and for
$e=\{e_x,e_y\}\in\edge(T(\uu))$ let
\begin{equation}
\eta_{(T,c,\mu)}(\uu, e) := \mu\Bigl(
	\Bset{v\in T}{c(v,e_x,e_y) \,\in\, (e_x,e_y) \cup \(\{e_x,e_y\} \cap \lf(T(\uu))\) }\Bigr).
\end{equation}
Let $\eta_{(T,c,\mu)}(\uu) = \(\eta_{(T,c,\mu)}(\uu,e)\)_{e\in \edge(T(\uu))}$ be the vector of these $2m-3$  masses
(assuming $u_1,\ldots,u_m$ are distinct).
We obtain the following, which is proven in the special case $m=3$ in \cite[Theorem~2]{Aldous1994}.
\begin{proposition}[Brownian CRT and $\Dir(\tfrac{1}{2},...,\tfrac{1}{2})$] \label{P:002}
Let\/ $\X_\mathrm{CRT}$ be the Brownian CRT, $m\in\N$, $\testtree \in \Clad$ and\/ $f\colon\Delta_{2m-3} \to \R$ bounded
measurable, where\/ $\Delta_k$ is the\/ $k$-simplex for\/ $k\in\N$, i.e.,
\begin{equation}
\label{e:Deltak}
   \Delta_{k}:=\big\{x\in [0,1]^k:\,x_1+...+x_{k} =1\big\}.
\end{equation}
Then the following holds:
\begin{equation}
\label{e:015}
\begin{aligned}
\MoveEqLeft{\mathbb{E}_{\mathrm{CRT}}\Bigl[\int\mu^{\otimes m}(\mathrm{d}\uu)
	\one_{\mathfrak{t}}\(\shape(\uu))\)f\(\eta_{(T,c,\mu)}(\uu)\)\Bigr]}\\
   &= \tfrac1{\#\Clad} \int_{\Delta_{2m-3}} f(\underline{x}) \;\Dirhalf(\mathrm{d}\underline{x}) \\
   &=\tfrac{\Gamma(m-\frac{3}{2})}{\#\Clad\Gamma(\frac{1}{2})^{2m-3}}
  	 \int_{\Delta_{2m-3}}f(\underline{x})\(x_1\cdot \ldots\cdot x_{2m-3}\)^{-\frac12}\,\mathrm{d}\underline{x},
   \end{aligned}
\end{equation}
where\/ $\Dirhalf$ is the Dirichlet distribution and the last integral in \eqref{e:015} is w.r.t.\ Lebesgue
measure on $\Delta_{2m-3}$.
\end{proposition}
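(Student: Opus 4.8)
The plan is to prove \eqref{e:015} by induction on $m$, reducing the general statement to the three‑point case, which is \cite[Theorem~2]{Aldous1994}. First I would observe that the shape part of \eqref{e:015} is automatic: by Definition~\ref{Def:amCRT} the annealed sample shape distribution of $\X_\mathrm{CRT}$ is uniform on $\Clad$, so it suffices to show that, conditionally on $\shape(\uu)=\testtree$, the subtree‑mass vector $\eta_{(T,c,\mu)}(\uu)$ is $\Dirhalf$‑distributed and does not depend on $\testtree$. Equivalently, writing $\Lambda_m(\{\testtree\}\times A):=\mathbb{E}_{\mathrm{CRT}}\bigl[\int\mu^{\otimes m}(\dx\uu)\,\one_{\shape(\uu)=\testtree}\,\one_{\eta_{(T,c,\mu)}(\uu)\in A}\bigr]$, I want $\Lambda_m(\{\testtree\}\times A)=\tfrac1{\#\Clad}\,\Dirhalf(A)$ for every $\testtree\in\Clad$; the explicit density in the last line of \eqref{e:015} is then just the definition of $\Dirhalf$ on $\Delta_{2m-3}$ (note $(2m-3)/2=m-\tfrac32$). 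The cases $m\le 2$ are degenerate and trivial, and the base case $m=3$ has a single shape and exactly $2m-3=3$ masses, so it is precisely \cite[Theorem~2]{Aldous1994}.

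For the inductive step from $m$ to $m+1$, I would condition on $u_1,\dots,u_m$ and then sample the extra point $u_{m+1}\sim\mu$. Since $\mu$ is non‑atomic, $u_{m+1}$ almost surely avoids the finite spanning tree $T(\uu)$ and lands in exactly one of the $2m-3$ bushes branching off its edges, falling in the bush off edge $e$ with conditional probability equal to its mass $x_e$. This is a size‑biased choice of coordinate, and I would invoke the standard size‑biasing property of the Dirichlet distribution: if $(x_e)_e\sim\Dirhalf$, then the selected edge is uniform among the $2m-3$ edges (probability $\tfrac{1/2}{(2m-3)/2}=\tfrac1{2m-3}$ each), and conditionally on selecting $e$ the vector becomes $\Dir(\dots,\tfrac32,\dots)$ with the parameter at $e$ raised from $\tfrac12$ to $\tfrac32$. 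Inserting $u_{m+1}$ subdivides $e$ at the new branch point $w=c(u_{m+1},e_x,e_y)$ and replaces the single coordinate $x_e$ by the three masses of the parts of the bush lying towards $e_x$, towards $e_y$, and towards $u_{m+1}$, leaving the other coordinates unchanged. On the combinatorial side, inserting a leaf at a uniform edge of a uniform $m$‑cladogram yields a uniform $(m+1)$‑cladogram (using $\#\Clad[m+1]=(2m-3)\,\#\Clad$), matching the required uniform shape.

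The heart of the matter is the sub‑claim that, conditionally on $u_{m+1}$ landing in the bush off $e$, the normalized three‑way split of $x_e$ is $\Dir(\tfrac12,\tfrac12,\tfrac12)$ and independent of the remaining masses. Granting this, the step closes by the aggregation/disaggregation calculus of the Dirichlet distribution: splitting the size‑biased coordinate (parameter $\tfrac32$) into three independent $\Dir(\tfrac12,\tfrac12,\tfrac12)$ pieces, with $\tfrac12+\tfrac12+\tfrac12=\tfrac32$, turns $\Dir(\dots,\tfrac32,\dots)$ back into $\Dir(\tfrac12,\dots,\tfrac12)$ on $\Delta_{2(m+1)-3}$, which is exactly $\Lambda_{m+1}$. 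To prove the sub‑claim I would use the recursive self‑similarity of the Brownian CRT: conditionally on the mass vector the bushes are independent, and the bush off $e$, rescaled to unit mass, is again a Brownian CRT in which the attachment point(s) $e_x,e_y$ serve as distinguished poles. Using the re‑rooting invariance of the CRT to treat such a pole as an independent $\mu$‑sample, the three masses towards $e_x$, towards $e_y$, and towards the fresh sample $u_{m+1}$ become the three subtree masses of a three‑point sample, so \cite[Theorem~2]{Aldous1994} applies and yields $\Dir(\tfrac12,\tfrac12,\tfrac12)$.

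I expect the main obstacle to be making this last reduction rigorous. One must justify the conditional independence of the bushes given their masses and the rescaled‑CRT law of each bush, and in particular handle the internal‑edge bushes, which carry two poles $e_x,e_y$ rather than a single root, together with the external‑edge bushes, where two of the three reference directions are genuine samples and only one is a pole. The delicate step throughout is to argue that splitting relative to these poles is distributionally identical to splitting relative to independent $\mu$‑samples, which is exactly where the self‑similarity and re‑rooting invariance of the Brownian CRT must be applied with care. The remaining computations—writing out the Dirichlet density and checking the normalizing constant $\Gamma(m-\tfrac32)/\bigl(\#\Clad\,\Gamma(\tfrac12)^{2m-3}\bigr)$—are routine and I would not carry them out in detail.
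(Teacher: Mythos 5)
Your strategy is genuinely different from the paper's, which does not induct at all: the paper follows Aldous's own proof of \cite[Theorem~2]{Aldous1994} and establishes a \emph{local limit theorem} for uniform $N$-cladograms, writing down the exact count \eqref{e:016} of $N$-cladograms whose first $m$ leaves span $\testtree$ with prescribed subtree leaf-counts $n$, using $\#\Clad[N]=(2N-5)!!$ and Stirling to get $q_N(n)\approx N^{-(2m-4)}\cdot\tfrac1{\#\Clad}\cdot$ the $\Dirhalf$ density, and then passing to the limit via convergence of the uniform cladogram to the CRT. That argument is elementary and self-contained, and in particular never needs any conditional decomposition of the CRT.

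Your proposal, by contrast, has a genuine gap exactly where you locate it, and I do not think the tools you invoke close it. The inductive step stands or falls with the sub-claim: conditionally on the spanning shape and the mass vector, the bushes are independent, each rescaled bush is a Brownian CRT, and its attachment pole(s) are distributed as \emph{iid $\mu$-samples} from the rescaled bush. Re-rooting invariance is a one-point statement (the root of the CRT is exchangeable with a single sample); it does not by itself deliver the two-pole law of an internal-edge bush (that the pair $(e_x,e_y)$ is distributed as two independent samples), nor the conditional independence of the bushes given the masses, nor that the resulting three-way split of the size-biased coordinate is independent of the remaining coordinates. What you would actually need is the spinal (Bismut-type) decomposition of the CRT along the segment between two samples, or an iterated form of Aldous's recursive self-similarity at branch points — and note that iterating the branch-point decomposition naturally produces one-pole subtrees, so the internal-edge (two-pole) case requires a separate argument. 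This machinery exists in the literature (it underlies the Forman--Pal--Rizzolo--Winkel constructions), so the approach is salvageable by citation, but as written the heart of the induction is asserted rather than proved, and it is heavier than the statement itself: granting the sub-claim for one pole plus two genuine samples is already the $m=3$ base case, so without an independent proof of the bush decomposition the induction is close to circular. The surrounding steps — the size-biasing calculus for $\Dirhalf$ (uniform edge choice, parameter raised to $\tfrac32$, re-aggregation after a $\Dir(\tfrac12,\tfrac12,\tfrac12)$ split), and the combinatorial fact $\#\Clad[m+1]=(2m-3)\,\#\Clad$ making uniform insertion preserve uniformity of the shape — are all correct.
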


\begin{proof} We follow Aldous's proof of \cite[Theorem~2]{Aldous1994}
and show a local limit theorem for the subtree mass distribution of a uniform $N$-cladogram (with uniform
distribution on the leaves) as $N\to\infty$. Because the uniform $N$-cladogram converges to the Brownian CRT,
this implies the claim.

Fix $m\in\N$ and $\testtree\in \Clad$. Let $N\ge m$.
Denote by $\pi_{N,m}\colon \Clad[N]\to\Clad$ the projection map which sends an $N$-cladogram $(T,c,\zeta)$
to the $m$-cladogram spanned by the first $m$ leaves, i.e., for $T_m:=c\(\zeta(\{1,...,m\})^3\)$,
\begin{equation}
\label{e:017}
   \pi_{N,m}(T,c,\zeta) :=\bigl(T_m,\, c\restricted{T_m},\, \zeta\restricted{\{1,...,m\}}\bigr).
\end{equation}
For $n=(n_e)_{e\in\edge(\mathfrak{t})}$ with $\sum_{e} n_e= N$, let $q_N(n)$ be the probability that
the first $m$ leaves of a uniform $N$-cladogram span the $m$-cladogram $\testtree$, and the numbers of leaves of
the $N$-cladogram in the subtrees corresponding to the edges of $\testtree$ are given by the vector $n$,
i.e.,
\begin{equation}
\label{e:016}
\begin{aligned}
   q_N(n) &:= \frac1{\#\Clad[N]} \#\Bset{(T,c,\zeta) \in \pi_{N,m}^{-1}(\testtree)}
			{N\cdot \eta_{(T,c,\zeta)}\(\zeta(1),\ldots,\zeta(m)\) = n} \\
   &= \frac1{\#\Clad[N]} \cdot
   	\frac{(N-m)!\prod_{e\in\extedge(\mathfrak{t})}\#\Clad[n_e+1] \prod_{e\in\intedge(\mathfrak{t})}\#\Clad[n_e+2]}
		{\prod_{e\in\extedge(\mathfrak{t})}(n_e-1)! \prod_{e\in\intedge(\mathfrak{t})}n_e!}.
\end{aligned}
\end{equation}
The first factor in the numerator together with the denominator counts the number of possibilities to distribute
the $N-m$ remaining leaves to the edges of the $m$-cladogram $\testtree$ (with quantities specified by $n$), and
the products in the numerator count the possibilities to give cladogram structure to the leaves associated to
every edge of $\testtree$. For an external edge $e\in\extedge(\mathfrak{t})$, this is the number of
$(n_e+1)$-cladograms (identify the additional leaf with the branch point of $\testtree$ it is attached to).
For an internal edge, we need two additional leaves. Recall that
\begin{equation} \label{e:019}
   \#\Clad[N] = (2N-5)!! = \tfrac{(2N-4)!}{2^{N-2}(N-2)!} \approx (N-2)! \cdot 2^{(N-2)}(\pi (N-2))^{-\frac12},
\end{equation}
where $\approx$ means that the quotient tends to $1$ as $N\to\infty$, and we have applied the
Stirling formula.

Fix $\eta=(\eta_1,...,\eta_{2m-3})\in\Delta_{2m-3}$ with $\eta_i>0$ for $i=1,\ldots,2m-3$. For $n_i=n_i(N)$ with
$\sum_{i=1}^{2m-3} n_i = N$ and $N^{-1}n_i \to \eta_i$ (in particular $n_i\to\infty$), as $N\to\infty$, we
obtain (using the convention that the first $m$ edges are external)
\begin{equation}
\label{qNm}
\begin{aligned}
q_N(n) &= \frac{1}{\#\Clad[N]} \cdot (N-m)!
		\cdot \prod_{i=1}^m \frac{\#\Clad[n_i+1]}{(n_i-1)!} \prod_{i=m+1}^{2m-3} \frac{\#\Clad[n_i+2]}{n_i!}
			\\
	&\approx
	 \frac{\sqrt{\pi(N-2)}}{(N-2)! \cdot 2^{N-2}} \cdot (N-m)!\prod_{i=1}^m2^{n_i-1}(\pi (n_i-1))^{-\frac{1}{2}}\prod_{i=m+1}^{2m-3}2^{n_i}(\pi n_i)^{-\frac{1}{2}}
	  \\
       &=
	 \sqrt{(N-2)} \cdot \tfrac{(N-m)!}{(N-2)!}\cdot 2^{2-m}\pi^{\frac12-\frac{2m-3}2}\prod_{i=1}^{m}(n_i-1)^{-\frac{1}{2}}
	 \prod_{i=m+1}^{2m-3}n_i^{-\frac12}
	 \\
	&\approx
		N^{-(m-\frac52)} \cdot (2\pi)^{2-m} \cdot N^{-(m-\frac{3}{2})} \cdot
	 \(\eta_1\eta_2\cdot ...\cdot\eta_{2m-3}\)^{-\frac12}
	 \\
	&=
	 N^{-(2m-4)}\cdot \frac1{\#\Clad[m]} \cdot \frac{\Gamma(\tfrac{2m-3}{2})}
	 {\Gamma(\frac{1}{2})^{2m-3}}\(\eta_1\eta_2\cdot ...\cdot\eta_{2m-3}\)^{-\frac12}.
\end{aligned}
\end{equation}
        This gives the claimed Dirichlet density on the $(2m-3)$-simplex in the limit.
\end{proof}

\begin{proposition}[Convergence to the CRT]\label{P:Aldlong}
Let\/ $(X_t)_{t\ge 0}$ be the Aldous diffusion started in\/ $\smallx\in\mathbb{T}_2^{\mathrm{cont}}$, and\/
${\mathcal X}_{\mathrm{CRT}}$ the algebraic measure Brownian CRT.  Then
\begin{equation}
\label{e:Aldlong}
   X_t \,\Tto\, {\mathcal X}_{\mathrm{CRT}}.
\end{equation}
In particular, the algebraic measure Brownian CRT is the unique invariant distribution of the Aldous diffusion.
\end{proposition}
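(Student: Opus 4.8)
The plan is to reduce the long-time behavior of the infinite-dimensional diffusion $X$ to the ergodicity of the finite-state dual chains via the duality of Proposition~\ref{P:duality}. Fix $\smallx=(T,c,\mu)\in\Taldous$ and apply that duality with $P_0=\delta_\smallx$: for every $m\in\N$ and $\testtree\in\Clad$,
\begin{equation*}
	\Exp_\smallx\bigl[\Phi^{m,\testtree}(X_t)\bigr]
	= \mathbb{E}^Y_\testtree\bigl[\Phi^{m,Y^m_t}(\smallx)\bigr]
	= \sum_{\testtree'\in\Clad}\mathbb{P}^Y_\testtree\bigl(Y^m_t=\testtree'\bigr)\,\Phi^{m,\testtree'}(\smallx),
\end{equation*}
where $Y^m$ is the $\Clad$-valued Aldous chain. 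This converts the problem into the standard equilibration of a continuous-time Markov chain on the finite set $\Clad$.

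Next, I would use that $Y^m$ is irreducible on $\Clad$ with the uniform distribution as its reversible stationary law, so that $\mathbb{P}^Y_\testtree(Y^m_t=\testtree')\to(\#\Clad)^{-1}$ as $t\to\infty$, geometrically fast and uniformly in $\testtree,\testtree'$ since $\Clad$ is finite. Because $\mu$ is non-atomic and the branch point set is at most countable, the $m$ independent samples are a.s.\ distinct and avoid branch points, so that $\sum_{\testtree'\in\Clad}\Phi^{m,\testtree'}(\smallx)=1$ for every $\smallx\in\Taldous$. Passing to the limit in the display above (each $\Phi^{m,\testtree'}\le1$) yields
\begin{equation*}
	\lim_{t\to\infty}\Exp_\smallx\bigl[\Phi^{m,\testtree}(X_t)\bigr]
	= \frac{1}{\#\Clad}
	= \Exp_{\mathrm{CRT}}\bigl[\Phi^{m,\testtree}\bigr],
\end{equation*}
the last equality being the defining property of $\X_{\mathrm{CRT}}$ (Definition~\ref{Def:amCRT}). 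Since the shape polynomials form a uniformly dense subalgebra of $\C(\Taldous)$ (Lemma~\ref{l:polalgebra}) and $\Taldous$ is compact (Proposition~\ref{P:compact}), convergence of all these expectations upgrades to weak convergence $X_t\Tto\X_{\mathrm{CRT}}$. As the error bound is independent of $\smallx$, the convergence is uniform in the starting point, so by dominated convergence the same weak limit holds from an arbitrary initial law $P_0$.

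Finally, invariance and uniqueness follow from the Feller property (Corollary~\ref{c:Feller}). Writing $\nu:=\law(\X_{\mathrm{CRT}})$ and using $\int F\,\dx\nu=\lim_{t\to\infty}S_tF(\smallx)$ together with $S_sF\in\C(\Taldous)$, I obtain for all $s\ge0$ and $F\in\C(\Taldous)$
\begin{equation*}
	\int S_sF\,\dx\nu=\lim_{t\to\infty}S_t(S_sF)(\smallx)=\lim_{t\to\infty}S_{t+s}F(\smallx)=\int F\,\dx\nu,
\end{equation*}
so $\nu$ is invariant; moreover any invariant law $\pi$ satisfies $X_t\sim\pi$ for all $t$ when $X_0\sim\pi$, while $X_t\Tto\X_{\mathrm{CRT}}$, forcing $\pi=\nu$. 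The crux of the argument is the ergodicity input for the dual chain: one must know $Y^m$ is irreducible on $\Clad$ so that it equilibrates to the uniform law, which is exactly the mechanism transferring the mixing of the finite cladogram chains into convergence of the continuum diffusion. Everything else (the identity $\sum_{\testtree'}\Phi^{m,\testtree'}\equiv1$ on $\Taldous$, density of shape polynomials, and the Feller-semigroup manipulation) is routine.
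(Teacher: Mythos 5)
Your proposal is correct and follows essentially the same route as the paper: duality with the finite $\Clad$-valued Aldous chain, equilibration of that finite chain to the uniform law, the identity $\sum_{\testtree'\in\Clad}\Phi^{m,\testtree'}=1$ on $\Taldous$, the convergence-determining property of shape polynomials on the compact space $\Taldous$, and the Feller property for invariance and uniqueness. Your only additions are to spell out the semigroup computation $\int S_sF\,\dx\nu=\int F\,\dx\nu$ and the uniform-in-$\smallx$ extension to random initial laws, which the paper leaves implicit.
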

\begin{proof}
Fix $m\in\N$ and $\testtree \in\Clad$. Let $(Y_t)_{t\ge 0}$ be the Aldous chain on $m$-caldograms started in
$Y_0=\testtree$.
Then, for $\Phi^{m,\testtree} \in \CD(\AldGen)$ as in \eqref{s:009}, we have by duality
(Proposition~\ref{P:duality})
\begin{equation}
	\Exp\(\Phi^{m,\testtree}(X_t)\) = \sum_{\testtree'\in\Clad} \mathbb{P}\{Y_t=\testtree'\}
	\Phi^{m,\testtree'}(\smallx).
\end{equation}
Because the uniform distribution on $\Clad$ is the unique reversible distribution of the Aldous chain (see
\cite{MR1774749}), $\lim_{t\to\infty}\mathbb{P}\{Y_t=\testtree'\}=\frac1{\#\Clad}$ for every
$\testtree'\in\Clad$. Because $\sum_{\testtree'\in\Clad} \Phi^{m,\testtree'} = 1$ on $\Taldous$, this means
\begin{equation}
	\lim_{t\to\infty} \Exp\(\Phi^{m,\testtree}(X_t)\) = \frac1{\#\Clad}
		= \Exp\(\Phi^{m,\testtree}(\X_\mathrm{CRT})\) .
\end{equation}
Because $\CD(\AldGen)$ is convergence determining for probability measures on $\Taldous$, this proves
\eqref{e:Aldlong}.
Invariance of the law of $\X_\mathrm{CRT}$ follows from the convergence \eqref{e:Aldlong} together with the
Feller property (Corollary~\ref{c:Feller}).
\end{proof}

In summary, we have now proven the first two theorems.
\begin{proof}[Proof of Theorems~\ref{T:Aldous} and \ref{T:diffappr}]
	Well-posedness of the martingale problem is shown in Corollaries~\ref{C:exist} and \ref{C:unique}.
	Continuous paths and tightness of the sequence of Aldous chains are shown in Corollary~\ref{C:tight}.
	Furthermore, every limit process satisfies the martingale problem (Corollary~\ref{C:tight}) and this implies
	convergence because of the uniqueness shown in Corollary~\ref{C:unique}. The Feller property is shown in
	Corollary~\ref{c:Feller}, and unique ergodicity with the algebraic measure Brownian CRT as invariant
	distribution is shown in Proposition~\ref{P:Aldlong}.
\end{proof}

\section{On the dynamics of the sample subtree mass vector}
\label{S:subtreemass}
In this section, we further study the Aldous diffusion on binary, algebraic non-atomic measure trees and prove
Theorem~\ref{t:massgen}.
Recall from Proposition~\ref{P:002} that under the annealed law of the Brownian CRT, the sample tree shape is
uniform and independent of the vector of subtree masses branching of the cladogram spanned by the sample.
Furthermore, the vector of subtree masses is Dirichlet distributed.
Next, we study the infinitesimal evolution of the quenched law of this vector under the dynamics of the Aldous diffusion in
the case of sample size $m=3$.

Recall the definition of the components $\Sub_v(u)$, $u,v\in T$, from \eqref{e:equiv}, and from \eqref{e:eta3}
that $\ueta(\uu)$ with $\uu=(u_1,u_2,u_3) \in T^3$ denotes the vector of the three masses of the components connected to $c(\uu)$,
i.e.
\begin{equation}\label{e:etarepeat}
	\ueta(\uu) = \(\eta_i(\uu)\)_{i=1,2,3} = \( \mu\(\Sub_{c(\uu)}(u_i)\) \)_{i=1,2,3}.
\end{equation}
With a slight abuse of notation, we also denote for $v\in\br(T)$ by $\underline\eta(v)$ the $\mu$-masses of the three
components of $T\setminus \{v\}$ (ordered decreasingly for definiteness), so that $\ueta(\uu)=\ueta(c(\uu))$ up
to a permutation of the entries of the vector.
Also recall that for mass-polynomials
\begin{equation} \label{e:Phirepeat}
   \Phi^{f}(\smallx) = \int f\(\ueta(c(\uu))\)\, \mu^{\otimes 3}(\mathrm{d}\uu)
\end{equation}
with $f\in\C^2\([0,1]^3\)$ and $\smallx=(T,c,\mu)\in \Tbin$, we define
\begin{equation}
\label{e:generatorrepeat}
\begin{aligned}
   \AldGenMass \Phi^f(T,c,\mu)
	&= \int_{T^3}\dx\mu^{\otimes3}\,\Bigl( 2\sum_{i,j=1}^3 \eta_i(\delta_{ij} - \eta_j) \partial_{ij}^2 f(\underline\eta)
		+3\sum_{i=1}^3 (1-3\eta_i)\partial_i f(\underline\eta) \\
  &\phantom{{}+\int_{T^3}\dx\mu^{\otimes3}\,\Bigl({}} + \tfrac{1}{2}\sum_{i,j=1,\, i\ne j}^3 \Theta_{i,j}f(\ueta)
		+ \sum_{i=1}^3\(f(e_i)-f(\underline\eta)\)\Bigr).
\end{aligned}
\end{equation}
see \eqref{e:Phi} and \eqref{e:generator}, and the definition of migration operators $\Theta_{i,j}$ in
\eqref{e:migration}.
Note that $\Phi^f \in \C(\Tbin)$ by Proposition~\ref{P:conveq}, and
because the functions $\Theta_{ij}f$ are continuous due to continuous differentiability of $f$,
$\AldGenMass \Phi^f$ is also a mass-polynomial. In particular, $\AldGenMass\Phi^f\in \C(\Taldous)$.

\begin{enremark}\label{r:Aldsym}
\item	If $f\in\C^2([0,1]^3)$ is symmetric, we can use the symmetry of the sampling procedure and rewrite
	\eqref{e:generatorrepeat} as
\begin{equation}\label{e:Aldsym}\begin{aligned}
	\AldGenMass \Phi^f(\smallx) &= 3 \int_{T^3} \dx\mu^{\otimes3}\, \Bigl(
		2\eta_1(1-\eta_1) \partial_{11}^2 f(\ueta) - 4\eta_1\eta_2\partial_{12}^2 f(\ueta)
		+ 3(1-3\eta_1)\partial_1f(\ueta)\\
	&\phantom{{}=3\int} + \Theta_{1,2}f(\ueta) + f(1,0,0) - f(\ueta) \Bigr).
\end{aligned}\end{equation}
This helps to reduce the number of terms in explicit calculations.
\item We can often assume $f$ to be symmetric. If $f$ is not necessarily symmetric, we use that
$\Phi^f=\Phi^{\tilde{f}}$ for the symmetrization $\tilde{f}$ of $f$ defined as follows.
Given a permutation $\pi$ of
$\{1,2,3\}$, define $f_\pi(x_1,x_2,x_3) := f(x_{\pi(1)},x_{\pi(2)},x_{\pi(3)})$.
Then $\tilde{f} = \frac16 \sum_{\pi \in S_3} f_\pi$.
\end{enremark}

For the proof of Theorem~\ref{t:massgen}, we do not use the martingale problem of Theorem~\ref{T:Aldous}, because
approximating the mass polynomial of degree three by shape polynomials, though possible in theory, seems
difficult in praxis. Instead, we show that uniform convergence of generators holds also for mass polynomials of
degree three, and use the diffusion approximation of Theorem~\ref{T:diffappr}.
For $N\in\N$, recall the state space $\mathbb{T}_2^{N}$ from \eqref{e:T2N} and the Aldous chain with generator
$\Omega_N$ from \eqref{e:genN}.

\begin{proposition}[Subtree mass under the Aldous diffusion] \label{p:massgen}
For all test functions\/ $\Phi^f$ of the form \eqref{e:Phi} with\/
$f\colon[0,1]^3\to\R$ twice continuously differentiable,
\begin{equation}\label{e:result}
	\lim_{N\to\infty} \sup_{\smallx\in \Tbin^N}
		\bigl| \Omega_N\Phi^f(\smallx) - \AldGenMass\Phi^f(\smallx) \bigr|
		= 0.
\end{equation}
\end{proposition}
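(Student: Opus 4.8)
The plan is to follow the template of the proof of Proposition~\ref{P:generator}: for $\smallx\in\Tbin^N$ with minimal representative $(T,c,\mu)$ and $\eps=\tfrac1N$, write
\[
  \Omega_N\Phi^f(\smallx)=\sum_{(x,e)\in\lf(T)\times\edge(T)}\bigl(\Phi^f(\smallx_z)-\Phi^f(\smallx)\bigr),\qquad z=(x,e),
\]
with $\smallx_z=(\Tb,\cb,\mu+\eps\delta_{y_e}-\eps\delta_x)$ the state after the Aldous move as in \eqref{e:012}, and then analyse each difference. The essential new difficulty compared with shape polynomials is that the integrand $f(\ueta(\uu))$ of $\Phi^f$ depends on the measure $\mu$ through the subtree masses $\eta_i(\uu)=\mu(\Sub_{c(\uu)}(u_i))$, and not merely through the sampling measure $\mu^{\otimes3}$. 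Thus an Aldous move changes $\Phi^f$ for two reasons, and since the candidate limit $\AldGenMass$ of \eqref{e:generatorrepeat} contains a second-order (diffusion) part, I would Taylor-expand $f$ to second order in $\eps$, using that the number of moves is $N(2N-3)\sim 2N^2$ so that $(\#\text{moves})\cdot\eps^2=O(1)$ realises the diffusive scaling.

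First I would treat the \emph{bulk} contribution, where the three sampled points are distinct leaves whose branch point $v=c(\uu)$ is a genuine branch point left unaffected by the move, so that $\cb(\uu)=v$ and the mass of the $i$-th component shifts by $\eps\bigl(\one_{y_e\in\Sub_v(u_i)}-\one_{x\in\Sub_v(u_i)}\bigr)$. Summing the quadratic Taylor term over $(x,e)$ then produces exactly the diffusion part $2\sum_{i,j}\eta_i(\delta_{ij}-\eta_j)\partial_{ij}^2 f(\ueta)$: this uses that $x$ and $y_e$ each lie in a single component, together with the binary counting identity that a subtree hanging off $v$ with $L_i=N\eta_i$ leaves has $2L_i-1$ vertices (``$\#\text{edges}=2\,\#\text{leaves}-1$'', the analogue of \eqref{e:018}), which turns the leaf sums into $2\eta_i$ and the cross sums into $2\eta_i\eta_j$. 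The same identity applied to the \emph{linear} Taylor term contributes a drift $-\sum_i(1-3\eta_i)\partial_i f(\ueta)$, reproducing precisely (twice) Aldous's generator \eqref{e:FVnegative}.

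It remains to collect the \emph{degenerate} contributions: the terms in which a resampled point lands on the modified region $\{x_e,y_e\}$ (the analogue of the single- and double-replacement terms $A_k,B_{k,j}$ of \eqref{e:Ak}--\eqref{e:Bk}), and those in which the removed leaf $x$ coincides with, or is adjacent to, a sampled point $u_i$, so that $v=c(\uu)$ is relocated or destroyed. Although each such event has $\mu^{\otimes3}$-probability $O(\eps)$, it occurs with multiplicity $O(N)$ over all moves and therefore survives in the limit. I expect these to supply the remaining pieces of $\AldGenMass$: destruction of the branch point $v$ (one component emptying) gives the jump terms $\sum_i(f(e_i)-f(\ueta))$, absorption of one component into a neighbour gives the migration terms $\tfrac12\sum_{i\ne j}\Theta_{i,j}f(\ueta)$, and the remaining contributions raise the drift coefficient from its bulk (Aldous) value to $3(1-3\eta_i)$.

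The hard part will be this degenerate analysis, and in particular recovering the singular migration operator $\Theta_{i,j}$ of \eqref{e:migration}: its prefactor $\tfrac{\one_{x_1>0}}{x_1}$ and its boundary term $\one_{x_1=0}(\partial_2f-\partial_1f)$ must emerge from the discrete sum over the rare events exhausting a component whose mass is of order $1/N$ (i.e.\ containing $O(1)$ leaves), and the two regimes must be matched continuously as $\eta_i\downarrow0$, using $f\in\C^2$ to control the Taylor remainders. Once all four groups of terms are identified, I would bound every remainder by $C(\lVert f\rVert_{\C^2})\,N^{-1}$ and conclude the uniform limit \eqref{e:result}; the uniformity over $\smallx\in\Tbin^N$ is automatic since, exactly as for the estimate \eqref{e:Omega2}, all constants depend only on $\lVert f\rVert_{\C^2}$ and on $N$, and never on the particular tree.
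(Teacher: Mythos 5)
Your bulk computation is correct and coincides with the paper's Lemma~\ref{l:deltaf}: freezing everything except the three component masses, a second-order Taylor expansion combined with the leaf/edge proportion identity (your ``$\#\mathrm{edges}=2\,\#\mathrm{leaves}-1$'', which is Lemma~\ref{l:lambda}) yields exactly the Wright--Fisher part with Aldous's drift $-\sum_i(1-3\eta_i)\partial_i f$, i.e.\ twice \eqref{e:FVnegative}. You also correctly anticipate \emph{which} correction terms must appear (migration, jumps, drift coefficient raised from $-1$ to $3$). But the proposal stops exactly where the paper's proof begins: these corrections are only asserted to ``emerge'', and deriving them is the entire content of Steps~1--3. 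The organizing device you are missing is the passage to the branch point distribution $\nu=\mu^{\otimes3}\circ c^{-1}$: for symmetric $f$ one has $\Phi^f(\smallx)=\sum_{v\in\br(\Tb)}\nu\{v\}f(\ueta(v))$, so that $\Omega_N\Phi^f$ splits into the mass-change part at frozen $\nu$ (your bulk term) plus the $\nu$-change part $\sum_z\sum_v(\nu_z\{v\}-\nu\{v\})f(\uetaz(v))$, which can be evaluated \emph{exactly} using $\nu\{v\}=6\eta_1(v)\eta_2(v)\eta_3(v)$ together with Lemma~\ref{l:lambda} and the matching Lemma~\ref{l:edgematch}. In your direct route, by contrast, the single-replacement sums (the analogues of $A_k$) are individually of order $N$ and must cancel between the $y_e$- and $x$-replacements; ``bound every remainder by $C(\|f\|_{\C^2})N^{-1}$'' is not available until some such exact bookkeeping is in place.

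More seriously, the mechanism you propose for the hardest piece would fail. The migration term $\Theta_{i,j}f(\ueta)$ is of order one at \emph{every} interior mass configuration --- for $x_1$ bounded away from $0$ the quotient $\tfrac1{x_1}\bigl(f(0,x_2+x_1,x_3)-f(\ux)\bigr)$ is not small --- so it cannot be produced by rare events in which a component of mass $O(1/N)$ is exhausted; such events only affect a neighbourhood of the boundary $\{\eta_i\approx 0\}$, and no two-regime matching as $\eta_i\downarrow 0$ occurs in the proof (at genuine branch points $\eta_j\ge\eps$, and the boundary part of \eqref{e:migration} is merely the continuous extension, controlled uniformly by $f\in\C^2$). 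What actually happens is this: every move reinserting the leaf at an edge $e$ creates branch-point weight $\nu_z\{x_e\}$ of order $\eps$ at the midpoint $x_e$, from which two of the three components of an adjacent branch point $v$ appear \emph{merged}; summing over the $N$ removed leaves and converting the edge sum into a branch-point sum via Lemma~\ref{l:edgematch} yields the weight $6\eta_i(1-\eta_i)=\nu\{v\}\bigl(\eta_j^{-1}+\eta_k^{-1}\bigr)$, so the singular prefactor $1/x_1$ is pure algebra from the cubic form of $\nu\{v\}$, valid at all mass scales. The other half of the difference quotient defining $\Theta_{i,j}$ comes from the systematic first-order decrease of $\nu\{v\}$ at old branch points, computed in \eqref{e:Bv}, which also supplies the extra drift $+4\sum_i(1-3\eta_i)\partial_i f$ (hence the coefficient $3$ after adding your bulk $-1$) and the loss $-3f(\ueta)$. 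Your attribution of the jump terms to ``destruction of $c(\uu)$'' is likewise off: in the fixed ambient tree $(\Tb,\cb)$ of \eqref{e:012} the branch point of a fixed triple is never relocated or destroyed (only the measure moves), and the gain $3f(1,0,0)=\sum_i f(e_i)$ arises from insertions at the $N$ external edges, where $x_e$ has two components of mass $\eps$ --- the term $\tfrac12 Ng_\eps(1-\eps,\eps)$ in \eqref{e:newbr}. So the gap is genuine: the heart of the proposition is the $\nu$-change computation, which is absent, and the heuristic you offer in its place points in the wrong direction.
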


From here we can prove Theorem~\ref{t:massgen} by standard arguments.

\begin{proof}[Proof of Theorem~\ref{t:massgen}]
	Let $X=(X_t)_{t\ge 0}$ be the Aldous diffusion on $\Taldous$.
	Due to Proposition~\ref{p:001}, there exist $\Tbin^N$-valued random variables $X^N_0$ such that $X^N_0$
	converges in law to $X_0$. By Theorem~\ref{T:diffappr}, the Aldous chains $X^N$ started in $X^N_0$
	converge in law to $X$.  Furthermore, $\AldGenMass$ maps into $\C(\Tbin)$.
	Hence the same proof as Lemma~4.5.1 in \cite{EthierKurtz86} shows that \eqref{e:071} follows from
	Proposition~\ref{p:massgen}.
\end{proof}

\allowdisplaybreaks	Recall the intrinsic metric $r_\mu$ on an algebraic measure tree $(T,c,\mu)$, as defined in \eqref{e:metric}.
Before proving Proposition~\ref{p:massgen}, we show how to use the extended martingale problem from
Theorem~\ref{t:massgen} to calculate the annealed average $r_\mu$-distance in the algebraic measure Brownian CRT.

\begin{corollary}[Mean average distance of the Brownian CRT]\label{cor:001}
Let\/ $\X_\mathrm{CRT}=(T,c,\mu)$ be the algebraic measure Brownian CRT. Then
\begin{equation}
\label{e:070}
    \mathbb{E}_{\mathrm{CRT}}\Big[\int_{T^2}r_\mu(x,y)\,\mu^{\otimes 2}(\mathrm{d}(x,y))\Big]=\tfrac25.
\end{equation}
\end{corollary}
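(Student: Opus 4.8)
The plan is to reduce $\mathbb{E}_{\mathrm{CRT}}\bigl[\int r_\mu\,\mathrm{d}\mu^{\otimes2}\bigr]$ to the expectation of a degree\nbd$3$ mass polynomial $\Phi^g$, and then to pin down $\mathbb{E}_{\mathrm{CRT}}[\Phi^g]$ by combining the invariance of the CRT with the extended martingale problem of Theorem~\ref{t:massgen}. For the reduction, first note that $\nu=\nu_{\X_{\mathrm{CRT}}}$, being finite, has at most countably many atoms; since $\mu$ is non-atomic this forces $\nu(\{x\})=0$ for $\mu$\nbd a.e.\ $x$, so by \eqref{e:metric} we have $r_\mu(x,y)=\nu([x,y])$ for $\mu^{\otimes2}$\nbd a.e.\ $(x,y)$. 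Writing $\nu([x,y])=\mu^{\otimes3}\{\uu:\,c(\uu)\in[x,y]\}$ and applying Fubini, I would integrate out $x,y$ for fixed $\uu$. As $\mu$ is non-atomic, $v:=c(\uu)$ is $\mu^{\otimes3}$\nbd a.e.\ a genuine degree\nbd$3$ branch point, and two independent $\mu$\nbd points satisfy $v\in[x,y]$ exactly when they fall into different components of $T\setminus\{v\}$; since the three component masses are $\eta_1(\uu),\eta_2(\uu),\eta_3(\uu)$ with $\sum_i\eta_i(\uu)=1$ a.s., this probability equals $1-\sum_i\eta_i(\uu)^2$. Hence
\begin{equation*}
  \int_{T^2}r_\mu(x,y)\,\mu^{\otimes2}(\mathrm{d}(x,y))
  = \int_{T^3}\Bigl(1-\sum_{i=1}^3\eta_i(\uu)^2\Bigr)\mu^{\otimes3}(\mathrm{d}\uu)
  = \Phi^{g}(\X_{\mathrm{CRT}}),
\end{equation*}
with $g(\ux):=1-(x_1^2+x_2^2+x_3^2)\in\C^2([0,1]^3)$.

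Next I would exploit stationarity. By Proposition~\ref{P:Aldlong} the law of $\X_{\mathrm{CRT}}$ is invariant for the Aldous diffusion $X$. Starting $X$ in this law and taking expectations in the martingale identity \eqref{e:071} of Theorem~\ref{t:massgen}, the contribution $\mathbb{E}[\Phi^f(X_t)]-\mathbb{E}[\Phi^f(X_0)]$ vanishes for every $t$; since $\AldGenMass\Phi^f$ is continuous on the compact space $\Taldous$, Fubini then gives $0=-t\,\mathbb{E}_{\mathrm{CRT}}[\AldGenMass\Phi^f]$, that is,
\begin{equation*}
  \mathbb{E}_{\mathrm{CRT}}\bigl[\AldGenMass\Phi^f\bigr]=0
  \qquad\text{for all }f\in\C^2([0,1]^3).
\end{equation*}

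I would apply this to $f(\ux)=x_1^2+x_2^2+x_3^2$, for which $\partial_{11}^2f=2$, $\partial_{12}^2f=0$, $\partial_1f(\ux)=2x_1$, $f(1,0,0)=1$, and a short computation shows that both branches of \eqref{e:migration} agree to give $\Theta_{1,2}f(\ux)=2x_2$. Substituting into the symmetric form \eqref{e:Aldsym} of the generator from Remark~\ref{r:Aldsym}, the integrand reduces to $4\eta_1(1-\eta_1)+6\eta_1(1-3\eta_1)+2\eta_2+1-\sum_i\eta_i^2 = 10\eta_1+2\eta_2+1-23\eta_1^2-\eta_2^2-\eta_3^2$. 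Setting $a:=\mathbb{E}_{\mathrm{CRT}}[\int\eta_1\,\mathrm{d}\mu^{\otimes3}]$ and $b:=\mathbb{E}_{\mathrm{CRT}}[\int\eta_1^2\,\mathrm{d}\mu^{\otimes3}]$, exchangeability of $(\eta_1,\eta_2,\eta_3)$ under the annealed law together with $\sum_i\eta_i=1$ gives $a=\tfrac13$, and the previous display yields $0=3(12a+1-25b)=3(5-25b)$, so $b=\tfrac15$. Combining this with the reduction and using exchangeability once more,
\begin{equation*}
  \mathbb{E}_{\mathrm{CRT}}\Bigl[\int_{T^2}r_\mu\,\mathrm{d}\mu^{\otimes2}\Bigr]
  = \mathbb{E}_{\mathrm{CRT}}[\Phi^{g}] = 1-3b = \tfrac25 .
\end{equation*}

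The main obstacle I expect is the first step: justifying the almost-sure identity $r_\mu(x,y)=\nu([x,y])$ (i.e.\ that the atomic correction in \eqref{e:metric} drops out) and correctly evaluating the inner integral as $1-\sum_i\eta_i^2$, which relies on $c(\uu)$ being a genuine degree\nbd$3$ branch point $\mu^{\otimes3}$\nbd a.e. Everything after that is a short and essentially mechanical generator computation. As a sanity check, one could instead read off $b=\mathbb{E}[X_1^2]=\tfrac15$ directly from the $\Dirhalf$ law of the subtree masses supplied by Proposition~\ref{P:002}, thereby bypassing the martingale problem entirely.
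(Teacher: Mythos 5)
Your proposal is correct and follows essentially the same route as the paper: both reduce the average distance to a degree-$3$ mass polynomial (your $g(\ux)=1-\sum_i x_i^2$ agrees with the paper's choice $2(x_1x_2+x_2x_3+x_1x_3)$ on the simplex, where $\ueta$ lives a.s.) and then combine invariance of the CRT (Proposition~\ref{P:Aldlong}) with the extended martingale problem of Theorem~\ref{t:massgen} to get $\mathbb{E}_{\mathrm{CRT}}\bigl[\AldGenMass\Phi^f\bigr]=0$, followed by the same mechanical evaluation via \eqref{e:Aldsym}. Your value $b=\tfrac15$ is consistent with the paper's computation $\AldGenMass\Phi^f=5-25\Phi^f$ giving $\mathbb{E}_{\mathrm{CRT}}[\Phi^f]=\tfrac15$, and your Dirichlet sanity check via Proposition~\ref{P:002} is exactly the alternative the paper itself mentions.
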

\begin{proof}
First, we express the average distance $\Phi(\smallx) := \int r_\mu(x,y)\,\mu^{\otimes 2}(\mathrm{d}(x,y))$ for
$\smallx=(T,c,\mu) \in \Taldous$ as mass polynomial of degree three.
Recall the branch point distribution $\nu=\mu^{\otimes 3} \circ c^{-1}$ used in the definition of $r_\mu$.
\begin{equation}
\label{e:disntance}
\begin{aligned}
   \Phi(\smallx) &= \int \sum_{v\in\br(T)\cap(x,y)}\nu(\{v\})\,\mu^{\otimes 2}(\mathrm{d}(x,y))
   \\
   &= \sum_{v\in\br(T)}\nu(\{v\})\mu^{\otimes 2}\(\{(x,y):\,v\in(x,y)\}\)
   \\
   &= 2\sum_{v\in\br(T)}\nu(\{v\})\(\eta_1(v)\eta_2(v)+\eta_2(v)\eta_3(v)+\eta_1(v)\eta_3(v)\)
   \\
   &= 2\int \(\eta_1(\uu)\eta_2(\uu) + \eta_2(\uu)\eta_3(\uu) + \eta_1(\uu)\eta_3(\uu)\)
   		\,\mu^{\otimes 3}(\mathrm{d}\uu),
\end{aligned}
\end{equation}
i.e.\ $\Phi=2\Phi^f$ (on $\Taldous$) with $f(x,y,z):=xy+yz+xz$. From here, we could obtain \eqref{e:070} by a
direct computation using Proposition~\ref{P:002}. But it also follows easily from the invariance of the CRT
under the Aldous diffusion:
Because $f$ is symmetric, we obtain from Remark~\ref{r:Aldsym}
\begin{equation}\label{e:Phif}
\begin{aligned}
	\AldGenMass\Phi^f(\smallx) &= 3 \int \(-4\eta_1\eta_2 + 3(1-3\eta_1)(\eta_2+\eta_3)
		+ \frac{(\eta_1+\eta_2)\eta_3 - f(\ueta)}{\eta_1} - 3\eta_1\eta_2\)\,\dx\mu^{\otimes3} \\
	&= 3\int ( 5\eta_1 - 25\eta_1\eta_2)\,\dx\mu^{\otimes 3} = 5 - 25 \Phi^f(\smallx).
\end{aligned}
\end{equation}
Thus $\Exp_{\mathrm{CRT}}[\AldGenMass\Phi^f]=0$ implies 
$\Exp_{\mathrm{CRT}}[\Phi^f(\smallx)]=\frac15$.
\end{proof}

To prove Proposition~\ref{p:massgen}, fix $N\in\N$ and $\smallx=(T,c,\mu)\in \mathbb{T}_2^{N}$. We use some
notation from the proof of Proposition~\ref{P:generator}, in particular recall from \eqref{e:epsN} that $\eps$
and $\delta$ denote the inverse numbers of leaves and edges, respectively, and the extended tree $(\Tb, \cb)$
which allows to represent one Aldous move on the same tree (see Figure~\ref{f:Tbar}).
We consider $\mu$, $\nu$, and $\underline{\eta}$ to be defined on $(\Tb,\cb)$ and, for $z\in
\lf(T)\times\edge(T)$, we denote by $\mu_z$, $\nu_z$, and $\uetaz$ the corresponding objects
after the Aldous move $z$.
Because our trees are binary, the relation between the fraction of leaves and the fraction of edges in a subtree
can be easily related as follows.

\begin{lemma}[Proportion of leaves versus edges] \label{l:lambda}
	Let\/ $\smallx=(T,c,\mu)\in\mathbb{T}_2^N$, and\/ $S=\Sub_v(u)$ for some\/
	$v\in \br(T)$, $u\in T\setminus\{v\}$ a component.
	Let\/ $\ell(S)$ be the fraction of the edges contained in\/ $S$ (including the edge to\/ $v$). Then
	\begin{equation}
		\ell(S) = \mu(S)\cdot (1+3\delta) - \delta.
	\end{equation}
\end{lemma}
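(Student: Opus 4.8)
The plan is to reduce the statement to a pure vertex-count of the component $S$. Write $(T,c,\mu)$ for the minimal representative of $\smallx\in\mathbb{T}_2^N$; it has $N$ leaves, $N-2$ branch points (all of degree $3$), hence $2N-2$ vertices and $2N-3$ edges, and recall $\delta=\tfrac1{2N-3}$ from \eqref{e:epsN}. Put $k:=\#\bigl(\lf(T)\cap S\bigr)$, the number of leaves of $T$ lying in $S=\Sub_v(u)$. Since $\mu$ is uniform on the $N$ leaves, $\mu(S)=\tfrac kN$ immediately, so the whole point is to express $\ell(S)$ through $k$.

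First I would observe that, as a component of $T\setminus\{v\}$, the set $S$ is a connected subtree of $T$ and thus a tree with $n_S:=\#S$ vertices and $n_S-1$ internal edges; moreover exactly one edge of $T$ joins $v$ to $S$ (namely $\{v,w\}$, where $w\in S$ is the unique neighbour of $v$ inside $S$). Hence the number of edges ``contained in $S$ including the edge to $v$'' is $(n_S-1)+1=n_S$, so that $\ell(S)=\delta\,n_S$. The claim therefore reduces to the identity $n_S=2k-1$.

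The core step is to prove $n_S=2k-1$ by a degree count inside $S$. Every vertex of $S$ is either a leaf of $T$ (degree $1$) or a branch point of $T$ (degree $3$); writing $b$ for the number of branch points in $S$ gives $n_S=k+b$. In the subgraph induced on $S$ every vertex keeps its $T$-degree except $w$, whose degree drops by one because its edge to $v\notin S$ is removed; thus the degree sum over $S$ equals $k\cdot 1+b\cdot 3-1=k+3b-1$. On the other hand, since $S$ is a tree, this degree sum is $2(n_S-1)=2(k+b)-2$. Equating the two yields $b=k-1$, whence $n_S=k+b=2k-1$. (The degenerate case $k=1$, where $S=\{w\}$ is a single leaf with $n_S=1$, is covered by the same formula.) I expect this degree-bookkeeping — correctly charging the reduced degree of the boundary vertex $w$ and not forgetting the single-vertex component — to be the only genuinely delicate point; everything else is formal.

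Finally I would match the two sides algebraically. We have $\ell(S)=\delta(2k-1)$, while $\mu(S)(1+3\delta)-\delta=\tfrac kN(1+3\delta)-\delta$. Using $1+3\delta=1+\tfrac3{2N-3}=\tfrac{2N}{2N-3}=2N\delta$, this becomes $\tfrac kN\cdot 2N\delta-\delta=2k\delta-\delta=(2k-1)\delta=\ell(S)$, which is the asserted identity and completes the proof.
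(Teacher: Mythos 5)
Your proof is correct. In fact, the paper states Lemma~\ref{l:lambda} without any proof, treating it as elementary bookkeeping, so there is no argument in the paper to compare against; your degree count supplies exactly the routine details the authors leave implicit: in the minimal representative every vertex of $S$ has $T$-degree $1$ or $3$ and exactly one edge joins $S$ to $v$, so the degree-sum identity for the induced tree on $S$ forces $b=k-1$ branch points and $n_S=2k-1$ vertices, whence $\ell(S)=(2k-1)\delta$, which matches $\mu(S)(1+3\delta)-\delta$ via $1+3\delta=\tfrac{2N}{2N-3}=2N\delta$ and $\mu(S)=\tfrac{k}{N}$. Your handling of the boundary vertex $w$ (whose induced degree drops by one) and of the single-leaf component $k=1$ is exactly where a sloppy count would go wrong, and both are done correctly.
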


Recall the branch point distribution $\nu=\mu^{\otimes 3} \circ c^{-1}$.
The next Lemma shows the effect we would see if the branch point distribution remained unchanged.
This corresponds exactly to Aldous's original calculation (compare with (\ref{e:FVnegative}) but notice that our
chain runs at total rate $N(2N-3)$ rather than $N^2$ as in Aldous's case).
In what follows, we write $O(\eps)$ for terms which divided by $\eps=\eps_N$ are bounded uniformly in the tree (and
$N$), while the bound may depend on $f$, and similarly for $o(\eps)$ and so on.

\begin{lemma}[Wright-Fisher term with negative drift]\label{l:deltaf}
Let\/ $f$ be as in Proposition~\ref{p:massgen}. Then
\begin{equation}
\label{e:deltaf}
\begin{aligned}
   \MoveEqLeft\sum_{z\in \lf(T)\times\edge(T)}\; \sum_{v\in \br(T)} \nu\{v\} \(f(\uetaz(v)) - f(\underline\eta(v))\)
   \\
	&=
   \int \Bigl(2\sum_{i,j=1}^3 \eta_i(\delta_{ij}-\eta_j)\partial_{ij}^2 f(\ueta)
		   - \sum_{i=1}^3 (1-3\eta_i)\partial_i f(\ueta)\Bigr)\,\mathrm{d}\mu^{\otimes 3}  + o(1),
\end{aligned}
\end{equation}
and the\/ $o(1)$-term tends to zero as\/ $N\to\infty$ uniformly in the binary tree with\/ $N$ leaves.
\end{lemma}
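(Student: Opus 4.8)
The plan is to fix a single branch point $v$ and track how each individual Aldous move perturbs the $\mu$-masses of its three adjacent components, and then to sum these perturbations with a second-order Taylor expansion. By Remark~\ref{r:Aldsym} it suffices to treat symmetric $f$ (this is the case used in Proposition~\ref{p:massgen}), and then $f(\ueta(v))$ does not depend on the ordering of the three components. So fix $v\in\br(T)$; as the tree is binary, $v$ has degree $3$, with components $S_1,S_2,S_3$ of masses $\eta_1,\eta_2,\eta_3$ and edge-fractions $\ell_1,\ell_2,\ell_3$. The structural fact I will rely on is that \emph{every} move $z=(x,e)$ acts on $\ueta(v)$ as a pure mass transfer: writing $e_1,e_2,e_3$ for the standard basis of $\R^3$, if the sampled leaf $x$ lies in $S_i$ and the sampled edge $e$ lies in $S_j$, then deleting $x$ and reattaching it at $e$ preserves the three directions at $v$ and only shifts mass $\eps$ from $S_i$ to $S_j$, so that $\uetaz(v)=\ueta(v)+\eps(e_j-e_i)$, with no change when $i=j$. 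I will check that this remains correct for the borderline moves that collapse $v$ to degree $2$ (when $S_i=\{x\}$, i.e.\ $\eta_i=\eps\to0$) and for moves reattaching $x$ to one of the edges incident to $v$; in these cases $\uetaz(v)$ is simply defined by the transfer rule, the genuine discrepancy being the change of $\nu$ that this lemma deliberately sets aside.

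With this in hand, I would count moves and expand. Since $\mu$ is uniform on the $N$ leaves and there are $2N-3$ edges, the number of moves with $x\in S_i$ and $e\in S_j$ is $N\eta_i\cdot(2N-3)\ell_j=\eta_i\ell_j/(\eps\delta)$, and a second-order Taylor expansion (valid for $f\in\C^2$ with a remainder $o(\eps^2)$ uniform in the tree, since $\nabla^2 f$ is uniformly continuous on the compact cube) gives $f(\ueta+\eps(e_j-e_i))-f(\ueta)=\eps(\partial_j-\partial_i)f(\ueta)+\tfrac{\eps^2}{2}(\partial_j-\partial_i)^2f(\ueta)+o(\eps^2)$. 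Summing the first-order contributions over ordered pairs $i\ne j$ produces $\tfrac1\delta\sum_{i\ne j}\eta_i\ell_j(\partial_j-\partial_i)f(\ueta)$, into which I substitute $\ell_j=\eta_j(1+3\delta)-\delta$ from Lemma~\ref{l:lambda}. The would-be dominant piece, of order $\delta^{-1}\approx 2N$, is proportional to $\sum_{i\ne j}\eta_i\eta_j(\partial_j-\partial_i)f$, which vanishes identically under $i\leftrightarrow j$; what remains is exactly $-\sum_{i\ne j}\eta_i(\partial_j-\partial_i)f=-\sum_{i=1}^3(1-3\eta_i)\partial_if(\ueta)$. This exact cancellation of the $O(N)$ drift is the crux of the estimate and is what singles out the rate $N(2N-3)$.

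For the second-order contributions I would use $\tfrac{\eps}{2\delta}=1-\tfrac32\eps$ and $\ell_j=\eta_j+O(\eps)$ to get $\tfrac{\eps}{2\delta}\sum_{i\ne j}\eta_i\ell_j(\partial_j-\partial_i)^2f(\ueta)=\sum_{i\ne j}\eta_i\eta_j(\partial_j-\partial_i)^2f(\ueta)+O(\eps)$, and then expand the square and use $\sum_i\eta_i=1$ to identify the main term with $2\sum_{i,j=1}^3\eta_i(\delta_{ij}-\eta_j)\partial_{ij}^2f(\ueta)$. The Taylor remainders, summed over the at most $(\eps\delta)^{-1}$ moves, contribute $(\eps\delta)^{-1}o(\eps^2)=o(1)$, again uniformly in the tree since all constants depend only on $f$. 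Thus the inner sum over $z$ equals $g(\ueta(v))+o(1)$ uniformly, where $g(\underline x):=2\sum_{i,j}x_i(\delta_{ij}-x_j)\partial_{ij}^2f-\sum_i(1-3x_i)\partial_if$ is a symmetric function because $f$ is.

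It then remains to average over $v$ against $\nu$. For a degree-$3$ branch point, $c(\uu)=v$ forces the three sampled (hence distinct) leaves into the three distinct components, so $\nu\{v\}=6\,\eta_1(v)\eta_2(v)\eta_3(v)$; averaging the six assignments of $(u_1,u_2,u_3)$ to $(S_1,S_2,S_3)$ and using the symmetry of $g$ then gives $\sum_{v\in\br(T)}\nu\{v\}\,g(\ueta(v))=\int g(\ueta(\uu))\,\mu^{\otimes3}(\dx\uu)$, up to the coincident-point configurations whose total $\mu^{\otimes3}$-mass is $O(\eps)$ and which are absorbed into $o(1)$ as $g$ is bounded. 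Since the right-hand side is exactly the integral in \eqref{e:deltaf} and $\sum_v\nu\{v\}\le 1$, combining the last two paragraphs yields the claim. I expect the only genuinely delicate points to be the verification that every move — including those at the edges incident to $v$ and those collapsing $v$ to degree $2$ — is a clean mass transfer, and the exact symmetric cancellation of the $O(N)$ drift via Lemma~\ref{l:lambda}; everything else is routine and manifestly uniform in the tree.
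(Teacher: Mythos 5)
Your proposal is correct and follows essentially the same route as the paper's proof: classify moves by the pair of components of $T\setminus\{v\}$ containing the sampled leaf and edge, count them as $\eta_i\ell_j/(\eps\delta)$, Taylor-expand $f$ to second order, substitute $\ell_j=\eta_j(1+3\delta)-\delta$ from Lemma~\ref{l:lambda} so that the $O(N)$ drift cancels by antisymmetry in $i\leftrightarrow j$, and then sum over $v$ with weights $\nu\{v\}$ and apply Fubini. Your additional explicit checks (uniformity of the Taylor remainder, the borderline moves on the extended tree $(\Tb,\cb)$, and the identity $\nu\{v\}=6\eta_1\eta_2\eta_3$ behind the conversion $\sum_v\nu\{v\}g(\ueta(v))=\int g(\ueta)\,\dx\mu^{\otimes3}+o(1)$) are points the paper leaves implicit, and they are handled correctly.
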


\begin{proof}
According to Remark~\ref{r:Aldsym}, we may and do assume w.l.o.g.\ that $f$ is symmetric so that
$f(\ueta(\uu))$ depends on $\uu\in T^3$ only through $v=c(\ueta(\uu))$.
Fix $v\in\br(T)$.  To make the calculation more readable, we abbreviate $\eta_i = \eta_i(v)$ (ordered
decreasingly) in the following equations as long as $v$ is fixed.
Denote the three components of $T\setminus\{v\}$ by $S_i(v)$, $i=1,2,3$, ordered such that
$\eta_i=\mu(S_i)$.
For all $z=(u,e)\in\lf(T)\times\edge(T)$ with $u\in S_i(v)$ and $e\in S_j(v)$, a Taylor
expansion yields
\begin{equation}
\label{e:fz}
   f\(\uetaz(v)\)
   =
   \(1-\eps(\partial_i-\partial_j)+\tfrac{1}{2}\eps^2(\partial_i-\partial_j)(\partial_i-\partial_j)\)f(\ueta) +o(\eps^2).
\end{equation}
Using first this expansion and then Lemma~\ref{l:lambda}, we obtain
\begin{equation}
\begin{aligned}
\label{e:Av}
	A_v
  &:=
     \sum_{z\in \lf(T)\times\edge(T)} \(f(\uetaz)-f(\ueta)\)
     \\
     &=
     \sum_{i,j=1,\, i\ne j}^3 \tfrac{\eta_i}{\eps} \tfrac{\ell_j}{\delta}
   \eps\Bigl( \( \partial_j - \partial_i + \tfrac\eps2 (\partial_{ii} + \partial_{jj} - 2\partial_{ij})
   \)f(\ueta)+o(\eps) \Bigr)
     \\
	&=
   \sum_{i,j=1,\, i\ne j}^3 \eta_i \frac{\eta_j(1+3\delta) -\delta}{\delta}
   \Bigl( \( \partial_j - \partial_i + \tfrac\eps2 (\partial_{ii} + \partial_{jj} - 2\partial_{ij})
   \)f(\ueta)+o(\eps) \Bigr).
\end{aligned}
\end{equation}
As the highest order term is anti-symmetric in $i\not=j$, i.e.\ $\sum_{i\not=j=1}^3
\eta_i\eta_j(\partial_j - \partial_i)f(\underline{\eta}) = 0$, and $\frac\eps\delta=2+O(\eps)$, we obtain
\begin{equation}\label{e:}
\begin{aligned}
	A_v
  &=
    -\!\sum_{i,j=1,\,i\ne j}^3 \eta_i(\partial_j - \partial_i)f(\ueta)
	+\tfrac\eps\delta\sum_{i,j=1,\, i\ne j}^3\eta_i\eta_j\(\partial_{ii}-\partial_{ij}\)f(\ueta)
  +o(1)
    \\
   &=-\sum_{i=1}^3\(1-3\eta_i\)\partial_{i}f(\ueta)
   +2\sum_{i=1}^3\eta_i\(1-\eta_i\)\partial_{ii}f(\ueta)-2\sum_{i,j=1,\, i\ne j}^3\eta_i\eta_j\partial_{ij}f(\ueta) + o(1)
       \\
   &=
     -\sum_{i=1}^3(1-3\eta_i)\partial_{i}f(\underline{\eta})
	 + 2\sum_{i,j=1}^3 \eta_i(\delta_{ij}-\eta_j)\partial_{ij}f(\underline{\eta}) + o(1),
\end{aligned}
\end{equation}
and the claim follows by (weighted) summation over $v$ and Fubini's Theorem.
\end{proof}

Recall that, for $e\in\edge(T)$, we introduced $x_e,y_e\in \Tb\setminus T$, where $x_e$ is ``in the middle'' of
$e$ and $y_e$ is a leaf attached to $x_e$. Because $\mu$ is supported by $T$ and $\eta(x_e)$ is ordered
decreasingly, we always have $\eta_3(x_e)=0$ and $\eta_1(x_e)+\eta_2(x_2)=1$.
The following lemma is easily obtained by associating a branch point to its three adjacent edges.
\begin{lemma}[Matching lemma]\label{l:edgematch}
Let\/ $g\colon [0,1]^2\to \R$ be symmetric. Then
\begin{equation}\label{e:edgematch}
	\sum_{e\in\edge(T)} g(\eta_1(x_e),\eta_2(x_e))
		= \tfrac12 \sum_{v\in \br(T)} \sum_{i=1}^3 g\(\eta_i(v), 1-\eta_i(v)\) + \tfrac12Ng(1-\eps,\eps)
\end{equation}
\end{lemma}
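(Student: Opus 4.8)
The plan is to prove \eqref{e:edgematch} by a handshaking (double-counting) argument over vertex--edge incidences, using only the symmetry of $g$ together with the combinatorics of a binary tree, in which every edge has two endpoints while every branch point is incident to exactly three edges and every leaf to exactly one.

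First I would record the relevant mass identities. For an edge $e=\{a,b\}\in\edge(T)$, the midpoint $x_e$ separates $T$ into the two components $\Sub_a(b)$ (the side of $b$) and $\Sub_b(a)$ (the side of $a$); since $[a,b]=\{a,b\}$ carries no interior point, these two sets partition $T$, so $\mu(\Sub_a(b))+\mu(\Sub_b(a))=1$. Because $x_e$ itself carries no mass we have $\eta_3(x_e)=0$ and $\{\eta_1(x_e),\eta_2(x_e)\}=\{\mu(\Sub_a(b)),\mu(\Sub_b(a))\}$, whence, by symmetry of $g$, the summand $g(\eta_1(x_e),\eta_2(x_e))$ equals $g(\mu(\Sub_v(w)),\mu(\Sub_w(v)))$ for \emph{either} ordered choice of endpoints $\{v,w\}=\{a,b\}$.

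Next I would reorganise the left-hand side as a sum over incidences. For a vertex $v$ and an incident edge $e$ with other endpoint $w$, set $M(v,e):=\mu(\Sub_v(w))$, the mass lying behind $e$ as seen from $v$; then $1-M(v,e)=\mu(\Sub_w(v))$ by the partition identity above. Summing $g(M(v,e),1-M(v,e))$ over all incident pairs $(v,e)$, each edge $\{a,b\}$ is counted exactly twice (once from $a$, once from $b$), and by the previous paragraph both contributions equal $g(\eta_1(x_e),\eta_2(x_e))$; hence the incidence sum equals $2\sum_{e\in\edge(T)}g(\eta_1(x_e),\eta_2(x_e))$.

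Finally I would evaluate the same incidence sum grouped by vertex. A branch point $v\in\br(T)$ has $\deg(v)=3$, and its three incident edges lead precisely to the three components of $T\setminus\{v\}$, so as $e$ ranges over them $M(v,e)$ runs through $\eta_1(v),\eta_2(v),\eta_3(v)$, giving the contribution $\sum_{i=1}^3 g(\eta_i(v),1-\eta_i(v))$. A leaf $u\in\lf(T)$ has $\deg(u)=1$, and its unique incident edge satisfies $M(u,e)=\mu(T\setminus\{u\})=1-\eps$ since $\mu(\{u\})=\eps$; this contributes $g(1-\eps,\eps)$, and summing over the $N$ leaves yields $Ng(1-\eps,\eps)$. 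Equating the two evaluations of the incidence sum and dividing by $2$ gives \eqref{e:edgematch}. There is no genuine obstacle in this argument; the only points needing care are the bookkeeping identities $\eta_3(x_e)=0$ and $\mu(\{u\})=\eps$ for leaves, both immediate from the construction of $x_e$ and the definition of $\mathbb{T}_2^N$.
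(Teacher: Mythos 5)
Your proof is correct and is essentially the paper's argument: both are the same double count of vertex--edge incidences in a binary tree, where each edge is seen twice, branch points contribute the three terms $g(\eta_i(v),1-\eta_i(v))$, and the $N$ external edges account for the $\tfrac12 N g(1-\eps,\eps)$ term (the paper phrases it as ``each edge meets two branch points unless it is external,'' which is just your incidence sum with the leaf contributions moved to the other side). The bookkeeping identities you flag ($\eta_3(x_e)=0$, the two-component partition across an edge, and $\mu(\{u\})=\eps$ at leaves) are exactly the facts the paper uses as well.
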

\begin{proof}
	If $e\in \edge(T)$ is adjacent to $v\in \br(T)$, there is $i\in\{1,2,3\}$, $j\in\{1,2\}$ with
	$\eta_j(x_e) = \eta_i(v)$ and $\eta_{3-j}(x_e)=1-\eta_i(v)$.
	The edge $e$ is either adjacent to precisely two branch points, or it is an external edge, i.e.\
	adjacent to a leaf of $T$. In the latter case, we have
	$\(\eta_1(x_e), \eta_2(x_e)\)=(1-\eps,\eps)$, and there are $N$ external edges. Therefore,
	\begin{equation}
	2\!\sum_{e\in\edge(T)} g(\eta_1(x_e),\eta_2(x_e)) - Ng(1-\eps,\eps)
		= \sum_{v\in \br(T)} \sum_{i=1}^3 g\(\eta_i(v), 1-\eta_i(v)\)
	\end{equation}
	as claimed.
\end{proof}

\begin{proof}[Proof of Proposition~\ref{p:massgen}]
We assume w.l.o.g.\ that $f$ is symmetric (see Remark~\ref{r:Aldsym}).

\pstep{Step~1} Recall that for $z=(u,e)\in \lf(T)\times\edge(T)$, $\nu_z$ and $\ueta^z$ are the branch point
distribution and mass vector after the Aldous move $z$, respectively.
In the first step, we calculate the effect of the branch point ``created'' by the Aldous move $z$ due to the fact
that $\nu_z(\{x_e\})$ might be non-zero, whereas $\nu(\{x_e\})=0$ for all $e\in\edge(T)$. To this end, set
\begin{equation}\label{e:Cv}
	C_{x_e} := \sum_{z\in\lf(T)\times\edge(T)}\nu_z\{x_e\}f\(\uetaz(x_e)\).
\end{equation}
Recall that we order the entries of $\ueta$ decreasingly, so that $\eta_1(x_e)+\eta_2(x_e)=1$ and
$\eta_3(x_e)=0$.
For $(x,y,z)\in \Delta_3$, let
\begin{equation}
\begin{gathered}
	h_\eps(x,y,z):=\(1-\eps(2 + x\partial_1 + y\partial_2 - \partial_3)\)f(x,y,z),\\
	g_\eps(x,y):=6xy\cdot h_\eps(x,y,0).
\end{gathered}
\end{equation}
Then $g_\eps$ is a symmetric function.
Let $e\in\edge(T)$. For $z=(u,e')\in \lf(T)\times\edge(T)$, we have $\nu_z\{x_e\}\ne 0$ if and only if $e = e'$, and
hence, using symmetry of~$f$,
\begin{equation}\label{e:nuxe}
\begin{aligned}
	C_{x_e} &= \sum_{u\in\lf(T)} \nu_{(u,e)}\{x_e\}f\(\ueta^{(u,e)}(x_e)\)\\
	&=\sum_{i=1}^2 N\eta_i(x_e) \cdot 6\eps(\eta_i(x_e)-\eps)\eta_{3-i}(x_e)
		\cdot f\(\eta_i(x_e)-\eps,\, \eta_{3-i}(x_e),\, \eps\) \\
	&=  g_\eps\(\eta_1(x_e),\, \eta_2(x_e)\) + O(\eps^2),
\end{aligned}
\end{equation}
where we used, in the last equality, a first order Taylor expansion of $f$ and the identity $\eta_1(x_e) +
\eta_2(x_e)=1$.
For $v\in \br(\Tb)\setminus T$, there is a unique edge $e\in\edge(T)$ with $v=x_e$. Thus, using Lemma~\ref{l:edgematch},
\begin{equation}\label{e:newbr}
\begin{aligned}
	\sum_{v\in \br(\Tb)\setminus T} C_v 
	&= \sum_{e\in\edge(T)} g_\eps\(\eta_1(x_e),\, \eta_2(x_e)\) + O(\eps) \\
	&= \tfrac12\sum_{v\in \br(T)} \sum_{i=1}^3 g_\eps\(\eta_i(v),\, 1-\eta_i(v)\) +
		\tfrac12 N g_\eps(1-\eps,\eps) + O(\eps)
\end{aligned}
\end{equation}
Now we use that $\nu\{v\}=6\eta_1(v)\eta_2(v)\eta_3(v)$ for $v\in \br(T)$,
and hence for any permutation $(i,j,k) \in S_3$, we have
$g_\eps\(\eta_i(v),\, 1-\eta_i(v)\)=\nu\{v\}(\frac1{\eta_j(v)} +
\frac1{\eta_k(v)})h_\eps(\eta_i,1-\eta_i,0)$,
and obtain
\begin{equation}
	\sum_{v\in \br(\Tb)\setminus T} C_v
		= \tfrac12\sum_{v\in \br(T)} \nu\{v\} \sum_{i,j=1,\,i\ne j}^3 \frac{h_\eps\(\eta_i(v), 1-\eta_i(v), 0\)}{\eta_j(v)}
		+ 3f(1,0,0) + O(\eps).
\end{equation}

\pstep{Step~2}
In the second step, we calculate the effect of the change in $\nu\{v\}$ for the ``old'' branch points.
Fix $v\in\br(T)$. To make the calculation more readable, we abbreviate $\eta_i = \eta_i(v)$ as long as $v$ is
fixed.
We use Lemma~\ref{l:lambda} in the first transformation, and a first order Taylor expansion of $f$ in the second.
\begin{equation}\label{e:bvcalc}
\begin{aligned}
	B_v &:= \sum_{z\in \lf(T)\times\edge(T)} \(\nu_z\{v\} - \nu\{v\}\) f\(\uetaz(v)\) \\
	&= \sum_{(i,j,k)\in S_3} \frac{\eta_i}{\eps} \frac{\eta_j(1+3\delta) - \delta}{\delta}\cdot
		\nu\{v\}\Bigl(\frac{(\eta_i-\eps)(\eta_j+\eps)\eta_k}{\eta_i\eta_j\eta_k} - 1\Bigr)
		f(\eta_i-\eps,\eta_j+\eps, \eta_k)\\
	&= \nu\{v\}\!\sum_{(i,j,k)\in S_3}  \(\tfrac1\delta{\eta_j(1+3\delta)} - 1\)
		\frac{\eta_i-\eta_j-\eps}{\eta_j}
		\(f(\ueta) + \eps(\partial_j - \partial_i)f(\ueta) + O(\eps^2)\)\\
\end{aligned}
\end{equation}
Cancelling all terms which are anti-symmetric in $(i,j)$, we obtain
\begin{equation}\label{e:bvcont}
\begin{aligned}
	B_v &= \nu\{v\}\!\sum_{(i,j,k)\in S_3} \Bigl(
		\tfrac\eps\delta (\eta_i-\eta_j)(\partial_j - \partial_i)f(\ueta)
		- \( \frac{\eta_i-\eps}{\eta_j} - 1 + \tfrac\eps\delta \)f(\ueta)\\
	&\phantom{=\nu\{v\}\!\sum_{(i,j,k)\in S_3} \Bigl(}\, - \eps\frac{\eta_i}{\eta_j}(\partial_j - \partial_i)f(\ueta)
	+ O(\eps) \Bigr).
\end{aligned}
\end{equation}
Using $\eps/\delta=2+O(\eps)$, that $\sum_{(i,j,k)\in S_3} \frac{\eta_i}{\eta_j} =
\sum_{(i,j,k)\in S_3}(\tfrac1{2\eta_j} -\frac12)$, and
\begin{equation}\label{e:partre}
	\sum_{(i,j,k)\in S_3} \tfrac{\eta_i}{\eta_j} (\partial_j - \partial_i)f
		= \sum_{j=1}^3 \( \tfrac{1-\eta_j}{\eta_j}\partial_j - \tfrac{\eta_i}{\eta_j}\partial_i -
			\tfrac{\eta_k}{\eta_j}\partial_k \)f
		= \sum_{(i,j,k)\in S_3}  \tfrac1{2\eta_j}(\partial_j - \eta_i\partial_i - \eta_k
		\partial_k)f + O(1),
\end{equation}
we continue
\begin{equation}\label{e:Bv}
\begin{aligned}
B_v &= \nu\{v\}\!\sum_{(i,j,k)\in S_3} \Bigl( 4(\eta_j-\eta_i)\partial_if(\ueta)
	  - \( \tfrac1{2\eta_j} + \tfrac12  - \tfrac\eps{\eta_j}
		+ \tfrac\eps{2\eta_j} ( \partial_j - \eta_i\partial_i -
		\eta_k\partial_k) \)f(\ueta)
	  + O(\eps) \Bigr)\\
	&= 4\nu\{v\}\! \sum_{i=1}^3 (1-3\eta_i)\partial_if(\ueta)
	    - \nu\{v\}\! \sum_{(i,j,k)\in S_3} \frac{h_\eps(\eta_i, \eta_j, \eta_k)}{2\eta_k}
	    - \nu\{v\}\(3f(\ueta) + O(\eps)\).
\end{aligned}
\end{equation}

\pstep{Step 3}
Because $f$ is twice continuously differentiable, $\frac1z(2+x\partial_1+y\partial_2 -
\partial_3)\(f(x,y+z,0)-f(x,y,z)\)$ is bounded and hence
\begin{equation}
\begin{aligned}
	\tfrac1{z}\(h_\eps(x,y+z,0) - h_\eps(x,y,z)\) &= \tfrac1{z} \(f(x,y+z,0) - f(x,y,z)\) + O(\eps)\\
	&= \Theta_{3,2}f(x,y,z) + O(\eps).
\end{aligned}
\end{equation}
Therefore, using Fubini's Theorem and combining \eqref{e:newbr} with \eqref{e:Bv} yields
\begin{equation}\label{e:deltanu}
\begin{aligned}
\MoveEqLeft{\sum_{z\in \lf(T)\times\edge(T)}\; \sum_{v\in \br(\Tb)}  \(\nu_z\{v\} - \nu\{v\}\) f\(\uetaz(v)\)}\\
	&= \sum_{v\in \br(T)} B_v + \sum_{v\in \br(\Tb)\setminus T} C_v\\
	&= \sum_{v\in \br(T)}\!\! \nu\{v\} \Bigl( 4\sum_{i=1}^3 (1-3\eta_i)\partial_if\(\ueta(v)\)
	    + \tfrac12\!\!\!\sum_{i,j = 1,\,i\ne j}^3\!\!\! \Theta_{i,j}f\(\ueta(v)\)
	 + 3f(1,0,0) - 3f\(\ueta(v)\) \Bigr) \\
	 &\phantom{{}={}} + O(\eps).
	\end{aligned}
\end{equation}
Together with Lemma~\ref{l:deltaf} (and using symmetry of $f$), we have obtained for $\smallx\in \Tbin^N$
\begin{equation}\label{e:42}
\begin{aligned}
	\Omega_N\Phi^f(\smallx) &=
		\sum_{z\in\lf(T)\times\edge(T)}\; \sum_{v\in\br(\Tb)}
				\Bigl(\nu_z\{v\}f\(\ueta^z(v)\) - \nu\{v\}f\(\ueta(v)\)\Bigr) \\
	  &= \AldGenMass\Phi^f(\smallx) + o(1),
\end{aligned}
\end{equation}
which shows the claim of Proposition~\ref{p:massgen}.
\end{proof}

\bibliographystyle{alpha}
\bibliography{gpw}

\begin{thebibliography}{BRST02}

\bibitem[Ald93]{Aldous1993}
David Aldous.
\newblock The continuum random tree {III}.
\newblock {\em Ann. Probab.}, 21:248--289, 1993.

\bibitem[Ald94]{Aldous1994}
David Aldous.
\newblock Recursive self-similarity for random trees, random triangulations and
  {B}rownian excursion.
\newblock {\em Ann. Probab.}, 22(2):527--545, 1994.

\bibitem[Ald00]{MR1774749}
David Aldous.
\newblock Mixing time for a {M}arkov chain on cladograms.
\newblock {\em Combin. Probab. Comput.}, 9(3):191--204, 2000.

\bibitem[ALW17]{AthreyaLohrWinter2017}
Siva Athreya, Wolfgang L{\"o}hr, and Anita Winter.
\newblock Invariance principle for variable speed random walks on trees.
\newblock {\em Ann.\ Probab.}, 45(2):625--667, 2017.

\bibitem[AS01]{MR1841949}
Benjamin~L. Allen and Mike Steel.
\newblock Subtree transfer operations and their induced metrics on evolutionary
  trees.
\newblock {\em Ann. Comb.}, 5(1):1--15, 2001.

\bibitem[BHV01]{MR1867931}
Louis~J. Billera, Susan~P. Holmes, and Karen Vogtmann.
\newblock Geometry of the space of phylogenetic trees.
\newblock {\em Adv. in Appl. Math.}, 27(4):733--767, 2001.

\bibitem[BRST02]{MR1920237}
Oliver Bastert, Dan Rockmore, Peter~F. Stadler, and Gottfried Tinhofer.
\newblock Landscapes on spaces of trees.
\newblock {\em Appl. Math. Comput.}, 131(2-3):439--459, 2002.

\bibitem[DGP12]{DepperschmidtGrevenPfaffelhuber2012}
Andrej Depperschmidt, Andreas Greven, and Peter Pfaffelhuber.
\newblock Tree-valued {F}leming-{V}iot dynamics with selection.
\newblock {\em Ann.\ Appl.\ Probab.}, 22(6):2560--2615, 2012.

\bibitem[EGW17]{EvansGruebelWakolbinger2017}
Steven~N. Evans, Rudolf Gr\"ubel, and Anton Wakolbinger.
\newblock Doob-{M}artin boundary of {R\'e}my's tree growth chain.
\newblock {\em Ann.\ Probab.}, 45(1):225--277, 2017.

\bibitem[EK86]{EthierKurtz86}
Stewart~N. Ethier and Thomas~G. Kurtz.
\newblock {\em Markov {P}rocesses. {C}haracterization and {C}onvergence}.
\newblock John Wiley, New York, 1986.

\bibitem[EW06]{EvansWinter2006}
Steven~N. Evans and Anita Winter.
\newblock Subtree prune and re-graft: A reversible real-tree valued {M}arkov
  chain.
\newblock {\em Ann.\ Probab.}, 34(3):918--961, 2006.

\bibitem[Fel03]{Felsenstein2003}
Joseph Felsenstein.
\newblock {\em Inferring Phylogenies}.
\newblock Sinauer, 2003.

\bibitem[For]{Forman2018}
Noah Forman.
\newblock Mass structure of weighted real trees.
\newblock arXiv:1801.02700v1.

\bibitem[FPRWa]{FormanPalRizzoloWinkel2018c}
Noah Forman, Soumik Pal, Douglas Rizzolo, and Matthias Winkel.
\newblock Aldous diffusion {I}: a projective system of continuum $k$-tree
  evolutions.
\newblock arXiv:1809.07756.

\bibitem[FPRWb]{FormanPalRizzoloWinkel2016}
Noah Forman, Soumik Pal, Douglas Rizzolo, and Matthias Winkel.
\newblock Diffusions on a space of interval partitions with
  {P}oisson-{D}irichlet stationary distributions.
\newblock arXiv:1609.06707v2.

\bibitem[FPRWc]{FormanPalRizzoloWinkel2018b}
Noah Forman, Soumik Pal, Douglas Rizzolo, and Matthias Winkel.
\newblock Interval partition evolutions with emigration related to the {A}ldous
  diffusion.
\newblock arXiv:1804.01205v1.

\bibitem[FPRWd]{FormanPalRizzoloWinkel2018a}
Noah Forman, Soumik Pal, Douglas Rizzolo, and Matthias Winkel.
\newblock Projections of the {A}ldous chain on binary trees: Intertwining and
  consistency.
\newblock arXiv:1802.00862v1.

\bibitem[GPW09]{GrevenPfaffelhuberWinter2009}
Andreas Greven, Peter Pfaffelhuber, and Anita Winter.
\newblock Convergence in distribution of random metric measure spaces
  ({$\Lambda$}-coalescent measure trees).
\newblock {\em Probab.\ Theory Related Fields}, 145:285--322, 2009.

\bibitem[GPW13]{GrevenPfaffelhuberWinter2013}
Andreas Greven, Peter Pfaffelhuber, and Anita Winter.
\newblock Tree-valued resampling dynamics: {M}artingale problems and
  applications.
\newblock {\em Probab.\ Theory Related Fields}, 155(3--4):789--838, 2013.

\bibitem[Kal02]{Kallenberg2002}
Olav Kallenberg.
\newblock {\em Foundations of Modern Probability}.
\newblock Springer, 2002.

\bibitem[KL15]{KliemLoehr2015}
Sandra Kliem and Wolfgang L{\"o}hr.
\newblock Existence of mark functions in marked metric measure spaces.
\newblock {\em Electron.\ J.\ Probab.}, 20(73):1--24, 2015.

\bibitem[L{\"o}h13]{Loehr2013}
Wolfgang L{\"o}hr.
\newblock Equivalence of {Gromov-Prohorov-} and {Gromov's}
  {$\underline\Box_\lambda$}-metric on the space of metric measure spaces.
\newblock {\em Electron.\ Commun.\ Probab.}, 18(17):1--10, 2013.

\bibitem[LVW15]{LoehrVoisinWinter2015}
Wolfgang L{\"o}hr, Guillaume Voisin, and Anita Winter.
\newblock Convergence of bi-measure {$\mathbb R$-trees} and the pruning
  process.
\newblock {\em Ann.\ Inst.\ H.\ Poincar\'e Probab.\ Statist.},
  51(4):1342--1368, 2015.

\bibitem[LW]{LoehrWinter}
Wolfgang L\"ohr and Anita Winter.
\newblock Spaces of algebraic measure trees and triangulations of the circle.
\newblock arXiv:1811.11734v2.

\bibitem[Nus]{Nussbaumer}
Josu{\'e} Nussbaumer.
\newblock The {K}ingman coalescent: old and new aspects.
\newblock
  \url{http://www-stud.uni-due.de/~snjonuss/master_thesis_kingman_coalescent.pdf}.
\newblock Master thesis.

\bibitem[Pal]{Pal2011}
Soumik Pal.
\newblock On the {A}ldous diffusion on continuum tree, {I}.
\newblock arXiv:1104.4186v1.

\bibitem[Sch02]{MR1871950}
Jason Schweinsberg.
\newblock An {$O(n\sp 2)$} bound for the relaxation time of a {M}arkov chain on
  cladograms.
\newblock {\em Random Structures Algorithms}, 20(1):59--70, 2002.

\end{thebibliography}

\end{document}